\newtheorem{thm}{Theorem}[section]
\newtheorem{cor}[thm]{Corollary}
\newtheorem{prop}[thm]{Proposition}
\newtheorem{rem}[thm]{Remark}
\newcommand{\R}{\mathbb{R}}
\newcommand{\C}{\mathbb C}
\newcommand{\Z}{\mathbb{Z}}
\newcommand{\N}{\mathbb{N}}
\newcommand{\T}{\mathbb{\partial\mathbb{D}}}
\newcommand{\D}{\mathbb{D}}
\newcommand{\abs}[1]{\left|#1\right|}
\newcommand{\set}[1]{\left\{#1\right\}}
\begin{document}
\title{
Szeg\H{o} mapping and Hermite--Pad\'{e} polynomials for multiple orthogonality on the unit circle
}
\date{\today}
\author{Rostyslav Kozhan$^1$}
\email{kozhan@math.uu.se}
\author{Marcus Vaktnäs$^{1}$}
\email{marcus.vaktnas@math.uu.se}
\address{$^{1}$Department of Mathematics, Uppsala University, S-751 06 Uppsala, Sweden}
\begin{abstract}
We investigate generalized Laurent multiple orthogonal polynomials on the unit circle satisfying simultaneous orthogonality conditions with respect to $r$ probability measures or linear functionals on the unit circle. We show that these polynomials can be characterized as solutions of a general two-point Hermite--Padé approximation problem.

We derive Szeg\H{o}-type recurrence relations, establish compatibility conditions for the associated recurrence coefficients, and obtain Christoffel--Darboux formulas as well as Heine-type determinantal representations.

Furthermore, by extending the Szeg\H{o} mapping and the Geronimus relations, we relate these Laurent multiple orthogonal polynomials to multiple orthogonal polynomials on the real line, thereby making explicit the connection between multiple orthogonality on the unit circle and on the real line.



\end{abstract}
\maketitle

\section{Introduction}

While multiple orthogonality on the real line has been extensively developed over the past decades, its analogue on the unit circle, introduced by M\'{i}nguez and Van Assche~\cite{MOPUC1}, still remains a relatively new area of study. Nevertheless, recent work~\cite{MOPUC2,KVMOPUC,KVMLOPUC1,KNik,HueMan} indicates that multiple orthogonality on the unit circle exhibits a rich and intricate structure that warrants a unified and systematic treatment.

In this paper, we introduce a generalized notion of Laurent multiple orthogonal polynomials, which provides a natural framework for the generalized Hermite--Pad\'{e} problem and for connecting with the Szeg\H{o} mapping and Geronimus relations, linking the theory to multiple orthogonal polynomials on the real line. Before presenting our main results, we summarize some foundational aspects of the theory that will be needed throughout the paper. In particular, we summarize key facts about orthogonal polynomials on the real line (Section~\ref{ss:intro1}) and on the unit circle  (Section~\ref{ss:intro2}), their connections to Pad\'e and Hermite--Pad\'e approximation, and the Szeg\H{o} mapping  (Section~\ref{ss:intro3}). A concise overview of our main results is provided in Section~\ref{ss:main}.






\subsection{Orthogonality on the real line and Hermite--Pad\'{e} approximation}\label{ss:intro1}
\hfill\\

We start by reviewing the basic theory of orthogonal and multiple orthogonal polynomials on the real line, together with the associated Pad\'{e} and Hermite--Pad\'{e} approximation problems. 
Let $\mu$ be a probability measure supported on the real line $\R$ with all  moments 
$$
c_j = \int_\R x^j d\mu(x), \qquad j\in\N=\{0,1,2,\ldots\},
$$
finite. The Cauchy--Stieltjes transform of $\mu$ is the function, analytic on $\C\setminus\R$, given by
\begin{equation}
    \label{eq:m}
m(z) = \int_{\R} \frac{d\mu(x)}{x-z} = -\sum_{j=0}^\infty c_j z^{-j-1}, \qquad z\to\infty. 
\end{equation}
The monic (i.e., with leading term 1) orthogonal polynomials 
of $\mu$ are then defined by requiring $\deg{P_n} = n$ and
\begin{equation}\label{eq:oprl1}
    \int_\R P_n(x)x^{k} d\mu(x) = 0, \qquad k = 0,\dots,n-1.
\end{equation}
They satisfy the three-term recurrence relation
\begin{equation}
    x P_{n}(x) = P_{n+1}(x) + b_n P_n(x) + a_n P_{n-1}(x),
\end{equation}
where $a_n>0$, $n\ge 1$, ($a_0:=0$) and $b_n\in\R$ are called the Jacobi coefficients.
See~\cite{SzegoBook,SimonL2} for more details.

One of the classical settings in which orthogonal polynomials naturally appear is Pad\'{e} approximation, see, e.g.,~\cite{Baker} for a comprehensive treatment.
Given a function $m(z)$ as in~\eqref{eq:m}, the Pad\'{e} problem seeks polynomials $P_n$ and $Q_n$ of degrees at most $n$ and $n-1$, respectively, such that
\begin{equation}\label{eq:padeR}
P_n(z) m(z) +Q_n(z) = \mathcal{O}(z^{-n-1}), \qquad z \to \infty.
\end{equation}
It can be shown that all the 
solutions $P_n$ of 
this problem are exactly the orthogonal polynomials defined by~\eqref{eq:oprl1}.

The Hermite--Pad\'{e} approximation problem concerns simultaneous approximation of Cauchy--Stieltjes transforms, generalizing~\eqref{eq:padeR}. 
Let $m_1(z), \ldots, m_r(z)$ be the Cauchy--Stieltjes transforms of $r$ measures $\mu_1, \ldots, \mu_r$ on $\R$. For a multi-index $\bm{n} = (n_1, \ldots, n_r) \in \N^r$, we look for polynomials $P_{\bm{n}}$ and $Q_{\bm{n},j}$  of degree $\le |\bm{n}|:=n_1+\ldots+n_r$ and $\le |\bm{n}|-1$ satisfying
\begin{equation}\label{eq:hermitePadeR}
P_{\bm{n}}(z) m_j(z) + Q_{\bm{n},j}(z) = \mathcal{O}(z^{-n_j - 1}),
\qquad z \to \infty, \quad j = 1,\ldots,r.
\end{equation}

One of the fundamental results in this theory is that a polynomial $P_{\bm{n}}$ 
solves~\eqref{eq:hermitePadeR} if and only if it satisfies simultaneous orthogonality conditions 
\begin{equation}\label{eq:MOPRL}
    \int_\R P_{\bm{n}}(x) x^{k}  d\mu_j(x) = 0,
    \qquad k = 0,\ldots,n_j-1, \quad j = 1,\ldots,r,
\end{equation}
with respect to the system of measures 
$\bm{\mu}=(\mu_1,\ldots,\mu_r)$. Such polynomials are called (type II) multiple orthogonal polynomials of $\bm\mu$.
This is a very well developed area of research, see, e.g.,~\cites{Aptekarev, Ismail, Applications,Nikishin} and references therein.

\subsection{Orthogonality on the unit circle and two-point Hermite--Pad\'{e} approximation}\label{ss:intro2}
\hfill\\


Let us now introduce the parallel theory on the complex unit circle~\cites{OPUC1,OPUC2,GeronimusBook,SzegoBook}. We start with  a probability measure supported on  $\T=\{z\in\C: |z|=1\}$, and denote its moments 
$$
c_j = \int_\T w^{-j} d\mu(w), \qquad j\in\Z.
$$
The Carath\'{e}odory function of $\mu$ is defined to be the analytic on $\C\setminus\T$ function $F(z)$ given by
\begin{equation}\label{eq:F}
    F(z) = \int_\T \frac{w+z}{w-z} d\mu(w)
    =
    \begin{cases}
         c_{0} + 2\sum_{k = 1}^{\infty}c_{k}z^k, & z\to0,
         \\
        -c_{0} - 2\sum_{k = 1}^{\infty}c_{-k}z^{-k}, & z\to\infty.
    \end{cases}
\end{equation}
The monic orthogonal polynomials $(\Phi_n(z))_{n = 0}^\infty$ of $\mu$ are defined by requiring $\deg{\Phi_n} = n$ and
\begin{equation}\label{eq:opuc1}
    \int_\T \Phi_n(w)w^{-k} d\mu(w) = 0, \qquad k = 0,\dots,n-1.
\end{equation}
They satisfy the Szeg\H{o} recurrence relations given by
\begin{align}
\label{eq:scalar szego 1}
\Phi_{n+1}(z) & = z\Phi_n(z) + {\alpha}_{n+1}\Phi^*_{n}(z),
\\
\label{eq:scalar szego 2}
\Phi^*_{n+1}(z) & =  \Phi^*_n(z) + \bar\alpha_{n+1}z\Phi_{n}(z),
\end{align}
where the Szeg\H{o} dual polynomials $\Phi_n^*(z) = z^n\overline{\Phi_n(1/\bar z)}$ can be defined via $\Phi_n^*(0)=1$ and
\begin{equation}\label{eq:Phi^*}
    \int_\T \Phi^*_n(w)w^{-k} d\mu(w) = 0, \qquad k = 1,\dots,n.
\end{equation}
The recurrence coefficients $\alpha_n$, $n\ge 1$, belong to open unit disc $\D = \set{z \in \C : \abs{z} < 1}$, and are known as the Verblunsky coefficients of $\mu$ (also referred to in the literature as Schur, Szeg\H{o}, or Geronimus coefficients).\footnote{We warn the reader that here we are choosing $\alpha_n = \Phi_n(0)$ instead of the nowadays more commonly used $\alpha_n = \overline{\Phi_{n+1}(0)}$ 
(see the historical discussion in~\cite[p.10]{OPUC1}).}

The associated Pad\'{e} approximation problem takes the following {\it two-point} form, see, e.g.~\cite{Baker,Bultheel,JNT,PeherstorferSteinbauer} and references therein.
Given a function $F(z)$ as in~\eqref{eq:F}, we seek polynomials $\Phi_n$ and $\Psi_n$ of degrees at most $n$, such that
\begin{alignat}{3}
\label{eq:padeT1}
    & \Phi_{{n}}(z)F(z) + \Psi_{{n}}(z) = \mathcal{O}(z^{n}),  \qquad&& z \rightarrow 0, 
    \\
    \label{eq:padeT2}
    & \Phi_{{n}}(z)F(z) + \Psi_{{n}}(z) = \mathcal{O}(z^{-1}), \qquad && z \rightarrow \infty.
\end{alignat}
All polynomial solutions $\Phi_n$ of~\eqref{eq:padeT1}--\eqref{eq:padeT2} 
are constant multiples of the orthogonal polynomials defined by~\eqref{eq:opuc1}.

In~\cite{MOPUC1}, M\'{i}nguez and Van Assche considered the Hermite--Pad\'{e} approximation problem of finding 
polynomials $\Phi_{\bm{n}}(z),\Psi_{\bm{n},1}(z),\ldots,\Psi_{\bm{n},r}(z)$ of degree $\le|\bm{n}|$ that satisfy
\begin{alignat}{3}
    \label{eq:hermitePadeT1}
    & \Phi_{\bm{n}}(z)F_j(z) + \Psi_{\bm{n},j}(z) = \mathcal{O}(z^{n_j}), \qquad  && z \rightarrow 0, 
    \\
    \label{eq:hermitePadeT2}
    & \Phi_{\bm{n}}(z)F_j(z) + \Psi_{\bm{n},j}(z) = \mathcal{O}(z^{-1}), \qquad  &&z \rightarrow \infty,
\end{alignat}
for $r$ Carath\'{e}odory functions~\eqref{eq:F} $F_1,\ldots,F_r$ associated with measures $\mu_1,\ldots,\mu_r$  on $\T$, and showed that $\Phi_{\bm{n}}(z)$ 
solves~\eqref{eq:hermitePadeT1}--\eqref{eq:hermitePadeT2} if and only if it
fulfills the simultaneous orthogonality relations
\begin{equation}\label{eq:MOPUC}
    \int_\T \Phi_{\bm{n}}(w) w^{-k}  d\mu_j(w) = 0,
    \qquad k = 0,\ldots,n_j-1, \quad j = 1,\ldots,r.
\end{equation}
The polynomials $\Phi_{\bm{n}}(z)$ are therefore  called (type II) multiple orthogonal polynomials on the unit circle. See~\cites{MOPUC1,MOPUC2,KVMOPUC} for further results on the properties of $\Phi_{\bm{n}}$. 

\subsection{Szeg\H{o} mapping and Geronimus relations}\label{ss:intro3}
\hfill\\

Finally, we recall the Szeg\H{o} mapping and the Geronimus relations, which provide a fundamental connection between the theories of orthogonal polynomials on the real line and on the unit circle. Given any probability measure $\gamma$ supported on the real interval $[-2,2]$, one can define a  probability measure $\mu=\operatorname{Sz}(\gamma)$ to be the probability measure on the unit circle $\T$, that is invariant under the reflection $e^{i\theta}\mapsto e^{-i\theta}$ and satisfies
\begin{equation}\label{eq:szego mapIntro}
    \int_{\T} g(2\cos\theta) d\mu(e^{i\theta}) = \int_{-2}^2 g(x) d\gamma(x),
\end{equation}
for all measurable functions $g$ on $[-2,2]$. Conversely,  any probability measure $\mu$ on $\T$ that is invariant under $e^{i\theta}\mapsto e^{-i\theta}$ arises as   $\mu=\operatorname{Sz}(\gamma)$ for a unique probability measure $\gamma$ on $[-2,2]$  determined by~\eqref{eq:szego mapIntro}. Szeg\H{o} showed~\cite{Szego} that the orthogonal polynomials $P_n$ on the real line with respect to $\gamma$ and the orthogonal polynomials $\Phi_n$ on the unit circle with respect to $\mu$ are connected by the identities
\begin{equation}
     P_n(z+z^{-1}) = \frac{1}{1+\alpha_{2n}} \big( z^{-n} \Phi_{2n}(z) + z^{n} \Phi_{2n}(1/z) \big) 
\end{equation}
Later, Geronimus~\cite{GeronimusRelations,GeronimusBook} derived explicit formulas relating the recurrence coefficients of $\gamma$ and  of $\mu$:
\begin{alignat}{3}
\label{eq:GerIntro1}
    a^2_n &  = (1+\alpha_{2n-2})(1-\alpha_{2n-1}^2)(1-\alpha_{2n}),
    \qquad && n\ge 1,
    \\
\label{eq:GerIntro2}
    b_n & =(1-\alpha_{2n})\alpha_{2n-1} - (1+\alpha_{2n})\alpha_{2n+1}, \qquad && n\ge 0,
\end{alignat}
with the convention $\alpha_{-1} = 0$. 

\subsection{Summary of main results}\label{ss:main}
\hfill\\

In this paper we introduce the Laurent multiple orthogonal polynomials defined by
\begin{align}
\label{eq:generalized hermite pade orthogonality intro}
    & \int_\T \Phi_{\bm{n};\bm{m}}(w)w^{-k}d\mu_j(w) = 0, \qquad k = -m_j,\dots,n_j-1, \qquad j = 1,\dots,r,
    \\
    \label{eq:IntroSpanNEW}
    & \Phi_{\bm{n};\bm{m}} \in \operatorname{span}\set{z^{-\abs{\bm{m}}},z^{-\abs{\bm{m}}+1},\dots,{z^{\abs{\bm{n}}}}}.
\end{align}
Here $\bm{n}$ and $\bm{m}$ are two arbitrary multi-indices in $\N^r$. In particular, if $\bm{m}=\bm{0}$, then $\Phi_{\bm{n};\bm{0}}$ are the 
multiple orthogonal polynomials $\Phi_{\bm{n}}$ of M\'{i}nguez--Van Assche~\cite{MOPUC1}.

The motivation for studying this class of polynomials is two-fold. First, we show (see Sections~\ref{ss:general type II hermite pade}--\ref{ss:HPI}) that a Laurent polynomial $\Phi_{\bm{n};\bm{m}}(z)$ satisfying~\eqref{eq:IntroSpanNEW} fulfills the orthogonality conditions~\eqref{eq:generalized hermite pade orthogonality intro} if and only if it solves the two-point Hermite--Pad\'{e} approximation problem 
\begin{alignat}{3}
    \label{eq:intro generalized two point hermite pade eq 1}
    & \Phi_{\bm{n};\bm{m}}(z)F_j(z) + \Psi_{\bm{n};\bm{m},j}(z) = \mathcal{O}(z^{n_j}),  \qquad &&z \rightarrow 0, \\
    \label{eq:intro generalized two point hermite pade eq 2}
    & \Phi_{\bm{n};\bm{m}}(z)F_j(z) + \Psi_{\bm{n};\bm{m},j}(z) = \mathcal{O}(z^{-m_j-1}), \qquad &&z \rightarrow \infty,
\end{alignat}
for suitable Laurent polynomials $\Psi_{\bm{n};\bm{m},j}$. Here $F_j$ are the Carath\'{e}odory functions associated with the measures $\mu_1,\ldots,\mu_r$.
In the scalar case $r=1$, one readily verifies that $\Phi_{n;m}(z) = z^{-m} \Phi_{n+m}(z)$.
If $r\ge 2$, such a trivial relationship with $\Phi_{\bm{n}}$ in~\eqref{eq:MOPUC}, of course, no longer holds. 

Our second motivation for introducing the Laurent framework~\eqref{eq:generalized hermite pade orthogonality intro} is that it naturally accommodates the Szeg\H{o} mapping and the Geronimus relations. This allows us to connect our results with the theory of multiple orthogonal polynomials on the real line, a connection that is not available in the multiple polynomial setting~\eqref{eq:MOPUC} considered in~\cite{MOPUC1,MOPUC2,KVMOPUC}. These connections are established in Theorems~\ref{thm:szego mapping type II} and~\ref{thm:Geronimus} in Section~\ref{ss:SzegoMap}.

To achieve this goal, we first develop the underlying framework, introducing the relevant notions and establishing several structural results that will be used throughout the paper. In Section~\ref{ss:SzegoNEW}, we show that the polynomials $\Phi_{\bm{n};\bm{m}}$ satisfy a family of nearest-neighbour recurrence relations, which can be viewed as a natural extension of the classical Szeg\H{o} recurrences for orthogonal polynomials on the unit circle. In Section~\ref{ss:CC} we derive a system of nearest-neighbour compatibility relations satisfied by the recurrence coefficients and the orthogonal polynomials. Section~\ref{ss:extra} is devoted to a collection of identities that follow from these relations; although somewhat technical in nature, they provide the main tools required for the proof of the Geronimus relations. In Section~\ref{ss:Heine} we establish analogues of the Heine determinantal formulas for the Laurent polynomials $\Phi_{\bm{n};\bm{m}}$, as well as for all associated recurrence coefficients. Section~\ref{ss:CD} contains the Christoffel--Darboux formula.
The results of Sections~\ref{ss:SzegoNEW}, \ref{ss:CC}, and~\ref{ss:CD} follow the same pattern as for the multiple orthogonal polynomials $\Phi_{\bm{n}}$, with the main distinction that $\Phi_{\bm{n};\bm{m}}$ involves two independently varying multi-indices. 
The corresponding proofs follow closely arguments developed in our~\cite{KVMOPUC}, with appropriate modifications to accommodate the present more general framework. 

We also note that in this work we adopt the broader setting, where orthogonality with respect to measures is replaced by orthogonality with respect to moment linear functionals. This generalization does not introduce any additional technical complications, and all statements and proofs extend naturally to it.  Readers who prefer the classical setting may simply interpret each functional as integration with respect to a measure. 


Laurent orthogonal polynomials have also been considered in several very recent works~\cites{KVMLOPUC1,KNik,HueMan}. 
The polynomials in~\cite{HueMan} correspond to the mixed multiple orthogonality setting. While their work studies recurrence relations, Christoffel--Darboux formula, and spectral transformations, only for very specific multi-indices do their polynomials coincide with ours, because their focus is restricted to step-line indices. Furthermore, because they do not introduce the II${}^*$ and I${}^*$ families, their formulas take a substantially different form.
The polynomials $\varphi_{2\bm{n}}$ in~\cite{KVMLOPUC1,KNik} coincide with the Laurent polynomials $\Phi_{\bm{n};\bm{n}}$ studied here, and normality (existence and uniqueness) for $\varphi_{\bm{n}}$ was established for broad classes of measures, including Angelesco, AT, and Nikishin systems on the unit circle. Notice that this establishes existence and uniqueness of $\Phi_{\bm{n};\bm{n}}$ and of the Hermite--Pad\'{e} approximants for ~\eqref{eq:intro generalized two point hermite pade eq 1}--\eqref{eq:intro generalized two point hermite pade eq 2} in the case $\bm{n}=\bm{m}$. The extent to which these results generalize to arbitrary indices $(\bm{n};\bm{m})$ is currently an interesting open problem. Another natural question concerns the location of zeros, which is closely related to the issue of normality, as indicated by the ideas in~\cite{KVInterlacing}. We leave these questions for future work.

\subsubsection*{Acknowledgements}
R.K. is grateful to  Maxim Derevyagin, Brian Simanek, and Sergey Denisov for useful discussions regarding Szeg\H{o}'s mapping.

\section{Orthogonality Relations and Normality}\label{ss:normality}

Let $r$ sequences $(c_{k,j})_{k\in\Z}$, $j=1,\ldots,r$, of arbitrary complex numbers be given; we refer to them as moments. We define the corresponding moment functionals $L_j$ as the linear maps on the space of Laurent polynomials with complex coefficients, uniquely determined by
\begin{equation}\label{eq:moments linear functional}
         L_j[w^{-k}] = c_{k,j} , \qquad k\in\Z, \qquad j = 1,\dots,r. 
    \end{equation}

Of special importance is the case when a functional $L_j$ is induced by a probability measure on the unit circle $\T$
\begin{equation}\label{eq:functionals given by measures}
        L_j[w^{-k}] = \int_{\T} w^{-k} d\mu_j(w), \qquad 
        k \in \Z. 
\end{equation}


In what follows let $\Z$ be the set of integers, $\N$ be the set of non-negative integers. Given two vectors $\bm{u},\bm{v}\in \Z^r$, we write $\bm{u}\ge \bm{v}$ if $u_j\ge v_j$ for all $j=1,\ldots,r$.
Finally, we denote $|\bm{v}| = v_1+\ldots + v_r$. Note that we do {\it not} take the absolute value of $v_j$, even though $v_j$ may be negative; this is done on purpose. In particular, $\bm{u}\ge \bm{v}$ implies $|\bm{u}|\ge |\bm{v}|$.

Given two $\Z^r$ indices $\bm{n}$, $\bm{m}$, let us denote 
\begin{equation}
     \mathfrak{C}_{2r}
     =
     \left\{
     (\bm{n};\bm{m}) \in\Z^r\times\Z^r : 
     n_j+m_j\ge 0 \mbox{ for all } j
     \right\}
\end{equation}
for the remainder of the text.
    
For $(\bm{n};\bm{m})\in\mathfrak{C}_{2r}$, define the $(|\bm{n}|+|\bm{m}|)\times (|\bm{n}|+|\bm{m}|)$ matrix
\begin{equation}\label{eq:T}
    T_{\bm{n};\bm{m}} =\left( \begin{array}{ccc}
        c_{\abs{\bm{m}}-m_1,1} & \cdots & c_{-\abs{\bm{n}}-m_1+1,1} \\
        c_{\abs{\bm{m}}-m_1+1,1} & \cdots & c_{-\abs{\bm{n}}-m_1+2,1} \\
        \vdots & \ddots & \vdots \\
        c_{\abs{\bm{m}}+n_1-1,1} & \cdots & c_{-\abs{\bm{n}}+n_1,1} 
        \\[0.2em]
        \hdashline[0.5pt/2pt]
        \\[-1.2em]
        & \vdots \\[0.2em]
        \hdashline[0.5pt/2pt]
        \\[-1.0em]
        c_{\abs{\bm{m}}-m_r,r} & \cdots & c_{-\abs{\bm{n}}-m_r+1,r} \\
        c_{\abs{\bm{m}}-m_r+1,r} & \cdots & c_{-\abs{\bm{n}}-m_r+2,r} \\
        \vdots & \ddots & \vdots \\
        c_{\abs{\bm{m}}+n_r-1,r} & \cdots & c_{-\abs{\bm{n}}+n_r,r}
    \end{array}\right). 
\end{equation}
If $n_j=-m_j$ for some $j$, then the corresponding $j$-th block in the above matrix is empty. Furthermore, if $\bm{n}=-\bm{m}$, then we formally take $T_{\bm{n};-\bm{n}} = 1$.

We say that $(\bm{n};\bm{m})\in \mathfrak{C}_{2r}$ 
is normal if $\det T_{\bm{n};\bm{m}} \ne 0$. It should not be surprising that this notion is related to the uniqueness of appropriately normalized Laurent orthogonal polynomials, as described next.

\begin{prop}\label{prop:normality}
Let $\bm{L} = (L_1,\dots,L_r)$ be a system of linear functionals and $(\bm{n};\bm{m})\in \mathfrak{C}_{2r}$, with $\bm{n}\ne -\bm{m}$. 
\begin{itemize}
    \item[{(i)}] $\det T_{\bm{n};\bm{m}} \ne 0$ if and only if there exists a unique Laurent polynomial $\Phi_{\bm{n};\bm{m}}$ of the form
    \begin{equation}\label{eq:moprlIIspan}
    \Phi_{\bm{n};\bm{m}}(z) = z^{\abs{\bm{n}}} + \ldots + \alpha_{\bm{n};\bm{m}}z^{-\abs{\bm{m}}},
    \end{equation}  
    that satisfies 
    \begin{equation}\label{eq:moprlII}
    L_j[\Phi_{\bm{n};\bm{m}}(w)w^{-k}] = 0, 
     \quad -m_j \le k \le  n_j-1, \quad 1\le j \le r.
    \end{equation}
\item[{(ii)}] $\det T_{\bm{n};\bm{m}} \ne 0$ if and only if there exists a unique Laurent polynomial $\Phi^*_{\bm{n};\bm{m}}$ of the form
    \begin{equation}\label{eq:moprlII*span}
    \Phi_{\bm{n};\bm{m}}^*(z) = \beta_{\bm{n};\bm{m}}z^{\abs{\bm{n}}} + \ldots + z^{-\abs{\bm{m}}}
\end{equation} 
    that satisfies 
    \begin{equation}\label{eq:moprlII*}
        L_j[\Phi_{\bm{n};\bm{m}}^*(w)w^{-k}] = 0, 
        \quad -m_j+1\le k \le n_j, \quad 1\le j \le r.
\end{equation}
     \item[{(iii)}] $\det T_{\bm{n};\bm{m}} \ne 0$ if and only if there is a unique vector $\bm{\Xi}_{\bm{n};\bm{m}} = (\Xi_{\bm{n};\bm{m},1},\dots,\Xi_{\bm{n};\bm{m},r})$ of Laurent polynomials of the form
    \begin{equation}\label{eq:moprlIspan}
    \Xi_{\bm{n};\bm{m},j}(z) \in \operatorname{span}\big\{z^k\big\}_{k = -n_j}^{m_j-1},
    \end{equation}
    that satisfies 
    \begin{subnumcases}{\sum_{j = 1}^r L_j[\Xi_{\bm{n};\bm{m},j}(w)w^{-k}] =}  
            \label{eq:moprlI}
            0, & 
            $-\abs{\bm{n}}+1\le k \le \abs{\bm{m}}-1$,
            \\
            \label{eq:moprlInorm}
            1, & $k  =-\abs{\bm{n}}$.
    \end{subnumcases}
    \item[{(iv)}] $\det T_{\bm{n};\bm{m}} \ne 0$ if and only if there is a unique vector $\bm{\Xi}^*_{\bm{n};\bm{m}} = (\Xi^*_{\bm{n};\bm{m},1},\dots,\Xi^*_{\bm{n};\bm{m},r})$ of Laurent polynomials of the form
    \begin{equation}\label{eq:moprlI*span}
    \Xi^*_{\bm{n};\bm{m},j}(z) \in \operatorname{span}\big\{z^k\big\}_{k = -n_j+1}^{m_j},
    \end{equation}
    that satisfies 
    \begin{subnumcases}{\sum_{j = 1}^r L_j[\Xi^*_{\bm{n};\bm{m},j}(w)w^{-k}] =}         
            \label{eq:moprlI*}
            0, & 
            $-\abs{\bm{n}}+1\le k \le \abs{\bm{m}}-1$,
            \\
            \label{eq:moprlI*norm}
            1, & $k  =\abs{\bm{m}}$.
    \end{subnumcases}
\end{itemize}
\end{prop}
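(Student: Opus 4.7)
My plan is to reduce each of the four parts to a square linear system in the unknown coefficients of the Laurent polynomials involved, identify the coefficient matrix (up to transposition and index relabeling) with $T_{\bm{n};\bm{m}}$, and then invoke the standard linear-algebra fact that a square linear system admits a unique solution if and only if the determinant of its matrix is non-zero.

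For part (i), I would write $\Phi_{\bm{n};\bm{m}}(z) = z^{|\bm{n}|} + \sum_{l=-|\bm{m}|}^{|\bm{n}|-1} a_l \, z^l$, so that the unknowns are the $|\bm{n}|+|\bm{m}|$ coefficients $a_l$. Using $L_j[w^{l-k}] = c_{k-l,j}$, each orthogonality condition in~\eqref{eq:moprlII} becomes
\[
\sum_{l=-|\bm{m}|}^{|\bm{n}|-1} a_l \, c_{k-l, j} \;=\; - c_{k-|\bm{n}|, j}, \qquad -m_j \le k \le n_j - 1, \quad 1 \le j \le r,
\]
which is $|\bm{n}|+|\bm{m}|$ equations in as many unknowns. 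A direct entry-by-entry comparison with~\eqref{eq:T} confirms that the coefficient matrix is precisely $T_{\bm{n};\bm{m}}$. Part (ii) follows the same pattern: writing $\Phi^*_{\bm{n};\bm{m}}(z) = z^{-|\bm{m}|} + \sum_{l=-|\bm{m}|+1}^{|\bm{n}|} b_l \, z^l$ and applying~\eqref{eq:moprlII*}, both $k$ and $l$ are shifted by one relative to (i), but since the entries $c_{k-l, j}$ depend only on the difference $k-l$, the coefficient matrix is again $T_{\bm{n};\bm{m}}$.

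For parts (iii) and (iv), I would substitute the expansions $\Xi_{\bm{n};\bm{m}, j}(z) = \sum_{l=-n_j}^{m_j-1} d_{l,j} \, z^l$ (respectively $l \in \{-n_j+1, \ldots, m_j\}$) into~\eqref{eq:moprlI}--\eqref{eq:moprlInorm} (respectively~\eqref{eq:moprlI*}--\eqref{eq:moprlI*norm}), obtaining a square system whose rows are indexed by the single variable $k$ and whose columns are indexed by the double index $(j, l)$, with entry $c_{k-l, j}$ and inhomogeneous data supplied by the normalization row. The substitution $k \leftrightarrow -l$, $l \leftrightarrow -k$ is a transposition together with reversals of row- and column-orderings that matches the (iii) matrix with the one from (ii), and the (iv) matrix with the one from (i); consequently each has determinant equal to $\pm \det T_{\bm{n};\bm{m}}$, and the non-vanishing condition is identical.

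The only real work is the careful bookkeeping needed to confirm that, after this relabeling, each of the four matrices coincides with $T_{\bm{n};\bm{m}}$ or its transpose. The edge case when $n_j = -m_j$ for some $j$ produces an empty $j$-th block, handled by the natural convention that empty sums contribute nothing; the global edge case $\bm{n} = -\bm{m}$ is excluded by hypothesis.
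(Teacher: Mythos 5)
Your proposal is correct and follows essentially the same route as the paper: the paper likewise treats the unspecified coefficients as unknowns, observes that each orthogonality system is a square linear system whose coefficient matrix is $T_{\bm{n};\bm{m}}$ for (i)--(ii) and its transpose (up to reordering) for (iii)--(iv), and concludes by the standard determinant criterion. Your version simply carries out the index bookkeeping that the paper leaves implicit.
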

\begin{proof}
    Treating the coefficients of $\Phi_{\bm{n},\bm{m}}$ in~\eqref{eq:moprlIIspan} as unknowns, write the system of equations \eqref{eq:moprlII}. Its $(\abs{\bm{n}}+\abs{\bm{m}}) \times (\abs{\bm{n}}+\abs{\bm{m}})$ coefficient matrix is $T_{\bm{n};\bm{m}}$ which proves (i). 

    Similar arguments work in other cases, leading to the same coefficient matrix $T_{\bm{n};\bm{m}}$ in (ii) and to the transpose of $T_{\bm{n};\bm{m}}$ in (iii)  and (iv).
\end{proof}
\begin{rem}
    Observe that in (iii) and (iv) we are using $\bm{n}$-indices with the negative sign and $\bm{m}$-indices with the positive sign, as opposed to the cases  (i) and (ii). This is to ensure duality, i.e., that uniqueness of $\Phi_{\bm{n};\bm{m}}$  is equivalent to the uniqueness of $\bm{\Xi}_{\bm{n};\bm{m}}$, as the proposition above shows. 
\end{rem}
\begin{rem}\label{rem:r=1} 
In the case of orthogonality with respect to  one probability measure   (so that $r=1$, $\bm{n} = n \in \Z$, $\bm{m} = m \in \Z$, $n+m\ge0$), it is easy to see that 
\begin{alignat*}{2}
    \Phi_{n;m}(z) &= z^{-m}\Phi_{n+m}(z), \qquad  &\Phi_{n;m}^*(z) &= z^{-m}\Phi^*_{n+m}(z),
    \\
    {\Xi}_{n;m}(z) & = \tfrac{1}{\kappa_{n+m-1}} z^{-n}\Phi^*_{n+m-1}(z),
    \qquad 
    &{\Xi}^*_{n;m}(z) &= \tfrac{1}{\kappa_{n+m-1}} z^{-n+1} \Phi_{n+m-1}(z),
\end{alignat*}
where $\kappa_j = L[\Phi_j(w)w^{-j}] = L[|\Phi_j(w)|^2]$. 
\end{rem}

The Laurent polynomials described in   (i), (ii),  (iii), and (iv) will be called type II, type II$^*$, I, and I$^*$, respectively. For the case of indices of the form $(\bm{n};-\bm{n})$ (for any $\bm{n}\in\Z^r$), which are all normal, it is natural to take $\Phi_{\bm{n};-\bm{n}}(z) = \Phi^*_{\bm{n};-\bm{n}}(z) = 1$, $\bm{\Xi}_{\bm{n};-\bm{n}}(z) = \bm{\Xi}^*_{\bm{n};-\bm{n}}(z) = \bm{0}$. 

If $(\bm{n};\bm{m})$ is normal, we will write $\alpha_{\bm{n};\bm{m}}$ for the $z^{-\abs{\bm{m}}}$-coefficient of $\Phi_{\bm{n};\bm{m}}(z)$ and $\beta_{\bm{n};\bm{m}}$ for the $z^{\abs{\bm{n}}}$-coefficient of $\Phi_{\bm{n};\bm{m}}^*(z)$. These should be viewed as the generalized Verblunsky 
recurrence coefficients, see Section~\ref{ss:SzegoNEW}.




Let us introduce the following notation. Given a function $f(z)$ of a complex variable, let
\begin{equation}\label{eq:sharp1}
        f^\sharp(z) = \overline{f(1/\bar{z})}.
\end{equation}
Similarly, given a moment functional $L$ with $L[w^{-k}] = c_k$, let
\begin{equation}\label{eq:sharp2}
        L^\sharp[w^{-k}] = \overline{L[w^k]} = \bar{c}_{-k}, \qquad k\in\Z.
\end{equation}
Equivalently,
\begin{equation}\label{eq:sharp3}
    L^\sharp[f(z)] = \overline{L[f^\sharp(z)]}.
\end{equation}

\begin{prop}\label{thm:reversal vs star polynomials}
    $(\bm{n};\bm{m})\in \mathfrak{C}_{2r}$ 
    is normal with respect to a system $\bm{L}=(L_1,\ldots,L_r)$ if and only if $(\bm{m};\bm{n})$ is normal with respect to $\bm{L}^\sharp =(L_1^\sharp,\ldots,L_r^\sharp)$. In this case,
\begin{align}
\label{eq:symm1}
    &\Phi_{\bm{n};\bm{m}}(z,\bm{L}^\sharp)  = \Phi^*_{\bm{m};\bm{n}}(z,\bm{L}) ^\sharp,
    \\
    \label{eq:symm2}
    &\Phi^*_{\bm{n};\bm{m}}(z,\bm{L}^\sharp) = \Phi_{\bm{m};\bm{n}}(z,\bm{L})^\sharp,
    \\
    \label{eq:symm3}
    &\bm\Xi_{\bm{n};\bm{m}}(z,\bm{L}^\sharp)  = \bm\Xi^*_{\bm{m};\bm{n}}(z,\bm{L}) ^\sharp,
    \\
    \label{eq:symm4}
    &\bm\Xi^*_{\bm{n};\bm{m}}(z,\bm{L}^\sharp) = \bm\Xi_{\bm{m};\bm{n}}(z,\bm{L})^\sharp,
    \\
    \label{eq:symm5}
    &\alpha_{\bm{n};\bm{m}}(\bm{L}^\sharp) = \bar{\beta}_{\bm{m};\bm{n}}(\bm{L}),
    \\
    \label{eq:symm6}
    &\beta_{\bm{n};\bm{m}}(\bm{L}^\sharp) = \bar{\alpha}_{\bm{m};\bm{n}}(\bm{L}).
    \end{align}

    Furthermore, if each $L_j$ satisfies $c_{k,j} = \bar{c}_{-k,j}$ for all $k\in\N$ (e.g., if it is induced by a probability measure \eqref{eq:functionals given by measures} on the unit circle $\T$), then $(\bm{n};\bm{m})$ is normal if and only if $(\bm{m};\bm{n})$ is normal, and 
    \begin{equation}\label{eq:symm7}
        \Phi_{\bm{n};\bm{m}}^\sharp = \Phi_{\bm{m};\bm{n}}^*,
        \qquad
        \bm{\Xi}_{\bm{n};\bm{m}}^\sharp = \bm{\Xi}_{\bm{m};\bm{n}}^*, 
        \qquad 
        \alpha_{\bm{n};\bm{m}} = \bar{\beta}_{\bm{m};\bm{n}}
    \end{equation}
    hold.
\end{prop}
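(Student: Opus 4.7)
The plan is to use the sharp operation as a direct correspondence between the four types of Laurent polynomials in Proposition~\ref{prop:normality}. The key algebraic fact I would exploit is that if $f(z)=\sum_{k=-|\bm m|}^{|\bm n|}a_k z^k$, then $f^\sharp(z)=\sum_{k=-|\bm n|}^{|\bm m|}\bar a_{-k}z^k$; thus sharp carries the type II span~\eqref{eq:moprlIIspan} into the type II$^*$ span~\eqref{eq:moprlII*span} with $\bm n$ and $\bm m$ interchanged, and similarly pairs type I with type I$^*$ through~\eqref{eq:moprlIspan} versus~\eqref{eq:moprlI*span}. The monic ends (with coefficient $1$) of one type map to the monic ends of the dual type on the opposite side, so the leading coefficients $\alpha$ and $\bar\beta$ must swap.

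First I would verify~\eqref{eq:symm1}: the Laurent polynomial $\Phi^*_{\bm m;\bm n}(\cdot,\bm L)^\sharp$ has the correct span~\eqref{eq:moprlIIspan} for $\Phi_{\bm n;\bm m}(\cdot,\bm L^\sharp)$ by the above. For the orthogonality,~\eqref{eq:sharp3} gives
\[
L_j^\sharp\bigl[\Phi^*_{\bm m;\bm n}(w,\bm L)^\sharp\,w^{-k}\bigr]=\overline{L_j\bigl[\Phi^*_{\bm m;\bm n}(w,\bm L)\,w^{k}\bigr]},
\]
and the right-hand side vanishes for $-m_j\le k\le n_j-1$ because of~\eqref{eq:moprlII*} applied to $(\bm m;\bm n)$ under the substitution $\ell=-k$. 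The uniqueness half of Proposition~\ref{prop:normality}(i) then yields~\eqref{eq:symm1}, and the existence half gives the normality equivalence: $(\bm m;\bm n)$ is normal for $\bm L$ iff $(\bm n;\bm m)$ is normal for $\bm L^\sharp$. Reading off the $z^{-|\bm m|}$ coefficient gives~\eqref{eq:symm5}. The identities~\eqref{eq:symm2} and~\eqref{eq:symm6} are obtained identically by applying sharp to $\Phi_{\bm m;\bm n}(\cdot,\bm L)$ and invoking Proposition~\ref{prop:normality}(ii);~\eqref{eq:symm3} and~\eqref{eq:symm4} follow by the same scheme applied to $\bm\Xi^*_{\bm m;\bm n}$ and $\bm\Xi_{\bm m;\bm n}$, using (iv) and (iii) respectively, with the observation that the orthogonality range $-|\bm n|+1\le k\le |\bm m|-1$ is symmetric under $k\mapsto -k$ (after swapping $\bm n,\bm m$) and that the normalization indices $k=-|\bm n|$ and $k=|\bm m|$ interchange with normalization value $1$ preserved under conjugation. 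The final assertion then follows because $c_{k,j}=\bar c_{-k,j}$ is exactly $L_j^\sharp=L_j$, collapsing~\eqref{eq:symm1}--\eqref{eq:symm6} to~\eqref{eq:symm7}.

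The main obstacle I anticipate is careful index bookkeeping: making sure that the off-by-one shifts distinguishing types II and II$^*$ (and I versus I$^*$) line up exactly under the substitution $k\mapsto -k$ combined with the swap $\bm n\leftrightarrow\bm m$, so that the pairings are II$\leftrightarrow$II$^*$ and I$\leftrightarrow$I$^*$ rather than II$\leftrightarrow$II. Once this is verified on each of the four types, the remainder of the argument is mechanical.
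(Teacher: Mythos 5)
Your proof is correct and follows essentially the same route as the paper: apply $\sharp$, check that the spans, orthogonality ranges, and normalizations match up after swapping $\bm{n}\leftrightarrow\bm{m}$ and pairing II$\leftrightarrow$II$^*$, I$\leftrightarrow$I$^*$, then read off the extreme coefficients to get \eqref{eq:symm5}--\eqref{eq:symm6}. The only cosmetic difference is that the paper obtains the normality equivalence directly from the determinant identity $\det T_{\bm{n};\bm{m}}(\bm{L})=\pm\overline{\det T_{\bm{m};\bm{n}}(\bm{L}^\sharp)}$, whereas you route it through the $\sharp$-bijection of solution sets combined with Proposition~\ref{prop:normality}; the two are equivalent.
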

\begin{proof}
    It is easy to see from~\eqref{eq:T} that $\det T_{\bm{n};\bm{m}}(\bm{L}) = \pm \overline{\det T_{\bm{m};\bm{n}}(\bm{L}^\sharp)}$, which implies the normality claim.  
    Equalities~\eqref{eq:symm1}--\eqref{eq:symm4} follow from the definitions and~\eqref{eq:sharp3}.
    Equalities~\eqref{eq:symm1}--\eqref{eq:symm2} then imply~\eqref{eq:symm5}--\eqref{eq:symm6}. 

    In the special case when the moments $c_{k,j}$ in~\eqref{eq:moments linear functional} satisfy $c_{k,j} = \bar{c}_{-k,j}$ (in particular this holds if $L_j$ corresponds to a positive measure on $\T$), then $L_j = L_j^\sharp$. Combining this with~\eqref{eq:symm1},~\eqref{eq:symm3}, and~\eqref{eq:symm5} gives \eqref{eq:symm7}.
\end{proof}
\begin{rem}\label{rem:annoying1}
    The type II and I polynomials  $\Phi_{\bm{n}}$, $\Phi_{\bm{n}}^*$, $\bm{\Lambda}_{\bm{n}}$, $\bm{\Lambda}_{\bm{n}}^*$ from~\cite{KVMOPUC}  are  related to our Laurent polynomials via
\begin{alignat}{3}
    \label{eq:mopucII}
    &\Phi_{\bm{n}}(z) = \Phi_{\bm{n};\bm{0}}(z),\qquad &&  \Phi_{\bm{n}}^*(z) = \Phi^*_{\bm{n};\bm{0}}(z), \\ 
    \label{eq:mopucI}
    &\bm{\Lambda}_{\bm{n}}(z) = z^{-1} \bm{\Xi}^*_{\bm{0};\bm{n}}(z), \qquad &&  \bm{\Lambda}_{\bm{n}}^*(z) = \bm{\Xi}_{\bm{0};\bm{n}}(z).
\end{alignat}
    The same polynomials $\Phi_{\bm{n}}$, $\bm{\Lambda}_{\bm{n}}$  were also studied earlier in~\cite{MOPUC1,MOPUC2}.
\end{rem}
\begin{rem}\label{rem:annoying2}
    While the relations~\eqref{eq:mopucII} are natural, those in~\eqref{eq:mopucI} may at first appear somewhat unexpected. This can be explained by two factors. First, the polynomials $\bm\Lambda_{\bm{n}}$ analyzed in~\cite{MOPUC1,MOPUC2,KVMOPUC} for systems of positive measures are not the most natural objects in the setting of general linear functionals $L_j$ since their uniqueness  (normality) is {\it not} equivalent to that of $\Phi_{\bm{n}}$, but instead to that of $\Phi_{\bm{n}}$ corresponding to the functionals $L_j^\sharp$ (see~\eqref{eq:sharp2}).
    This explains the reversed order of indices $(\bm{0};\bm{n})$ in ~\eqref{eq:mopucI}. Second, the additional factor $z^{-1}$ is, in fact, natural, as it leads to a more aesthetically consistent formulation of the Hermite--Pad\'{e} problem~\eqref{eq:HPtypeI1swapped}--\eqref{eq:HPtypeI2swapped}, the type I recurrence relations~\eqref{eq:SzegoI1}--\eqref{eq:SzegoI2} (compare with~\eqref{eq:SzegoII1}--\eqref{eq:SzegoII2}), and the Christoffel--Darboux formula~\eqref{eq:cd formula1}--\eqref{eq:cd formula2} (compare with~\cite[Thm 6.1]{KVMOPUC}). 
\end{rem}

\section{A General Two-Point Hermite--Padé Problem of Type II}\label{ss:general type II hermite pade}


In this section we discuss the Hermite--Pad\'{e} approximation problem associated with the Laurent polynomials defined in the previous section.  

To each linear functional $L_j$, $j=1,\dots,r$, given in~\eqref{eq:moments linear functional}, we assign the formal power series
\begin{equation}
\label{eq:FGeneral}
    F_j^{(0)}(z) = c_{0,j} + 2\sum_{k = 1}^{\infty}c_{k,j}z^k, \quad F_j^{(\infty)}(z) = -c_{0,j} - 2\sum_{k = 1}^{\infty}c_{-k,j}z^{-k}.
\end{equation}
Since
\begin{equation}\label{eq:kernel}
        \frac{w+z}{w-z}
        =
        \begin{cases}
            1+ 2\sum_{k=1}^\infty \frac{z^k}{w^k},\qquad  & \mbox{if } |z|<|w|, \\
            -1 - 2\sum_{k=1}^\infty \frac{w^k}{z^k}, \qquad & \mbox{if } |z|>|w|, \\
        \end{cases}
\end{equation} 
the pair $(F_j^{(0)},F_j^{(\infty)})$ can formally be viewed as the generalized Carath\'{e}odory function
\begin{equation}\label{eq:generalized caratheodory function}
    F_j(z) = L_j[(w+z)(w-z)^{-1}].
\end{equation} 
In the case when $L_j$ is associated with a probability measure $\mu_j$ as in~\eqref{eq:functionals given by measures}, then~\eqref{eq:FGeneral} are the power series expansions of the Carath\'{e}odory function $F_j(z)$ \eqref{eq:F} of $\mu_j$ at $0$ and at $\infty$. 

Let two multi-indices $\bm{n} = (n_1,\dots,n_r)\in\N^r$, $\bm{m} = (m_1,\dots,m_r)\in\N^r$ 
be given
. Consider the generalized two-point Hermite--Padé problem of finding the Laurent polynomial
\begin{equation}\label{eq:spanFull}
    \Phi_{\bm{n};\bm{m}}
    \in \operatorname{span}\big\{z^k\big\}_{k = -\abs{\bm{m}}}^{\abs{\bm{n}}},
\end{equation}
that simultaneously satisfies
\begin{alignat}{3}
\label{eq:generalized two point hermite pade eq 1}
    & \Phi_{\bm{n};\bm{m}}(z)F_j^{(0)}(z) + \Psi_{\bm{n};\bm{m},j}(z) = \mathcal{O}(z^{n_j}),  \qquad &&z \rightarrow 0, \\\label{eq:generalized two point hermite pade eq 2}
    & \Phi_{\bm{n};\bm{m}}(z)F_j^{(\infty)}(z) + \Psi_{\bm{n};\bm{m},j}(z) = \mathcal{O}(z^{-m_j-1}), \qquad &&z \rightarrow \infty,
\end{alignat}
for some Laurent polynomials $\Psi_{\bm{n};\bm{m},1},\ldots,\Psi_{\bm{n};\bm{m},r} \in \operatorname{span}\big\{z^k\big\}_{k = -\abs{\bm{m}}}^{\abs{\bm{n}}}$.

Note that here we take $\bm{n}\ge\bm{0}$ and $\bm{m}\ge \bm{0}$, which is more restrictive than in the previous section. This is related to the fact that $F^{(0)}$ and $F^{(\infty)}$ start from the $z^0$-term, and therefore the problem~\eqref{eq:spanFull}, \eqref{eq:generalized two point hermite pade eq 1}, \eqref{eq:generalized two point hermite pade eq 2} can become ill-posed if $n_j<0$ or $m_j<0$.

\begin{thm}\label{thm:generalized hermite pade orthogonality}
    Any Laurent polynomial $\Phi_{\bm{n};\bm{m}}$ that solves ~\eqref{eq:spanFull}, \eqref{eq:generalized two point hermite pade eq 1}, \eqref{eq:generalized two point hermite pade eq 2} 
    automatically satisfies the orthogonality conditions~\eqref{eq:moprlII},
    and $\Psi_{\bm{n};\bm{m},j}$ is given by
    \begin{equation}\label{eq:psiGeneral}
        \Psi_{\bm{n};\bm{m},j}(z) = L_j\Big[\frac{w+z}{w-z}\big(\Phi_{\bm{n};\bm{m}}(w) - \Phi_{\bm{n};\bm{m}}(z)\big)\Big] + L_j[\Phi_{\bm{n};\bm{m}}(w)].
   \end{equation}
   Conversely, any Laurent polynomial $\Phi_{\bm{n},\bm{m}} \in \operatorname{span}\set{z^k}_{k = -\abs{\bm{m}}}^{\abs{\bm{n}}}$ that satisfies \eqref{eq:moprlII} solves the Hermite--Padé problem ~\eqref{eq:spanFull}, \eqref{eq:generalized two point hermite pade eq 1}, \eqref{eq:generalized two point hermite pade eq 2}  together with the Laurent polynomial \eqref{eq:psiGeneral}.
\end{thm}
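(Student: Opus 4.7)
The strategy is to compute both $\Phi_{\bm{n};\bm{m}}F_j^{(0)} + \Psi_{\bm{n};\bm{m},j}$ (formally at $z=0$) and $\Phi_{\bm{n};\bm{m}}F_j^{(\infty)} + \Psi_{\bm{n};\bm{m},j}$ (formally at $z=\infty$) in closed form, starting from~\eqref{eq:psiGeneral} and using the two expansions of the kernel $(w+z)/(w-z)$ in~\eqref{eq:kernel}, and then match the resulting coefficients against the Hermite--Pad\'{e} conditions~\eqref{eq:generalized two point hermite pade eq 1}--\eqref{eq:generalized two point hermite pade eq 2} and the orthogonality relations~\eqref{eq:moprlII}. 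First I would observe that~\eqref{eq:psiGeneral} indeed defines a Laurent polynomial in $z$: both $w^k-z^k$ and $w^{-k}-z^{-k}$ are divisible by $w-z$ in $\C[w,w^{-1},z,z^{-1}]$, so $(w+z)(\Phi_{\bm{n};\bm{m}}(w)-\Phi_{\bm{n};\bm{m}}(z))/(w-z)$ is a Laurent polynomial in $(w,z)$, and applying $L_j$ in the $w$-variable produces a Laurent polynomial in $z$. A monomial-by-monomial count, using $\Phi_{\bm{n};\bm{m}}\in\operatorname{span}\{z^k\}_{k=-|\bm{m}|}^{|\bm{n}|}$, then confirms that $\Psi_{\bm{n};\bm{m},j}$ stays within the same span.

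The key computation then goes as follows. Rewriting~\eqref{eq:psiGeneral} as
$$\Psi_{\bm{n};\bm{m},j}(z) = L_j^{w}\!\left[\frac{w+z}{w-z}\Phi_{\bm{n};\bm{m}}(w)\right] - \Phi_{\bm{n};\bm{m}}(z)\,L_j^{w}\!\left[\frac{w+z}{w-z}\right] + L_j[\Phi_{\bm{n};\bm{m}}],$$
and identifying $L_j^{w}[(w+z)/(w-z)]$ with the formal series $F_j^{(0)}(z)$ near $z=0$ (respectively $F_j^{(\infty)}(z)$ near $z=\infty$) via~\eqref{eq:FGeneral}--\eqref{eq:kernel}, one obtains
$$\Phi_{\bm{n};\bm{m}}(z)F_j^{(0)}(z) + \Psi_{\bm{n};\bm{m},j}(z) = 2L_j[\Phi_{\bm{n};\bm{m}}] + 2\sum_{k\ge 1} L_j[\Phi_{\bm{n};\bm{m}}(w)w^{-k}]\,z^k$$
as a formal series at $z=0$, and
$$\Phi_{\bm{n};\bm{m}}(z)F_j^{(\infty)}(z) + \Psi_{\bm{n};\bm{m},j}(z) = -2\sum_{k\ge 1} L_j[\Phi_{\bm{n};\bm{m}}(w)w^{k}]\,z^{-k}$$
as a formal series at $z=\infty$.

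With these two explicit expansions in hand, both directions follow by reading off coefficients. For the converse, the vanishing conditions in~\eqref{eq:moprlII} assert precisely that the coefficients of $z^0,\dots,z^{n_j-1}$ in the first expansion and of $z^{-1},\dots,z^{-m_j}$ in the second all vanish, which gives~\eqref{eq:generalized two point hermite pade eq 1}--\eqref{eq:generalized two point hermite pade eq 2} with $\Psi_{\bm{n};\bm{m},j}$ defined by~\eqref{eq:psiGeneral}. For the forward direction, given any solution $(\Phi_{\bm{n};\bm{m}},\Psi'_1,\dots,\Psi'_r)$ of the Hermite--Pad\'{e} problem, write $\Psi'_j = \Psi_{\bm{n};\bm{m},j} + \delta_j$ with $\delta_j$ a Laurent polynomial; the $z\to 0$ constraint forces $\delta_j$ to contain no strictly negative powers of $z$, the $z\to\infty$ constraint forces no non-negative powers, so $\delta_j\equiv 0$, and the remaining vanishing-coefficient conditions deliver exactly~\eqref{eq:moprlII}.

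The main (and essentially only) subtlety is the correct interpretation of $L_j^{w}[(w+z)/(w-z)]$, since the kernel is not itself a Laurent polynomial and the functional $L_j$ is only defined on such: one must first expand the kernel via~\eqref{eq:kernel} in the appropriate regime ($|z|<|w|$ for the analysis at $z=0$ and $|z|>|w|$ for the analysis at $z=\infty$), and then apply $L_j$ term by term. Once this bookkeeping is fixed, the proof reduces to matching coefficients of two explicit power series. The edge cases $n_j=0$ or $m_j=0$ are immediate: the corresponding Hermite--Pad\'{e} condition imposes no constraint on that side, and the vanishing range on the other side matches~\eqref{eq:moprlII} exactly.
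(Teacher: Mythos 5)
Your proof is correct and follows essentially the same route as the paper's: both expand the kernel $(w+z)/(w-z)$ at $0$ and at $\infty$ via~\eqref{eq:kernel}, introduce what the paper calls $R^{(0)}_{\bm{n};\bm{m},j}$ and $R^{(\infty)}_{\bm{n};\bm{m},j}$, and reduce everything to matching coefficients. The only difference is organizational: you derive the two closed-form expansions of $\Phi_{\bm{n};\bm{m}}F_j+\Psi_{\bm{n};\bm{m},j}$ first and read both directions off them (with a clean $\delta_j$-splitting argument for the uniqueness of $\Psi_{\bm{n};\bm{m},j}$), whereas the paper first extracts the orthogonality~\eqref{eq:moprlII} from the overlap of the two coefficient-matching conditions~\eqref{eq:coefficients type II eq 1}--\eqref{eq:coefficients type II eq 2} and then verifies the formula~\eqref{eq:psiGeneral} separately.
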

\begin{rem}
    Combining this theorem with the results in~\cite{KVMLOPUC1,KNik}, we can see that up to a normalization constant, the solution to the Hermite--Pad\'{e} problem ~\eqref{eq:spanFull}, \eqref{eq:generalized two point hermite pade eq 1}, \eqref{eq:generalized two point hermite pade eq 2}  with $\bm{m}=\bm{n}$ is unique if $\bm{L}$ is an Angelesco, AT, or Nikishin system (with $r=2$) on the unit circle.
\end{rem}
\begin{proof}
    Let us write $\Phi_{\bm{n};\bm{m}}$ and $\Psi_{\bm{n};\bm{m},j}$ in the form
    \begin{align}\label{eq:coefficients of phi and psi}
        & \Phi_{\bm{n};\bm{m}}(z) =
        \kappa_{-\abs{\bm{m}}}z^{-\abs{\bm{m}}}+\dots + \kappa_{\abs{\bm{n}}}z^{\abs{\bm{n}}} , 
        \\ 
        \label{eq:coefficients of y}
        & \Psi_{\bm{n};\bm{m},j}(z) =
        \lambda_{-\abs{\bm{m}},j}z^{-\abs{\bm{m}}} + \dots +\lambda_{\abs{\bm{n}},j}z^{\abs{\bm{n}}}.
    \end{align}
    where there are finitely many non-zero $\lambda_{k,j}$ coefficients, so that~\eqref{eq:coefficients of y} is in fact a Laurent polynomial.
    Then we get
    \begin{align*}
        & \Phi_{\bm{n};\bm{m}}(z)F_j^{(0)}(z) = a_{-\abs{\bm{m}},j}z^{-\abs{\bm{m}}} + \dots + a_{n_j-1,j}z^{n_j-1} + \mathcal{O}(z^{n_j}), \quad z \rightarrow 0, \\
        & \Phi_{\bm{n};\bm{m}}(z)F_j^{(\infty)}(z) = b_{\abs{\bm{n}},j}z^{\abs{\bm{n}}} + \dots + b_{-m_j,j}z^{-m_j} + \mathcal{O}(z^{-m_j-1}), \quad z \rightarrow \infty,
    \end{align*}
    where $a_{-\abs{\bm{m}},j},\dots,a_{n_j-1,j}$ and $b_{-m_j,j},\dots,b_{\abs{\bm{n}}-1,j}$ are given by
    \begin{align}
        \label{eq:coefficients of series type II}
         & a_{k,j} = \kappa_kc_{0,j} + 2\kappa_{k-1}c_{1,j} + \dots + 2\kappa_{-\abs{\bm{m}}} c_{\abs{\bm{m}}+k,j},  
         \\ 
         \label{eq:coefficients of series 2}
         & b_{k,j} = -\kappa_kc_{0,j} - 2\kappa_{k+1} c_{-1,j} - \dots - 2 \kappa_{\abs{\bm{n}}} c_{-\abs{\bm{n}}+k,j}.
    \end{align}
    For \eqref{eq:generalized two point hermite pade eq 1}--\eqref{eq:generalized two point hermite pade eq 2} to hold we must have
    \begin{align}\label{eq:coefficients type II eq 1}
        \lambda_{k,j} = a_{k,j}, \qquad &k = -\abs{\bm{m}},\dots,n_j-1, \\\label{eq:coefficients type II eq 2}
        \lambda_{k,j} = b_{k,j}, \qquad &k = -m_j,\dots,\abs{\bm{n}}.
    \end{align}
    This implies 
    \begin{equation}
        a_{k,j} = b_{k,j}, \qquad k = -m_j,\dots,n_j-1,
    \end{equation}
    and by \eqref{eq:moments linear functional} and \eqref{eq:coefficients of series type II}--\eqref{eq:coefficients of series 2}, this turns into \eqref{eq:moprlII}. 
    
    From \eqref{eq:coefficients type II eq 1}--\eqref{eq:coefficients type II eq 2} we also see that $\Psi_{\bm{n};\bm{m},j}$ is uniquely determined by $\Phi_{\bm{n};\bm{m}}$. To compute $\Psi_{\bm{n};\bm{m},j}$, write $\widetilde{\Psi}_{\bm{n};\bm{m},j}$ for the right hand side of \eqref{eq:psiGeneral}, and define $R_{\bm{n};\bm{m},j}$ by
    \begin{equation}
        R_{\bm{n};\bm{m},j}(z) = L_j\Big[\frac{w+z}{w-z}\Phi_{\bm{n};\bm{m}}(w) \Big].
    \end{equation}
    Again, this should be treated as the two formal power series
    \begin{align}\label{eq:secondKindFunctionTypeII 1}
    R^{(0)}_{\bm{n};\bm{m},j}(z) 
        & =  L_j[\Phi_{\bm{n};\bm{m}}(w)]+ 2\sum_{k=1}^\infty z^k L_j[w^{-k}\Phi_{\bm{n};\bm{m}}(w)],  & z & \to 0,  
        \\ \label{eq:secondKindFunctionTypeII 2}
    R^{(\infty)}_{\bm{n};\bm{m},j}(z) & =  -L_j[\Phi_{\bm{n};\bm{m}}(w)] - 2\sum_{k=1}^\infty z^{-k} L_j[w^{k}\Phi_{\bm{n};\bm{m}}(w)], & z & \to \infty. 
    \end{align}
    By \eqref{eq:moprlII}
    we get
    \begin{align}\label{eq:R asymptotics type II eq 1}
        R^{(0)}_{\bm{n};\bm{m},j}(z) 
        & =  \mathcal{O}(z^{n_j}),  & z & \to 0, \\ \label{eq:R asymptotics type II eq 2}
        R^{(\infty)}_{\bm{n};\bm{m},j}(z) & = -L_j[\Phi_{\bm{n};\bm{m}}(w)] + \mathcal{O}(z^{-m_j-1}),  & z & \to \infty.
    \end{align}
    The constant term in~\eqref{eq:R asymptotics type II eq 2} is only relevant if $n_j=0$ and otherwise can be removed by orthogonality~\eqref{eq:moprlII}. 
    
    Note that $\frac{w+z}{w-z}(\Phi_{\bm{n};\bm{m}}(w) - \Phi_{\bm{n};\bm{m}}(z))$ is a Laurent polynomial in $z$ (as well as in $w$). By~\eqref{eq:kernel},
    this polynomial coincides with the $z$-power series at $0$
    \begin{equation}
        \Big(1+ 2\sum_{k=1}^\infty \frac{z^k}{w^k} \Big) \Phi_{\bm{n};\bm{m}}(w) -  \Big(1+ 2\sum_{k=1}^\infty \frac{z^k}{w^k} \Big)  \Phi_{\bm{n};\bm{m}}(z),
    \end{equation}
    as well as with the $z$-power series at $\infty$
    \begin{equation}
       \Big(-1- 2\sum_{k=1}^\infty \frac{w^k}{z^k} \Big) \Phi_{\bm{n};\bm{m}}(w) - \Big(-1- 2\sum_{k=1}^\infty \frac{w^k}{z^k} \Big)  \Phi_{\bm{n};\bm{m}}(z). 
    \end{equation}
    Therefore we can apply the $L_j$ functional to obtain
    \begin{align}\label{eq:psiVsRGeneral}
        \widetilde{\Psi}_{\bm{n};\bm{m},j}(z) & = {R}^{(0)}_{\bm{n};\bm{m},j}(z)  -  \Phi_{\bm{n};\bm{m}}(z) F^{(0)}_j(z)  + L_j[\Phi_{\bm{n},\bm{m}}(w)], 
        \\
        \widetilde{\Psi}_{\bm{n};\bm{m},j}(z) & = {R}^{(\infty)}_{\bm{n};\bm{m},j}(z)  -  \Phi_{\bm{n};\bm{m}}(z) F^{(\infty)}_j(z) + L_j[\Phi_{\bm{n},\bm{m}}(w)].
    \end{align}
    By \eqref{eq:R asymptotics type II eq 1}-\eqref{eq:R asymptotics type II eq 2} we get that $\widetilde{\Psi}_{\bm{n};\bm{m},j}$ satisfies equalities~\eqref{eq:generalized two point hermite pade eq 1}--\eqref{eq:generalized two point hermite pade eq 2} which implies
    $\widetilde{\Psi}_{\bm{n};\bm{m},j} = \Psi_{\bm{n};\bm{m},j}$. 

    Conversely, given $\Phi_{\bm{n};\bm{m}}$ and the corresponding $\Psi_{\bm{n};\bm{m},j}$ as in~\eqref{eq:psiGeneral}, define $R_{\bm{n};\bm{m},j}$ as in~\eqref{eq:secondKindFunctionTypeII 1}--\eqref{eq:secondKindFunctionTypeII 2}. As above, we show that it satisfies~\eqref{eq:R asymptotics type II eq 1} and ~\eqref{eq:R asymptotics type II eq 2}, which then becomes ~\eqref{eq:generalized two point hermite pade eq 1}, \eqref{eq:generalized two point hermite pade eq 2}.
\end{proof}

Let us  also introduce the related Hermite--Padé problem
\begin{alignat}{3}
\label{eq:generalized two point hermite pade star eq 1}
    & \Phi_{\bm{n};\bm{m}}^*(z)F_j^{(0)}(z) -\Psi_{\bm{n};\bm{m},j}^*(z) = \mathcal{O}(z^{n_j+1}), \qquad &&z \rightarrow 0, \\\label{eq:generalized two point hermite pade star eq 2}
    & \Phi_{\bm{n};\bm{m}}^*(z)F_j^{(\infty)}(z) - \Psi_{\bm{n};\bm{m},j}^*(z) = \mathcal{O}(z^{-m_j}), \qquad &&z \rightarrow \infty,
\end{alignat}
for Laurent polynomials 
\begin{equation}\label{eq:spanFull*}
 \Phi_{\bm{n};\bm{m}}^*, \Psi_{\bm{n};\bm{m},1}^*, \ldots, \Psi_{\bm{n};\bm{m},r}^* \in \operatorname{span}\big\{z^k\big\}_{k = -\abs{\bm{m}}}^{\abs{\bm{n}}}.
\end{equation}
\begin{thm}
    Any Laurent polynomial $\Phi_{\bm{n};\bm{m}}^*$ that solves~\eqref{eq:generalized two point hermite pade star eq 1}, \eqref{eq:generalized two point hermite pade star eq 2}, \eqref{eq:spanFull*} must satisfy the orthogonality relations \eqref{eq:moprlII*}, and $\Psi_{\bm{n};\bm{m},j}^*$ is given by
    \begin{equation}\label{eq:psiGeneral*}
        \Psi_{\bm{n};\bm{m},j}^*(z) = L_j\Big[\frac{w+z}{w-z}\big(\Phi_{\bm{n};\bm{m}}^*(z) - \Phi_{\bm{n};\bm{m}}^*(w) \big)\Big] + L_j[\Phi_{\bm{n};\bm{m}}^*(w)].
    \end{equation}
     Conversely, any Laurent polynomial $\Phi^*_{\bm{n},\bm{m}} \in \operatorname{span}\set{z^k}_{k = -\abs{\bm{m}}}^{\abs{\bm{n}}}$ that satisfies \eqref{eq:moprlII*} solves the Hermite--Padé problem ~\eqref{eq:generalized two point hermite pade star eq 1}, \eqref{eq:generalized two point hermite pade star eq 2}, \eqref{eq:spanFull*}   together with the Laurent polynomial \eqref{eq:psiGeneral*}.
\end{thm}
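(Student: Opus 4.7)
The plan is to mirror the argument of Theorem~\ref{thm:generalized hermite pade orthogonality}; the starred version differs only in a sign flip in~\eqref{eq:generalized two point hermite pade star eq 1}--\eqref{eq:generalized two point hermite pade star eq 2}, a shift by one in the vanishing orders at $0$ and $\infty$, and the swap of arguments $\Phi^*(z)-\Phi^*(w)$ in~\eqref{eq:psiGeneral*} versus $\Phi(w)-\Phi(z)$ in~\eqref{eq:psiGeneral}. These three changes compensate one another, so no new ideas are needed; the whole statement amounts to careful bookkeeping.

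For the forward implication, I would write $\Phi^*_{\bm{n};\bm{m}}(z) = \sum_{k=-\abs{\bm{m}}}^{\abs{\bm{n}}} \kappa^*_k z^k$ and $\Psi^*_{\bm{n};\bm{m},j}(z) = \sum_{k=-\abs{\bm{m}}}^{\abs{\bm{n}}} \lambda^*_{k,j} z^k$, and expand $\Phi^*_{\bm{n};\bm{m}}(z) F_j^{(0)}(z)$ around $0$ and $\Phi^*_{\bm{n};\bm{m}}(z) F_j^{(\infty)}(z)$ around $\infty$ exactly as in~\eqref{eq:coefficients of series type II}--\eqref{eq:coefficients of series 2}. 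The starred Padé conditions, with their shifted orders and the opposite sign, then translate into $\lambda^*_{k,j}=a^*_{k,j}$ for $-\abs{\bm{m}}\le k\le n_j$ and $\lambda^*_{k,j}=b^*_{k,j}$ for $-m_j+1\le k\le\abs{\bm{n}}$. Equating the two matches on the overlap $-m_j+1\le k\le n_j$ and substituting $L_j[w^{-p}]=c_{p,j}$ produces exactly~\eqref{eq:moprlII*}; the same matching also shows that $\Psi^*_{\bm{n};\bm{m},j}$ is uniquely determined by $\Phi^*_{\bm{n};\bm{m}}$.

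To identify this $\Psi^*_{\bm{n};\bm{m},j}$ with the closed expression in~\eqref{eq:psiGeneral*}, I would introduce $R^*_{\bm{n};\bm{m},j}(z) = L_j[\tfrac{w+z}{w-z}\Phi^*_{\bm{n};\bm{m}}(w)]$ and expand it at $0$ and at $\infty$ as in~\eqref{eq:secondKindFunctionTypeII 1}--\eqref{eq:secondKindFunctionTypeII 2}. The orthogonality~\eqref{eq:moprlII*} truncates these series to $R^{*,(0)}_{\bm{n};\bm{m},j}(z) = L_j[\Phi^*_{\bm{n};\bm{m}}(w)] + \mathcal{O}(z^{n_j+1})$ as $z\to 0$ and $R^{*,(\infty)}_{\bm{n};\bm{m},j}(z) = -L_j[\Phi^*_{\bm{n};\bm{m}}(w)] + \mathcal{O}(z^{-m_j})$ as $z\to\infty$. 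Since $\tfrac{w+z}{w-z}\bigl(\Phi^*_{\bm{n};\bm{m}}(z)-\Phi^*_{\bm{n};\bm{m}}(w)\bigr)$ is a Laurent polynomial in both variables, applying $L_j$ via its two valid $z$-expansions rewrites the right-hand side of~\eqref{eq:psiGeneral*} as $\Phi^*_{\bm{n};\bm{m}}(z)F^{(0)}_j(z) - R^{*,(0)}_{\bm{n};\bm{m},j}(z) + L_j[\Phi^*_{\bm{n};\bm{m}}(w)]$ near $0$ and analogously near $\infty$, and inserting the two asymptotics delivers~\eqref{eq:generalized two point hermite pade star eq 1}--\eqref{eq:generalized two point hermite pade star eq 2}. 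The converse runs the same chain in reverse: starting from a $\Phi^*_{\bm{n};\bm{m}}$ satisfying~\eqref{eq:moprlII*}, define $\Psi^*_{\bm{n};\bm{m},j}$ by~\eqref{eq:psiGeneral*} and read off the two asymptotics from the same computation.

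The main obstacle I anticipate is not conceptual but algebraic: the shifted ranges and the sign flip must be tracked at every step so that the constant terms $\pm L_j[\Phi^*_{\bm{n};\bm{m}}(w)]$ cancel correctly and the desired vanishing orders $n_j+1$ and $-m_j$ emerge exactly. In the non-starred proof of Theorem~\ref{thm:generalized hermite pade orthogonality} the analogous cancellation is arranged by the opposite ordering of the Laurent polynomial $\tfrac{w+z}{w-z}\bigl(\Phi(w)-\Phi(z)\bigr)$, and getting these two conventions to mesh cleanly is the only place where a careless sign could derail the argument.
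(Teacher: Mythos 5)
Your proposal is correct and takes exactly the paper's route: the paper's own proof just says to repeat the argument of Theorem~\ref{thm:generalized hermite pade orthogonality}, remarking only that the extra term $L_j[\Phi^*_{\bm{n};\bm{m}}(w)]$ remains in $R^{(0)}$ but vanishes for $R^{(\infty)}$. The single detail worth making explicit (your asymptotics are right, but the constants do not literally ``cancel'') is that at $\infty$ the leftover $-2L_j[\Phi^*_{\bm{n};\bm{m}}(w)]$ is harmless because $L_j[\Phi^*_{\bm{n};\bm{m}}(w)]=0$ by \eqref{eq:moprlII*} when $m_j\ge 1$, while for $m_j=0$ it is already within the allowed order $\mathcal{O}(z^{-m_j})=\mathcal{O}(1)$.
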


\begin{proof}
   Use the same methods as in the proof of Theorem~\ref{thm:generalized hermite pade orthogonality}.
    The minus sign in~\eqref{eq:psiGeneral*} arises from the fact that, when expanding~\eqref{eq:secondKindFunctionTypeII 1} and~\eqref{eq:secondKindFunctionTypeII 2} for $\Phi_{\bm{n};\bm{m}}^*$, the additional term $L_j[\Phi_{\bm{n};\bm{m}}^*(w)]$ vanishes for $R^{(\infty)}$ but remains for $R^{(0)}$.
\end{proof}


\section{A General Two-Point Hermite-Padé Problem of Type I}\label{ss:HPI}

Let again $\bm{n}\in\N^r$ and $\bm{m}\in\N^r$ be given. 
We now consider the two-point Hermite--Padé problem which can be viewed as dual to the one in the previous section~\eqref{eq:spanFull}, \eqref{eq:generalized two point hermite pade eq 1}, \eqref{eq:generalized two point hermite pade eq 2}. 

Let $F_j^{(0)}$, $F_j^{(\infty)}$ be the formal series~\eqref{eq:FGeneral} associated with linear functionals $L_j$ in~\eqref{eq:moments linear functional}. As before, if $L_j$ corresponds to integration with respect to a probability measure as in~\eqref{eq:functionals given by measures}, then $F_j^{(0)}$ and $F_j^{(\infty)}$ are the power series expansions of the Carath\'{e}odory function $F_j(z)$ \eqref{eq:F} of $\mu_j$ at $0$ and at $\infty$.
We are interested in the solutions to the following approximation problem:
\begin{alignat}{3}
\label{eq:HPtypeI1swapped}
    & \sum_{j = 1}^r \Xi_{\bm{n};\bm{m},j}(z)F^{(0)}_j(z) + \Upsilon_{\bm{n};\bm{m}}(z) = \mathcal{O}(z^{\abs{\bm{m}}}), \qquad && z \rightarrow 0, \\
\label{eq:HPtypeI2swapped}
    & \sum_{j = 1}^r \Xi_{\bm{n};\bm{m},j}(z)F_j^{(\infty)}(z) + \Upsilon_{\bm{n};\bm{m}}(z) = \mathcal{O}(z^{-\abs{\bm{n}}}), \qquad && z \rightarrow \infty,
\end{alignat}
for Laurent polynomials 
\begin{equation}
    \label{eq:HPtypeIspanSwapped}
    \Xi_{\bm{n};\bm{m},j}(z) \in \operatorname{span}\big\{z^{k}\big\}_{k=-n_j}^{m_j-1},
    \qquad
     \Upsilon_{\bm{n};\bm{m}}(z) \in \operatorname{span}\big\{z^{k}\big\}_{k=-N}^{M-1},
\end{equation}
where $N:=\max\{n_1,\ldots,n_r\}$, $M:=\max\{m_1,\ldots,m_r\}$.


\begin{thm}
    If the Laurent polynomials 
    $\bm{\Xi}_{\bm{n};\bm{m}} = (\Xi_{\bm{n};\bm{m},1},\dots,\Xi_{\bm{n};\bm{m},r})$ and $\Upsilon_{\bm{n};\bm{m}}$ solve~\eqref{eq:HPtypeI1swapped}, \eqref{eq:HPtypeI2swapped}, \eqref{eq:HPtypeIspanSwapped},  then $\bm{\Xi}_{\bm{n};\bm{m}}$ satisfies the orthogonality relations \eqref{eq:moprlI}, and $\Upsilon_{\bm{n};\bm{m}}$ is given by
    \begin{equation}\label{eq:upsilon}
        \Upsilon_{\bm{n};\bm{m}}(z) =  \sum_{j = 1}^r L_j\Big[\frac{w+z}{w-z}\big(\Xi_{\bm{n};\bm{m},j}(w)-\Xi_{\bm{n};\bm{m},j}(z)\big)\Big] + {\delta_{\bm{n};\bm{m}} \sum_{j = 1}^r L_j[\Xi_{\bm{n};\bm{m},j}(w)]},
    \end{equation}
   {where the constant $\delta_{\bm{n};\bm{m}}$ is given by
   \begin{equation}\label{eq:delta}
       \delta_{\bm{n};\bm{m}} = \begin{cases}
           1, \qquad & \bm{m}=\bm{0} \mbox{ and } \bm{n}\neq\bm{0}, \\
           -1, \qquad & \bm{n}=\bm{0} \mbox{ and } \bm{m}\neq\bm{0}, \\
           0, \qquad &  \mbox{otherwise}.
       \end{cases}
   \end{equation}}

    Conversely, any vector $\bm{\Xi}_{\bm{n};\bm{m}} = (\Xi_{\bm{n};\bm{m},1},\dots,\Xi_{\bm{n};\bm{m},r})$ with $\Xi_{\bm{n};\bm{m},j} \in \operatorname{span}\set{z^k}_{k = -n_j}^{m_j-1}$, that satisfies 
    ~\eqref{eq:moprlI}, solves the Hermite--Pad\'{e} problem~\eqref{eq:HPtypeI1swapped}, \eqref{eq:HPtypeI2swapped}, \eqref{eq:HPtypeIspanSwapped} together with the Laurent polynomial~\eqref{eq:upsilon}.
\end{thm}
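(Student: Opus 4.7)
The plan is to adapt the argument from the proof of Theorem~\ref{thm:generalized hermite pade orthogonality} to the type~I setting. The key objects to introduce are the formal "second-kind" series
\begin{equation*}
R^{(0)}(z) := \sum_{j=1}^r L_j\left[\tfrac{w+z}{w-z}\Xi_{\bm{n};\bm{m},j}(w)\right], \qquad R^{(\infty)}(z) := \sum_{j=1}^r L_j\left[\tfrac{w+z}{w-z}\Xi_{\bm{n};\bm{m},j}(w)\right],
\end{equation*}
interpreted via the two expansions in~\eqref{eq:kernel} at $z\to 0$ and $z\to\infty$, respectively. Setting $t_k := \sum_j L_j[w^{-k}\Xi_{\bm{n};\bm{m},j}(w)]$ and $s := t_0$, these expand as $R^{(0)}(z) = s + 2\sum_{k\ge 1} t_k z^k$ and $R^{(\infty)}(z) = -s - 2\sum_{k\ge 1} t_{-k} z^{-k}$. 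As in the proof of Theorem~\ref{thm:generalized hermite pade orthogonality}, the combination
\begin{equation*}
\widetilde{\Upsilon}(z) := \sum_{j=1}^r L_j\left[\tfrac{w+z}{w-z}\bigl(\Xi_{\bm{n};\bm{m},j}(w) - \Xi_{\bm{n};\bm{m},j}(z)\bigr)\right]
\end{equation*}
is a genuine Laurent polynomial in $z$, and equals $R^{(0)} - \sum_j \Xi_{\bm{n};\bm{m},j} F_j^{(0)}$ in the $z\to 0$ sense and $R^{(\infty)} - \sum_j \Xi_{\bm{n};\bm{m},j} F_j^{(\infty)}$ in the $z\to\infty$ sense.

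Writing $\Upsilon_{\bm{n};\bm{m}} = \widetilde{\Upsilon} + \Delta$ for an unknown Laurent polynomial $\Delta$, the Hermite--Pad\'e conditions~\eqref{eq:HPtypeI1swapped}--\eqref{eq:HPtypeI2swapped} translate into $R^{(0)}(z) + \Delta(z) = \mathcal{O}(z^{|\bm{m}|})$ at $0$ and $R^{(\infty)}(z) + \Delta(z) = \mathcal{O}(z^{-|\bm{n}|})$ at $\infty$. Because $\Delta$ is a single Laurent polynomial whose coefficients must be consistent between the two expansions, matching powers of $z$ separately forces $\Delta_k = 0$ for every $k\neq 0$, as well as $t_k = 0$ for $1 \le k \le |\bm{m}|-1$ and $t_{-k} = 0$ for $1 \le k \le |\bm{n}|-1$. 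These are precisely the orthogonality conditions~\eqref{eq:moprlI} at nonzero values of $k$.

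The constant coefficient $\Delta_0$ is the delicate part and is where the type~I argument diverges from the type~II one. If $|\bm{m}|\ge 1$ the vanishing at $0$ forces $\Delta_0 = -s$, while if $|\bm{n}|\ge 1$ the vanishing at $\infty$ forces $\Delta_0 = s$. When both conditions are present, they are compatible only if $s = 0$ (which coincides with~\eqref{eq:moprlI} at $k=0$) and $\Delta_0 = 0$, giving $\delta_{\bm{n};\bm{m}} = 0$. When only one is active, $s$ is unconstrained and $\Delta_0 = \pm s$ with the sign dictated by whether $\bm{m}$ or $\bm{n}$ is zero, reproducing exactly the four-case formula~\eqref{eq:delta} and thus the formula~\eqref{eq:upsilon} for $\Upsilon_{\bm{n};\bm{m}}$. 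This bookkeeping is the main technical point; otherwise the argument is formally identical to the type~II proof.

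For the converse, starting from $\bm{\Xi}_{\bm{n};\bm{m}}$ satisfying~\eqref{eq:moprlI} and defining $\Upsilon_{\bm{n};\bm{m}}$ by~\eqref{eq:upsilon}, I would run the same computation backwards. The orthogonality yields $R^{(0)}(z) = s + \mathcal{O}(z^{|\bm{m}|})$ and $R^{(\infty)}(z) = -s + \mathcal{O}(z^{-|\bm{n}|})$, so
\begin{equation*}
\sum_{j=1}^r \Xi_{\bm{n};\bm{m},j}(z) F_j^{(0)}(z) + \Upsilon_{\bm{n};\bm{m}}(z) = R^{(0)}(z) + \delta_{\bm{n};\bm{m}}\, s = (1 + \delta_{\bm{n};\bm{m}})\, s + \mathcal{O}(z^{|\bm{m}|}),
\end{equation*}
and analogously $(\delta_{\bm{n};\bm{m}} - 1)\,s + \mathcal{O}(z^{-|\bm{n}|})$ at $\infty$. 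The four cases in the definition~\eqref{eq:delta} of $\delta_{\bm{n};\bm{m}}$ are tailored so that the offending constant vanishes in whichever asymptotic requires it, and the Hermite--Pad\'e conditions~\eqref{eq:HPtypeI1swapped}--\eqref{eq:HPtypeI2swapped} follow.
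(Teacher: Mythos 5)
Your proposal is correct and follows essentially the same route as the paper's proof: both rest on the second-kind series $R^{(0)},R^{(\infty)}$, the kernel identity that makes $\sum_j L_j\big[\tfrac{w+z}{w-z}(\Xi_{\bm{n};\bm{m},j}(w)-\Xi_{\bm{n};\bm{m},j}(z))\big]$ a genuine Laurent polynomial equal to $R-\sum_j\Xi_{\bm{n};\bm{m},j}F_j$ in each expansion, and coefficient matching between the expansions at $0$ and $\infty$. Your only reorganization is to carry the unknown correction $\Delta=\Upsilon_{\bm{n};\bm{m}}-\widetilde{\Upsilon}$ through the whole argument, which in particular makes the case analysis behind $\delta_{\bm{n};\bm{m}}$ more explicit than the paper's brief remark that the constant terms are ``only relevant'' when $\bm{n}=\bm{0}$ or $\bm{m}=\bm{0}$.
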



\begin{proof}
    Write $\Xi_{\bm{n};\bm{m},j}$ and $\Upsilon_{\bm{n};\bm{m}}$ in the form 
    \begin{align*}\label{eq:lamdba}
         \Xi_{\bm{n};\bm{m},j} (z)&=  \kappa_{-n_j,j}z^{-n_j} + \kappa_{-n_j+1,j}z^{-n_j+1} + \ldots + \kappa_{m_j-1,j}z^{m_j-1}, \\ 
        \Upsilon_{\bm{n};\bm{m}}(z) & =\lambda_{-N}z^{-N} + \lambda_{-N+1}z^{-N+1} + \ldots + \lambda_{M-1}z^{M-1} .
    \end{align*}
    We put $\kappa_{k,j} = 0$ if $k \ge m_j$ or $k\le -n_j-1$, and 
    $\lambda_{k} = 0$ if $k \ge M$ or $k \le -N -1$. We then have 
    \begin{alignat*}{3}
        & \Xi_{\bm{n};\bm{m},j} F_j^{(0)} = a_{-n_j,j}z^{-n_j} + \ldots + a_{|\bm{m}|-1,j}z^{|\bm{m}|-1} + \mathcal{O}(z^{|\bm{m}|}), \quad &&z \rightarrow 0, \\
        & \Xi_{\bm{n};\bm{m},j} F_j^{(\infty)} = b_{m_j-1,j}z^{m_j-1} + \ldots + b_{-|\bm{n}|+1,j}z^{-|\bm{n}|+1} + \mathcal{O}(z^{-|\bm{n}|}), \quad  &&z \rightarrow \infty,
    \end{alignat*}
    where we have
    \begin{equation}
        \label{eq:coefficients of series 1 I}
        a_{k,j}  = \kappa_{k,j} c_{0,j}+ 2\kappa_{k-1,j}c_{1,j} + \ldots + 2\kappa_{-n_j,j} c_{n_j+k,j}
    \end{equation}
    for $k = -n_j,\ldots,|\bm{m}|-1$, and $a_{k,j}=0$ for $k <-n_j$, and
    \begin{equation}
        \label{eq:coefficients of series 2 I}
        b_{k,j} = -\kappa_{k,j} c_{0,j} - 2\kappa_{k+1,j} c_{-1,j} - \ldots - 2 \kappa_{m_j-1,j} c_{-m_j+k+1,j}
    \end{equation}
    for $k = -|\bm{n}|+1,\ldots,m_j-1$, and $b_{k,j} =0$ for $ k>m_j-1$.
    
    

    To get \eqref{eq:HPtypeI1swapped} and \eqref{eq:HPtypeI2swapped} we then necessarily need
    \begin{alignat}{3}
        \label{eq:lambda1}
        \lambda_k + \sum_{j=1}^r a_{k,j} & = 0, \qquad && k\le |\bm{m}|-1,
        \\
        \label{eq:lambda2}
        \lambda_k + \sum_{j=1}^r b_{k,j} & = 0, \qquad && k\ge |\bm{n}|-1.
    \end{alignat}
    In particular, this implies
    \begin{equation}
        \label{eq:PadeToOrthoEq I}
          \sum_{j=1}^r a_{k,j} = \sum_{j=1}^r b_{k,j},  \quad k = -|\bm{n}|+1,\dots,|\bm{m}|-1,
    \end{equation}
    which, combined with~\eqref{eq:coefficients of series 1 I},  \eqref{eq:coefficients of series 2 I} and~\eqref{eq:moments linear functional}, turns into~\eqref{eq:moprlI}. From \eqref{eq:lambda1} and \eqref{eq:lambda2}, we also see that $\Upsilon_{\bm{n};\bm{m}}$ is uniquely determined by $\bm{\Xi}_{\bm{n};\bm{m}}$. Observe that \eqref{eq:lambda1} and \eqref{eq:lambda2} is consistent with $\lambda_{k} = 0$ if $k \ge M$ (since in this case $b_{k,j}=0$ for all $j$) or $k \le -N -1$ (in this case $a_{k,j}=0$ for all $j$).
    

    Now consider the formal power series
    \begin{align}
    \label{eq:secondKindFunctionTypeI 1}
    R^{(0)}_{\bm{n};\bm{m}}(z) 
    & = \sum_{j=1}^r \Big( L_j[\Xi_{\bm{n};\bm{m},j}(w)]+ 2\sum_{k=1}^\infty z^k L_j[\Xi_{\bm{n};\bm{m},j}(w) w^{-k}] \Big) , 
    \\
    \label{eq:secondKindFunctionTypeI 2}
     R^{(\infty)}_{\bm{n};\bm{m}}(z)   & =     - \sum_{j=1}^r \Big( L_j[\Xi_{\bm{n};\bm{m},j}(w)] +
            2\sum_{k=1}^\infty z^{-k} L_j[\Xi_{\bm{n};\bm{m},j}(w) w^{k}] \Big). 
    \end{align}

    Using \eqref{eq:moprlI*}, we get 
    \begin{alignat}{3}\label{eq:R0 I}
        R_{\bm{n};\bm{m}}^{(0)}(z) & = \sum_{j=1}^r  L_j[\Xi_{\bm{n};\bm{m},j}(w)] + 
        \mathcal{O}(z^{|\bm{m}|}), \qquad && z \to 0,
        \\
        \label{eq:Rinfty I}
        R_{\bm{n};\bm{m}}^{(\infty)}(z)
        & = 
         - \sum_{j=1}^r  L_j[\Xi_{\bm{n};\bm{m},j}(w)] +
         \mathcal{O}(z^{-|\bm{n}|}), \qquad && z \to \infty.
     \end{alignat}
    The constant terms in~\eqref{eq:R0 I} are only relevant if $\bm{n}=\bm{0}$ and otherwise can be removed. Similarly, the constant terms
    in~\eqref{eq:Rinfty I} are only relevant if $\bm{m}=\bm{0}$. 

    Now denote $\widetilde{\Upsilon}_{\bm{n};\bm{m}}(z)$ to be the right-hand side of~\eqref{eq:upsilon}. It is clear that it is a Laurent polynomial in $\operatorname{span}\big\{z^k\big\}_{k = -N}^{M-1}$. Expanding algebraically the right-hand side of~\eqref{eq:upsilon} into power series, we get  
    \begin{align}
        \widetilde{\Upsilon}_{\bm{n};\bm{m}}(z)
        & =
        R^{(0)}_{\bm{n};\bm{m}}(z) - \sum_{j=1}^r \Xi_{\bm{n};\bm{m},j}(z) F_j^{(0)}(z) +\delta_{\bm{n};\bm{m}} \sum_{j = 1}^r L_j[\Xi_{\bm{n};\bm{m},j}(w)],
        \\
        \widetilde{\Upsilon}_{\bm{n};\bm{m}}(z)
        & =
        R^{(\infty)}_{\bm{n};\bm{m}}(z) - \sum_{j=1}^r \Xi_{\bm{n};\bm{m},j}(z) F^{(\infty)}_j(z) +\delta_{\bm{n};\bm{m}} \sum_{j = 1}^r L_j[\Xi_{\bm{n};\bm{m},j}(w)].
    \end{align}
    
    Combining this with~\eqref{eq:R0 I} and~\eqref{eq:Rinfty I}, we see that
\eqref{eq:HPtypeI1swapped}--\eqref{eq:HPtypeI2swapped} hold with the same $\bm{\Xi}_{\bm{n};\bm{m}}$ but with $\Upsilon_{\bm{n};\bm{m}}$ replaced by $\widetilde\Upsilon_{\bm{n};\bm{m}}$.  
    But the proof showed that this uniquely determines all the coefficients of $\widetilde\Upsilon_{\bm{n};\bm{m}}$. This shows that $\widetilde\Upsilon_{\bm{n};\bm{m}} = \Upsilon_{\bm{n};\bm{m}}$ and proves~\eqref{eq:upsilon}.
    
    Conversely, given $\bm{\Xi}_{\bm{n};\bm{m}}$ and the corresponding $\Upsilon_{\bm{n};\bm{m}}$ as in~\eqref{eq:upsilon}, define $R_{\bm{n};\bm{m}}$ as in~\eqref{eq:secondKindFunctionTypeI 1}--\eqref{eq:secondKindFunctionTypeI 2}. As above, we show that it satisfies~\eqref{eq:R0 I} and ~\eqref{eq:Rinfty I}, which then becomes ~\eqref{eq:HPtypeI1swapped}, \eqref{eq:HPtypeI2swapped}.
\end{proof}

The corresponding type I problem for the ``starred'' polynomials looks as follows. We are looking for the Laurent polynomials 
\begin{equation}
    \label{eq:HPtypeI*spanSwapped}
     \Xi^*_{\bm{n};\bm{m},j}(z) \in \operatorname{span}\big\{z^{k}\big\}_{k=-n_j+1}^{m_j},
    \qquad \Upsilon^*_{\bm{n};\bm{m}}(z) \in \operatorname{span}\big\{z^{k}\big\}_{k=-N+1}^{M},
\end{equation}
that satisfy
\begin{alignat}{3}
    \label{eq:HPtypeI*1swapped}
    & \sum_{j = 1}^r \Xi^*_{\bm{n};\bm{m},j}(z)F_j^{(0)}(z) - \Upsilon^*_{\bm{n};\bm{m}}(z) = \mathcal{O}(z^{\abs{\bm{m}}}), \qquad &&z \rightarrow 0, \\
    \label{eq:HPtypeI*2swapped}
    &\sum_{j = 1}^r \Xi^*_{\bm{n};\bm{m},j}(z)F_j^{(\infty)}(z) -\Upsilon^*_{\bm{n};\bm{m}}(z) = \mathcal{O}(z^{-\abs{\bm{n}}}),  \qquad &&z \rightarrow \infty.
\end{alignat}

\begin{thm}
    Any vector $\bm{\Xi}_{\bm{n};\bm{m}}^* = (\Xi_{\bm{n};\bm{m},1}^*,\dots,\Xi_{\bm{n};\bm{m},r}^*)$ that solves ~\eqref{eq:HPtypeI*spanSwapped}, \eqref{eq:HPtypeI*1swapped}, \eqref{eq:HPtypeI*2swapped} must satisfy the orthogonality relations \eqref{eq:moprlI*}, and $\Upsilon_{\bm{n};\bm{m}}^*$ is given by
    \begin{equation}\label{eq:upsilon star}
        \Upsilon_{\bm{n};\bm{m}}^*(z) = \sum_{j = 1}^r \Big( L_j\Big[\frac{w+z}{w-z}\big( \Xi_{\bm{n};\bm{m},j}^*(z) - \Xi_{\bm{n};\bm{m},j}^*(w) \big)\Big] + \delta_{\bm{n};\bm{m}} L_j[\Xi_{\bm{n};\bm{m},j}^*(w)] \Big),
    \end{equation}
    {where $\delta_{\bm{n};\bm{m}}$ is the same as in \eqref{eq:upsilon}.}

    Conversely, any vector $\bm{\Xi}_{\bm{n};\bm{m}}^*$ with $\Xi^*_{\bm{n};\bm{m},j} \in \operatorname{span}\big\{z^{k}\big\}_{k=-n_j+1}^{m_j}$ that satisfies  \eqref{eq:moprlI*} solves  ~\eqref{eq:HPtypeI*spanSwapped}, \eqref{eq:HPtypeI*1swapped}, \eqref{eq:HPtypeI*2swapped} together with~\eqref{eq:upsilon star}.
\end{thm}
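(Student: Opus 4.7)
The plan is to mirror the proof of the preceding (non-starred) type I theorem, with two structural adjustments: the spans in \eqref{eq:HPtypeI*spanSwapped} are shifted by $+1$ on both ends compared to \eqref{eq:HPtypeIspanSwapped}, and the sign in front of $\Upsilon^*$ in \eqref{eq:HPtypeI*1swapped}--\eqref{eq:HPtypeI*2swapped} is reversed. First, I would write
\[
\Xi^*_{\bm{n};\bm{m},j}(z) = \sum_{k=-n_j+1}^{m_j}\kappa^*_{k,j}z^k,
\qquad
\Upsilon^*_{\bm{n};\bm{m}}(z) = \sum_{k=-N+1}^{M}\lambda^*_k z^k,
\]
with the convention that coefficients vanish outside these ranges. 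Multiplying $\Xi^*_{\bm{n};\bm{m},j}$ by $F_j^{(0)}$ and by $F_j^{(\infty)}$ yields the coefficients $a^*_{k,j}$, $b^*_{k,j}$ by the obvious analogues of \eqref{eq:coefficients of series 1 I}--\eqref{eq:coefficients of series 2 I}, and matching coefficients in \eqref{eq:HPtypeI*1swapped}--\eqref{eq:HPtypeI*2swapped} gives $\lambda^*_k = \sum_j a^*_{k,j}$ for $k\le|\bm{m}|-1$ and $\lambda^*_k = \sum_j b^*_{k,j}$ for $k\ge -|\bm{n}|+1$. On the overlap $-|\bm{n}|+1\le k\le|\bm{m}|-1$ the two sides agree, and translating via \eqref{eq:moments linear functional} reproduces exactly the orthogonality relations \eqref{eq:moprlI*}. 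The same equations also show that $\Upsilon^*_{\bm{n};\bm{m}}$ is uniquely determined by $\bm{\Xi}^*_{\bm{n};\bm{m}}$.

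Next I would introduce the auxiliary formal series $R^{*(0)}_{\bm{n};\bm{m}}$ at $0$ and $R^{*(\infty)}_{\bm{n};\bm{m}}$ at $\infty$ by the direct analogues of \eqref{eq:secondKindFunctionTypeI 1}--\eqref{eq:secondKindFunctionTypeI 2} with $\Xi_j$ replaced by $\Xi^*_j$. Using the orthogonality \eqref{eq:moprlI*} and normalization \eqref{eq:moprlI*norm}, these series collapse to $\sum_j L_j[\Xi^*_j(w)] + \mathcal{O}(z^{|\bm{m}|})$ as $z\to 0$ and to $-\sum_j L_j[\Xi^*_j(w)] + \mathcal{O}(z^{-|\bm{n}|})$ as $z\to\infty$. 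Since $\tfrac{w+z}{w-z}\bigl(\Xi^*_j(z)-\Xi^*_j(w)\bigr)$ is a genuine Laurent polynomial (the zero at $w=z$ cancels the pole of the kernel), expanding via \eqref{eq:kernel} and applying $L_j$ yields
\[
\sum_{j=1}^r L_j\!\left[\tfrac{w+z}{w-z}\bigl(\Xi^*_j(z)-\Xi^*_j(w)\bigr)\right] = \sum_{j=1}^r \Xi^*_j(z) F_j^{(0)}(z) - R^{*(0)}_{\bm{n};\bm{m}}(z)
\]
as formal power series at $0$, and the analogous identity at $\infty$ with $F_j^{(\infty)}$ and $R^{*(\infty)}_{\bm{n};\bm{m}}$. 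Consequently, the right-hand side $\widetilde\Upsilon^*$ of \eqref{eq:upsilon star} satisfies $\sum_j \Xi^*_j F_j^{(0)} - \widetilde\Upsilon^* = \mathcal{O}(z^{|\bm{m}|})$ near $0$ and $\sum_j \Xi^*_j F_j^{(\infty)} - \widetilde\Upsilon^* = \mathcal{O}(z^{-|\bm{n}|})$ near $\infty$, provided the correction $\delta_{\bm{n};\bm{m}}\sum_j L_j[\Xi^*_j(w)]$ is chosen so as to cancel any constant not already killed by orthogonality. By the uniqueness from step one, $\widetilde\Upsilon^* = \Upsilon^*_{\bm{n};\bm{m}}$, which proves \eqref{eq:upsilon star}; running the computation backwards yields the converse.

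The one delicate point, mirroring the subtlety in the non-starred type I proof, is the bookkeeping of the constant terms of $R^{*(0)}$ and $R^{*(\infty)}$: orthogonality \eqref{eq:moprlI*} controls $\sum_j L_j[\Xi^*_j(w)w^{-k}]$ only for $-|\bm{n}|+1\le k\le|\bm{m}|-1$, so the constant $\sum_j L_j[\Xi^*_j(w)]$ (the $k=0$ moment) is uncontrolled precisely when $\bm{n}=\bm{0}$ or $\bm{m}=\bm{0}$. The piecewise prescription \eqref{eq:delta} for $\delta_{\bm{n};\bm{m}}$ is exactly what is needed to absorb these surviving constants in the two degenerate cases and thereby preserve the required Hermite--Pad\'{e} asymptotics at both $0$ and $\infty$ simultaneously.
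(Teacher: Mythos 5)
Your overall strategy is the correct one and is exactly what the paper intends: the theorem is stated without proof, the implicit argument being a repetition of the proof of the preceding (non-starred) type I theorem with the shifted spans and the reversed sign in front of $\Upsilon^*$. Your coefficient matching, the identification of the overlap conditions with \eqref{eq:moprlI*}, the uniqueness of $\Upsilon^*_{\bm{n};\bm{m}}$, and the kernel identity
\[
\sum_{j=1}^r L_j\Big[\tfrac{w+z}{w-z}\big(\Xi^*_{\bm{n};\bm{m},j}(z)-\Xi^*_{\bm{n};\bm{m},j}(w)\big)\Big]
=\sum_{j=1}^r\Xi^*_{\bm{n};\bm{m},j}(z)F_j^{(0)}(z)-R^{*(0)}_{\bm{n};\bm{m}}(z)
\]
are all sound.

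The gap is precisely at the step you flag as delicate and then assert without computing. Writing $C=\sum_jL_j[\Xi^*_{\bm{n};\bm{m},j}(w)]$, your identity gives $\sum_j\Xi^*_jF_j^{(0)}-\widetilde\Upsilon^*=R^{*(0)}-\delta C$ and likewise at infinity, while $R^{*(0)}=C+\mathcal{O}(z^{|\bm{m}|})$ and $R^{*(\infty)}=-C+\mathcal{O}(z^{-|\bm{n}|})$. When $\bm{n}=\bm{0}$, $\bm{m}\ne\bm{0}$ the surviving constant sits at $z\to0$ and forces $\delta=+1$; when $\bm{m}=\bm{0}$, $\bm{n}\ne\bm{0}$ it sits at $z\to\infty$ and forces $\delta=-1$. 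This is the \emph{opposite} of \eqref{eq:delta}, so the prescription you invoke is not ``exactly what is needed'': the correct constant is $-\delta_{\bm{n};\bm{m}}=\delta_{\bm{m};\bm{n}}$. A concrete check: for $r=1$, $c_0=1$, $(\bm{n};\bm{m})=(0;1)$ one has $\Xi^*_{0;1}(z)=z$ and the approximation problem forces $\Upsilon^*_{0;1}(z)=-z$, whereas the right-hand side of \eqref{eq:upsilon star} with $\delta_{0;1}=-1$ evaluates to $-2c_{-1}-z$, which is not even in the admissible span $\operatorname{span}\{z\}$ unless $c_{-1}=0$; with $\delta=+1$ it gives $-z$ as required. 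The same conclusion follows from the $\sharp$-duality of Proposition~\ref{thm:reversal vs star polynomials}, which maps $\Upsilon_{\bm{m};\bm{n}}(\cdot,\bm{L})$ to $\Upsilon^*_{\bm{n};\bm{m}}(\cdot,\bm{L}^\sharp)$ and carries the coefficient $\delta_{\bm{m};\bm{n}}=-\delta_{\bm{n};\bm{m}}$ along. So carrying out the bookkeeping you sketch actually contradicts the displayed formula \eqref{eq:upsilon star} in the two degenerate cases (the statement and your argument are both fine when $\bm{n}\ne\bm{0}$ and $\bm{m}\ne\bm{0}$, since then $\delta=0$); the proof does not close as written, and the sign of $\delta$ in the statement needs to be corrected.
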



    
  



\section{Szeg\H{o} Recurrence Relations}\label{ss:SzegoNEW}
Let us again allow $(\bm{n};\bm{m})\in\mathfrak{C}_{2r}$ as in Section~\ref{ss:normality}.
Here we establish that the Laurent polynomials $\Phi_{\bm{n};\bm{m}}$ satisfy a family of recurrence relations, which follow directly from our earlier results in~\cite{KVMOPUC}. That work was restricted to orthogonality with respect to positive measures as in~\eqref{eq:functionals given by measures} and to {\it non-Laurent} polynomials. The underlying arguments, however, extend with only minor modifications to the general case of moment functionals~\eqref{eq:moments linear functional}. This yields the following generalization.

\begin{thm}\label{thm:Szego1}
    Assuming all the $\mathfrak{C}_{2r}$-indices that appear in the equations below are normal, we have the Szeg\H{o} recurrence relations 
    \begin{align}
         \label{eq:SzegoII1}
        \vphantom{\sum_{j = 1}^{r}} 
        & \Phi_{\bm{n};\bm{m}}^*(z) = \Phi_{\bm{n}-\bm{e}_k;\bm{m}}^*(z) + \beta_{\bm{n};\bm{m}}z\Phi_{\bm{n}-\bm{e}_k;\bm{m}}(z), 
        \\ \label{eq:SzegoII2}
        & \Phi_{\bm{n};\bm{m}}(z) = \alpha_{\bm{n};\bm{m}}\Phi_{\bm{n};\bm{m}}^*(z) + \sum_{j = 1}^r \rho_{\bm{n};\bm{m},j}z\Phi_{\bm{n}-\bm{e}_j;\bm{m}}(z), 
    \end{align}
    and
    \begin{align}
        \label{eq:SzegoI1}
        & \vphantom{\sum_{j = 1}^{r}} 
        \bm{\Xi}^*_{\bm{n};\bm{m}}(z) = \bm{\Xi}^*_{\bm{n}+\bm{e}_k;\bm{m}}(z) -\alpha_{\bm{n};\bm{m}}z\bm{\Xi}_{\bm{n}+\bm{e}_k;\bm{m}}(z),
        \\
        \label{eq:SzegoI2}
        & \bm{{\Xi}}_{\bm{n};\bm{m}}(z) = -\beta_{\bm{n};\bm{m}}\bm{\Xi}^*_{\bm{n};\bm{m}}(z) + \sum_{j = 1}^{r}\rho_{\bm{n};\bm{m},j}z\bm{\Xi}_{\bm{n}+\bm{e}_j;\bm{m}}(z),
\end{align}
for some complex numbers $\rho_{\bm{n};\bm{m},j}$.
\end{thm}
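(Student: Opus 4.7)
The plan is to establish each of the four recurrences by the same three-step template: (i) form the appropriate linear combination of polynomials at ``adjacent'' multi-indices and verify that it has the correct support in $z$, (ii) show that after dividing by $z$ it satisfies exactly the orthogonality relations of the target polynomial, and (iii) invoke the uniqueness clause of Proposition~\ref{prop:normality} to identify the two polynomials. This procedure simultaneously produces the stated recurrences and fixes the coefficients.

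For~\eqref{eq:SzegoII1}, I would set $D(z):=\Phi^*_{\bm{n};\bm{m}}(z)-\Phi^*_{\bm{n}-\bm{e}_k;\bm{m}}(z)$. Both terms are normalized so that the $z^{-\abs{\bm{m}}}$-coefficient equals $1$, hence these cancel and $D\in z\cdot\operatorname{span}\{z^{-\abs{\bm{m}}},\dots,z^{\abs{\bm{n}}-1}\}$, with $z^{\abs{\bm{n}}}$-coefficient equal to $\beta_{\bm{n};\bm{m}}$. Writing $D=zQ$ and shifting the index $k\mapsto k+1$ in the orthogonality relations~\eqref{eq:moprlII*} for the two starred polynomials, one checks that $Q$ satisfies precisely the orthogonality~\eqref{eq:moprlII} of $\Phi_{\bm{n}-\bm{e}_k;\bm{m}}$, with the matching leading coefficient. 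Proposition~\ref{prop:normality}(i) then gives $Q=\beta_{\bm{n};\bm{m}}\Phi_{\bm{n}-\bm{e}_k;\bm{m}}$.

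For~\eqref{eq:SzegoII2}, I would form $R(z):=\Phi_{\bm{n};\bm{m}}(z)-\alpha_{\bm{n};\bm{m}}\Phi^*_{\bm{n};\bm{m}}(z)$; the choice of $\alpha_{\bm{n};\bm{m}}$ ensures that the $z^{-\abs{\bm{m}}}$-coefficient vanishes, so $R/z\in\operatorname{span}\{z^{-\abs{\bm{m}}},\dots,z^{\abs{\bm{n}}-1}\}$. Intersecting the shifts of~\eqref{eq:moprlII} and~\eqref{eq:moprlII*} shows that $R/z$ annihilates every $L_j[\,\cdot\, w^{-k}]$ with $-m_j\le k\le n_j-2$, i.e.\ exactly $r$ of the full type~II conditions on $\Phi_{\bm{n};\bm{m}}$ are dropped (the ones indexed by $k=n_j-1$). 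A dimension count places $R/z$ in an $r$-dimensional subspace, which contains each of the polynomials $\Phi_{\bm{n}-\bm{e}_j;\bm{m}}$, $j=1,\dots,r$; a standard linear-independence argument based on the assumed normalities shows these form a basis, and the coordinates of $R/z$ in this basis define the $\rho_{\bm{n};\bm{m},j}$.

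The type~I identities~\eqref{eq:SzegoI1} and~\eqref{eq:SzegoI2} are established by the same template applied componentwise to the vectors $\bm{\Xi}^*$ and $\bm{\Xi}$, with uniqueness now provided by Proposition~\ref{prop:normality}(iii)--(iv). The main obstacle is to confirm that the scalars produced this way coincide with the $\alpha_{\bm{n};\bm{m}}$ and $\beta_{\bm{n};\bm{m}}$ already attached to $\Phi_{\bm{n};\bm{m}}$ and $\Phi^*_{\bm{n};\bm{m}}$. I would resolve this via a biorthogonality identity: evaluating $\sum_{j=1}^{r}L_j[\Phi_{\bm{n};\bm{m}}(w)\,\Xi^*_{\bm{n};\bm{m},j}(w)]$ by expanding $\Xi^*_{\bm{n};\bm{m},j}$ monomial-by-monomial and using the type~II orthogonality of $\Phi_{\bm{n};\bm{m}}$ forces the sum to vanish, while expanding $\Phi_{\bm{n};\bm{m}}$ and using the type~I orthogonality of $\bm{\Xi}^*_{\bm{n};\bm{m}}$ produces $\alpha_{\bm{n};\bm{m}}+\sum_j L_j[\Xi^*_{\bm{n};\bm{m},j}(w)w^{\abs{\bm{n}}}]$. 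Equating the two expressions pins down the unknown scalar in~\eqref{eq:SzegoI1} as $\alpha_{\bm{n};\bm{m}}$, and the analogous pairing between $\Phi^*_{\bm{n};\bm{m}}$ and $\bm{\Xi}_{\bm{n};\bm{m}}$ does the same for $\beta_{\bm{n};\bm{m}}$ in~\eqref{eq:SzegoI2}.
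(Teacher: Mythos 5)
Your proposal is correct and takes essentially the same route as the paper: the paper establishes these recurrences by exactly your template (match the $z$-support, check the orthogonality relations of the combination, invoke normality/uniqueness via Proposition~\ref{prop:normality}), delegating the details to its reference \cite{KVMOPUC}, and your biorthogonality pairings for pinning down $\alpha_{\bm{n};\bm{m}}$ and $\beta_{\bm{n};\bm{m}}$ are precisely the identities \eqref{eq:biorthogonality 1}--\eqref{eq:biorthogonality 2} of Proposition~\ref{prop:general hermite pade measures type I}. The one detail you leave implicit is that the coefficients of $z\bm{\Xi}_{\bm{n}+\bm{e}_j;\bm{m}}$ produced in \eqref{eq:SzegoI2} must be identified with the $\rho_{\bm{n};\bm{m},j}$ already defined by \eqref{eq:SzegoII2}; this follows from your same machinery by comparing the extreme $z^{-n_j}$-coefficients of the $j$-th components and using \eqref{eq:kappa} together with \eqref{eq:rho}.
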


If we keep the first index constant, then we obtain another family of relations.

\begin{thm}\label{thm:Szego2}
Assuming the indices that appear in the equations below are normal, we have the Szeg\H{o} recurrence relations 
\begin{align}
         \label{eq:SzegoII4}
        \vphantom{\sum_{j = 1}^{r}} 
        & \Phi_{\bm{n};\bm{m}}(z) = \Phi_{\bm{n},\bm{m}-\bm{e}_k}(z) + \alpha_{\bm{n};\bm{m}}z^{-1}\Phi^*_{\bm{n},\bm{m}-\bm{e}_k}(z), 
        \\ \label{eq:SzegoII5}
        & \Phi^*_{\bm{n};\bm{m}}(z) = \beta_{\bm{n};\bm{m}}\Phi_{\bm{n};\bm{m}}(z) + \sum_{j = 1}^r \sigma_{\bm{n};\bm{m},j}z^{-1} \Phi_{\bm{n},\bm{m}-\bm{e}_j}^*(z), 
    \end{align}
    and
    \begin{align}
        \label{eq:SzegoI4}
        & \vphantom{\sum_{j = 1}^{r}} 
        \bm{\Xi}_{\bm{n};\bm{m}}(z) = \bm{\Xi}_{\bm{n};\bm{m}+\bm{e}_k}(z) -\beta_{\bm{n};\bm{m}}z^{-1} \bm{\Xi}^*_{\bm{n};\bm{m}+\bm{e}_k}(z),
        \\
        \label{eq:SzegoI5}
        & \bm{{\Xi}}^*_{\bm{n};\bm{m}}(z) = -\alpha_{\bm{n};\bm{m}}\bm{\Xi}_{\bm{n};\bm{m}}(z) + \sum_{j = 1}^{r}\sigma_{\bm{n};\bm{m},j}z^{-1} \bm{\Xi}^*_{\bm{n};\bm{m}+\bm{e}_j}(z),
\end{align}
for some complex numbers $\sigma_{\bm{n};\bm{m},j}$.
\end{thm}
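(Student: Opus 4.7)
The plan is to derive Theorem~\ref{thm:Szego2} directly from Theorem~\ref{thm:Szego1} via the $\sharp$-duality of Proposition~\ref{thm:reversal vs star polynomials}. That proposition interchanges $\Phi\leftrightarrow\Phi^*$ and $\bm{\Xi}\leftrightarrow\bm{\Xi}^*$, swaps the multi-indices $(\bm{n};\bm{m})\leftrightarrow(\bm{m};\bm{n})$, replaces $\bm{L}$ by $\bm{L}^\sharp$, and preserves normality. So all the normality hypotheses of Theorem~\ref{thm:Szego2} translate into normality hypotheses for the swapped indices with respect to $\bm{L}^\sharp$, which is precisely the setting in which Theorem~\ref{thm:Szego1} is available.

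Concretely, I would start by writing \eqref{eq:SzegoII1} for the system $\bm{L}^\sharp$ at indices $(\bm{m};\bm{n})$ and $(\bm{m}-\bm{e}_k;\bm{n})$, namely
\begin{equation*}
    \Phi_{\bm{m};\bm{n}}^*(z,\bm{L}^\sharp)=\Phi_{\bm{m}-\bm{e}_k;\bm{n}}^*(z,\bm{L}^\sharp)+\beta_{\bm{m};\bm{n}}(\bm{L}^\sharp)\,z\,\Phi_{\bm{m}-\bm{e}_k;\bm{n}}(z,\bm{L}^\sharp).
\end{equation*}
Using \eqref{eq:symm1}, \eqref{eq:symm2}, and \eqref{eq:symm5}, the same equation reads
\begin{equation*}
    \Phi_{\bm{n};\bm{m}}(z,\bm{L})^\sharp=\Phi_{\bm{n};\bm{m}-\bm{e}_k}(z,\bm{L})^\sharp+\bar\alpha_{\bm{n};\bm{m}}(\bm{L})\,z\,\Phi_{\bm{n};\bm{m}-\bm{e}_k}^*(z,\bm{L})^\sharp.
\end{equation*}
Applying $\sharp$ to both sides, together with the easy identities $(zf(z))^\sharp=z^{-1}f^\sharp(z)$ and $(\bar c\,f)^\sharp=c\,f^\sharp$, collapses this to exactly \eqref{eq:SzegoII4}. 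The same procedure, starting from \eqref{eq:SzegoII2} in the dual picture and invoking \eqref{eq:symm2} and \eqref{eq:symm6}, yields \eqref{eq:SzegoII5} with new coefficients $\sigma_{\bm{n};\bm{m},j}:=\overline{\rho_{\bm{m};\bm{n},j}(\bm{L}^\sharp)}$.

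The type~I recurrences \eqref{eq:SzegoI4}--\eqref{eq:SzegoI5} are obtained in exactly the same way from \eqref{eq:SzegoI1}--\eqref{eq:SzegoI2} applied to $\bm{L}^\sharp$ at $(\bm{m};\bm{n})$ and $(\bm{m}+\bm{e}_k;\bm{n})$, using \eqref{eq:symm3}--\eqref{eq:symm4} to translate $\bm{\Xi}$ and $\bm{\Xi}^*$, and \eqref{eq:symm5}--\eqref{eq:symm6} for the recurrence coefficients. The $\sigma_{\bm{n};\bm{m},j}$ obtained from the type~II and type~I derivations will coincide, since in both cases they are the conjugates of the $\rho$-coefficients in the dual picture.

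The only real obstacle is bookkeeping under the $\sharp$-involution: one must track that the factor $\bar\alpha\,z$ appearing in the dual recurrence becomes $\alpha\,z^{-1}$ after $\sharp$, thereby producing the negative-power shifts $z^{-1}\Phi_{\bm{n};\bm{m}-\bm{e}_k}^*$ and $z^{-1}\bm{\Xi}_{\bm{n};\bm{m}+\bm{e}_k}^*$ characteristic of Theorem~\ref{thm:Szego2}. (Alternatively, a direct proof could be given by checking that $\Phi_{\bm{n};\bm{m}}(z)-\Phi_{\bm{n};\bm{m}-\bm{e}_k}(z)-\alpha_{\bm{n};\bm{m}}z^{-1}\Phi_{\bm{n};\bm{m}-\bm{e}_k}^*(z)$ lies in $\operatorname{span}\{z^{-|\bm{m}|+1},\dots,z^{|\bm{n}|-1}\}$ and inherits the orthogonality conditions of $(\bm{n};\bm{m}-\bm{e}_k)$, hence vanishes by a dimension count; but the duality argument is substantially more economical and reuses results already established in the paper.)
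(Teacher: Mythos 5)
Your proposal is correct and is essentially the paper's own argument: the paper derives \eqref{eq:SzegoII4}--\eqref{eq:SzegoII5} (and implicitly \eqref{eq:SzegoI4}--\eqref{eq:SzegoI5}) from Theorem~\ref{thm:Szego1} precisely via the $\sharp$-duality of Proposition~\ref{thm:reversal vs star polynomials}, exactly as you do, with the identification $\sigma_{\bm{n};\bm{m},j}=\overline{\rho_{\bm{m};\bm{n},j}(\bm{L}^\sharp)}$ matching Remark~\ref{rem:reversal}. You have merely written out the bookkeeping that the paper leaves implicit, and your computations check out.
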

\begin{rem}
     One can formally put $\Phi_{\bm{n}-\bm{e}_j;\bm{m}} = 0$ and $\rho_{\bm{n};\bm{m},j} = 0$ when $(\bm{n}-\bm{e}_j;\bm{m})\notin \mathfrak{C}_{2r}$. With such a choice, equality~\eqref{eq:SzegoII2} 
     holds even if some of the indices $(\bm{n}-\bm{e}_j;\bm{m})$ 
     fall outside of $\mathfrak{C}_{2r}$. Similar modifications apply to \eqref{eq:SzegoII5}. 
\end{rem}
\begin{proof}[Proof of Theorems~\ref{thm:Szego1} and~\ref{thm:Szego2}]
        The proof of recurrences~\eqref{eq:SzegoII1}and \eqref{eq:SzegoII2}
        goes through the exact same arguments as~\cite[Thm 3.1 and Cor 3.3]{KVMOPUC}. 

        Relations ~\eqref{eq:SzegoII4} and \eqref{eq:SzegoII5}
        follow then from Theorem~\ref{thm:reversal vs star polynomials}. 
        
        The proof of recurrences~\eqref{eq:SzegoI1},and \eqref{eq:SzegoI2}
        follow similar lines to the arguments in~\cite[Thm 4.3]{KVMOPUC} with modifications related to the discussion in Remarks~\ref{rem:annoying1} and \ref{rem:annoying2}. Proposition~\ref{prop:general hermite pade measures type I} below summarizes the main modifications that are needed in order to adapt the proofs from ~\cite[Sect. 4]{KVMOPUC}.
\end{proof}

\begin{prop}\label{prop:general hermite pade measures type I}Assume the indices that appear in the respective relations below are normal. 
    \noindent\hfill
    \begin{itemize}
        \item[(i)] Let $\kappa_{\bm{n};\bm{m},j}$ be the $z^{-n_j}$ coefficient of $\Xi_{\bm{n};\bm{m},j}(z)$ and let $\ell_{\bm{n};\bm{m},j}$ be the $z^{m_j}$ coefficient of $\Xi^*_{\bm{n};\bm{m},j}(z)$. Then
        \begin{align}
            \label{eq:kappa}
            \kappa_{\bm{n}+\bm{e}_j;\bm{m},j} & = \frac{1}{L_j[\Phi_{\bm{n};\bm{m}}(w)w^{-n_j}]},
            \\
            \label{eq:l}
            \ell_{\bm{n}+\bm{e}_j;\bm{m},j} &= \frac{1}{L_j[\Phi^*_{\bm{n};\bm{m}}(w)w^{m_j}]}.
        \end{align}
        

        \item[(ii)] We have the relations
        \begin{alignat}{1}
        \label{eq:rho}
            & \rho_{\bm{n};\bm{m},j}= \frac{L_j[\Phi_{\bm{n};\bm{m}}(w)w^{-n_j}]}{L_j[\Phi_{\bm{n}-\bm{e}_j;\bm{m}}(w)w^{-n_j+1}]},
            \\
            \label{eq:sigma}
            & \sigma_{\bm{n};\bm{m},j}= \frac{L_j[\Phi^*_{\bm{n};\bm{m}}(w)w^{m_j}]}{L_j[\Phi^*_{\bm{n};\bm{m}-\bm{e}_j}(w)w^{m_j-1}]}.
        \end{alignat}
        Furthermore, $\rho_{\bm{n};\bm{m},j} \neq 0$ if and only if $(\bm{n}+\bm{e}_j;\bm{m})$ is normal. Similarly, $\sigma_{\bm{n};\bm{m},j} \neq 0$ if and only if $(\bm{n};\bm{m}+\bm{e}_j)$ is normal.
        
        \item[(iii)] The following relations hold:
        \begin{alignat}{3}
            \label{eq:biorthogonality 1}
            & \sum_{j = 1}^r L_j \big[ \Xi_{\bm{n};\bm{m}{,j}}(w)w^{-\abs{\bm{m}}} \big] = -{\beta}_{\bm{n};\bm{m}}, 
            \\ \label{eq:biorthogonality 2}
            & \sum_{j = 1}^r L_j \big[ \Xi_{\bm{n};\bm{m}{,j}}^*(w) w^{\abs{\bm{n}}} \big] = -{\alpha}_{\bm{n};\bm{m}}.
        \end{alignat}
    \end{itemize}
\end{prop}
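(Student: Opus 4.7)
The plan is to derive every identity in (i)--(iii) from the ``two-way evaluation'' of a bilinear form $\sum_{i=1}^r L_i[P(w) X_i(w)]$, where $P$ is a carefully chosen test Laurent polynomial and $X$ is either $\bm{\Xi}_{\cdot;\cdot}$ or $\bm{\Xi}^*_{\cdot;\cdot}$. On the \emph{spectral} side, I expand $P(w)$ in monomials and use the summed orthogonality/normalization conditions on $\sum_i L_i[X_i(w) w^s]$; on the \emph{support} side, I pair $P$ against each $L_i$ individually and use the $L_i$-orthogonality of $P$ to kill all but a handful of monomials in the support of $X_i$. Equating the two expressions isolates the sought-after coefficient.

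I will begin with part (iii), the simplest instance. Taking $P = \Phi^*_{\bm{n};\bm{m}}$ and pairing against $\bm{\Xi}_{\bm{n};\bm{m}}$: the $L_i$-orthogonality range of $\Phi^*_{\bm{n};\bm{m}}$ exactly matches the support of $\Xi_{\bm{n};\bm{m},i}$, so the support side yields $0$; on the spectral side, expanding $\Phi^*_{\bm{n};\bm{m}}(w) = \sum_{s=-|\bm{m}|}^{|\bm{n}|} b_s w^s$ with $b_{-|\bm{m}|} = 1$ and $b_{|\bm{n}|} = \beta_{\bm{n};\bm{m}}$, the interior monomials fall inside the orthogonality range of $\bm{\Xi}_{\bm{n};\bm{m}}$ and vanish, while $w^{|\bm{n}|}$ hits the normalization point (contributing $\beta_{\bm{n};\bm{m}}$) and $w^{-|\bm{m}|}$ lies outside both zones (contributing the unknown $\sum_i L_i[\Xi_i w^{-|\bm{m}|}]$). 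Equating gives \eqref{eq:biorthogonality 1}, and the parallel pairing of $\Phi_{\bm{n};\bm{m}}$ against $\bm{\Xi}^*_{\bm{n};\bm{m}}$ yields \eqref{eq:biorthogonality 2}. Part (i) will then be obtained via a \emph{monomial shift}: for \eqref{eq:kappa} I pair $P(w) = w \Phi_{\bm{n};\bm{m}}(w)$ with $\bm{\Xi}_{\bm{n}+\bm{e}_j;\bm{m}}$, which aligns the $L_i$-orthogonality range of $\Phi_{\bm{n};\bm{m}}$ with the support of $w\Xi_{\bm{n}+\bm{e}_j;\bm{m},i}$ precisely for $i \neq j$ (giving $0$), while for $i = j$ the support drops one position lower, producing the lone surviving term $\kappa_{\bm{n}+\bm{e}_j;\bm{m},j} L_j[\Phi_{\bm{n};\bm{m}}(w) w^{-n_j}]$; on the spectral side, the leading monomial $w^{|\bm{n}|+1}$ of $w\Phi_{\bm{n};\bm{m}}$ sits exactly on the normalization point of $\bm{\Xi}_{\bm{n}+\bm{e}_j;\bm{m}}$, so the total equals $1$. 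Formula \eqref{eq:l} comes by the dual pairing of $w^{-1}\Phi^*_{\bm{n};\bm{m}}$ against the corresponding $\bm{\Xi}^*$, or equivalently by the $\sharp$-symmetries of Proposition~\ref{thm:reversal vs star polynomials}.

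For part (ii), I apply $L_i[\,\cdot\, w^{-n_i}]$ to both sides of the Szeg\H{o} recurrence \eqref{eq:SzegoII2}. The $\alpha_{\bm{n};\bm{m}}\Phi^*_{\bm{n};\bm{m}}$ term vanishes by $\Phi^*$-orthogonality (since $-n_i \in [-m_i+1, n_i]$), and in the sum each $k \neq i$ contributes $L_i[\Phi_{\bm{n}-\bm{e}_k;\bm{m}}(w) w^{-n_i+1}] = 0$ because $-n_i+1$ still lies in the $L_i$-orthogonality range of $\Phi_{\bm{n}-\bm{e}_k;\bm{m}}$ (which shrinks to exclude this point only when $k = i$). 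Reading off the lone surviving $k = i$ term gives \eqref{eq:rho}; \eqref{eq:sigma} follows identically from \eqref{eq:SzegoII5} under $L_j[\,\cdot\,w^{m_j}]$. The normality claim is then an immediate consequence of combining \eqref{eq:rho} with \eqref{eq:kappa}: $\rho_{\bm{n};\bm{m},j} \neq 0$ iff $L_j[\Phi_{\bm{n};\bm{m}}(w) w^{-n_j}] \neq 0$ iff the leading coefficient $\kappa_{\bm{n}+\bm{e}_j;\bm{m},j}$ is defined and nonzero, which is equivalent to normality of $(\bm{n}+\bm{e}_j;\bm{m})$. The main bookkeeping hazard throughout will be correctly classifying, in each pairing, which monomials of $P$ hit the interior orthogonality range of $\bm{\Xi}$ (giving $0$), which hit the single normalization point (giving $1$), and which sit outside both zones and must carry the unknown moment matched against the support-side expression.
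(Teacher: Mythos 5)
The paper does not actually write out a proof of this proposition: it is presented as a summary of ``the main modifications needed to adapt the proofs from [KVMOPUC, Sect.~4]''. Your bilinear--pairing strategy (evaluating $\sum_i L_i[P(w)X_i(w)]$ once against the support of $X$ and once against the monomial expansion of $P$) is exactly the standard mechanism behind those arguments, and your derivation of \eqref{eq:rho}--\eqref{eq:sigma} by applying $L_j[\,\cdot\,w^{-n_j}]$, resp.\ $L_j[\,\cdot\,w^{m_j}]$, to the recurrences \eqref{eq:SzegoII2} and \eqref{eq:SzegoII5} is the natural (and correct) computation, since those recurrences are established independently and \emph{define} $\rho$ and $\sigma$. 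I checked the window bookkeeping in your pairings for \eqref{eq:biorthogonality 1}--\eqref{eq:biorthogonality 2} and for \eqref{eq:kappa}; it is right in each case. Two points need attention.

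First, \eqref{eq:l}. Both of the routes you propose deliver
\[
\ell_{\bm{n};\bm{m}+\bm{e}_j,j}=\frac{1}{L_j[\Phi^*_{\bm{n};\bm{m}}(w)w^{m_j}]},
\]
i.e.\ the shift lands on $\bm{m}$, not on $\bm{n}$. Indeed, the $\sharp$-dual of \eqref{eq:kappa} uses $\bm{\Xi}_{\bm{m}+\bm{e}_j;\bm{n}}(z,\bm{L}^\sharp)=\bm{\Xi}^*_{\bm{n};\bm{m}+\bm{e}_j}(z,\bm{L})^\sharp$ from \eqref{eq:symm3}, so it produces $\ell_{\bm{n};\bm{m}+\bm{e}_j,j}$; and the direct pairing that isolates a \emph{top} coefficient of a $\Xi^*$-component must use a test polynomial whose $L_j$-kill zone is shortened at the top, namely $w^{-1}\Phi^*_{\bm{n};\bm{m}}$ paired against $\bm{\Xi}^*_{\bm{n};\bm{m}+\bm{e}_j}$ (pairing it against $\bm{\Xi}^*_{\bm{n}+\bm{e}_j;\bm{m}}$ instead isolates the \emph{bottom} coefficient at $z^{-n_j}$, not $\ell$). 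The statement as printed, with $\ell_{\bm{n}+\bm{e}_j;\bm{m},j}$ on the left, is not what your argument proves; the $\bm{m}$-shifted version is the one consistent with \eqref{eq:sigma}, with \eqref{eq:norm2}, and with the $\bm{m}$-direction recurrences, so the printed index appears to be an error, but you cannot simply assert that your computation ``gives \eqref{eq:l}'' without flagging and resolving this discrepancy.

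Second, the normality equivalence in (ii). Your chain ``$\rho_{\bm{n};\bm{m},j}\neq0$ iff $L_j[\Phi_{\bm{n};\bm{m}}(w)w^{-n_j}]\neq0$ iff $\kappa_{\bm{n}+\bm{e}_j;\bm{m},j}$ is defined and nonzero iff $(\bm{n}+\bm{e}_j;\bm{m})$ is normal'' only closes in one direction: \eqref{eq:kappa} presupposes normality of $(\bm{n}+\bm{e}_j;\bm{m})$, so it shows normal $\Rightarrow$ nonvanishing, but not the converse. You need a separate argument that $L_j[\Phi_{\bm{n};\bm{m}}(w)w^{-n_j}]=0$ forces $\det T_{\bm{n}+\bm{e}_j;\bm{m}}=0$ --- either observe that $\Phi_{\bm{n};\bm{m}}$ would then be a nontrivial solution of the homogeneous orthogonality system for the index $(\bm{n}+\bm{e}_j;\bm{m})$ with vanishing $z^{|\bm{n}|+1}$-coefficient, contradicting Proposition~\ref{prop:normality}(i), or invoke the determinantal identity $L_j[\Phi_{\bm{n};\bm{m}}(w)w^{-n_j}]=\pm\det T_{\bm{n}+\bm{e}_j;\bm{m}}/\det T_{\bm{n};\bm{m}}$ of \eqref{eq:norm1}. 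With these two repairs the proof is complete.
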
        
\begin{rem}
    Note that the right hand side of \eqref{eq:rho} only requires normality of $(\bm{n};\bm{m})$ and $(\bm{n}-\bm{e}_j;\bm{m})$. It is possible to derive \eqref{eq:SzegoII2} without appealing to normality of indices of the form $(\bm{n}-\bm{e}_{\ell};\bm{m})$, $\ell\ne j$. In that case any choice of polynomials $\Phi_{\bm{n}-\bm{e}_{\ell};\bm{m}}$,  $\ell\ne j$, will work for the relation to hold. When $(\bm{n}-\bm{e}_j;\bm{m})$ is normal for a given $j$ then $\rho_{\bm{n};\bm{m},j}$ is uniquely defined and can be computed to \eqref{eq:rho}. Similarly, $\sigma_{\bm{n};\bm{m}}$ is defined as long as $(\bm{n};\bm{m})$ and $(\bm{n};\bm{m}-\bm{e}_j)$ are normal. 
\end{rem}

\begin{rem}\label{rem:reversal}
    It is easy to see that in the setting of 
         Theorem~\ref{thm:reversal vs star polynomials}, we have
         \begin{equation}
             \rho_{\bm{n};\bm{m},j}(\bm{L}^\sharp) = \bar\sigma_{\bm{m};\bm{n},j}(\bm{L}), \qquad \sigma_{\bm{n};\bm{m},j}(\bm{L}^\sharp) = \bar\rho_{\bm{m};\bm{n},j}(\bm{L}),
        \end{equation}
        and if each $L_j$ is associated with a probability measure then
        \begin{equation}
             \rho_{\bm{n};\bm{m},j}= \bar\sigma_{\bm{m};\bm{n},j} .
        \end{equation}
\end{rem}

\section{Compatibility relations}\label{ss:CC}

\begin{prop}\label{pr:compPoly1}
Assuming the indices that appear in the equations below are normal, and $k \neq \ell$, then there is a complex number $\gamma_{\bm{n};\bm{m}}^{k\ell}$ such that
\begin{align}\label{eq:CCII}
    & \Phi_{\bm{n}+\bm{e}_k;\bm{m}}-\Phi_{\bm{n}+\bm{e}_\ell;\bm{m}} = \gamma_{\bm{n};\bm{m}}^{k\ell}\Phi_{\bm{n};\bm{m}},
\\
\label{eq:CCI}
    & \bm\Xi_{\bm{n}-\bm{e}_k;\bm{m}}-\bm\Xi_{\bm{n}-\bm{e}_\ell;\bm{m}} = \gamma_{\bm{n}-\bm{e}_k-\bm{e}_\ell;\bm{m}}^{k\ell}\bm\Xi_{\bm{n};\bm{m}}.
\end{align}

Similarly, for some complex number $\eta_{\bm{n};\bm{m}}^{k\ell}$,
\begin{align}\label{eq:CCII2}
    & \Phi^*_{\bm{n};\bm{m}+\bm{e}_k}-\Phi^*_{\bm{n};\bm{m}+\bm{e}_\ell} = \eta_{\bm{n};\bm{m}}^{k\ell}\Phi^*_{\bm{n};\bm{m}},
    \\
    \label{eq:CCIInew}
    & \bm\Xi^*_{\bm{n};\bm{m}-\bm{e}_k}-\bm\Xi^*_{\bm{n};\bm{m}-\bm{e}_\ell} = \eta_{\bm{n};\bm{m}-\bm{e}_k-\bm{e}_\ell}^{k\ell}\bm\Xi^*_{\bm{n};\bm{m}}.
\end{align}
\end{prop}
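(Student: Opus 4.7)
The plan is to establish each relation via the uniqueness principle from Proposition~\ref{prop:normality}, and then to match the proportionality scalars using the Szeg\H{o} recurrences of Section~\ref{ss:SzegoNEW}.

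For (\ref{eq:CCII}) and (\ref{eq:CCII2}) the argument is direct. Both $\Phi_{\bm{n}+\bm{e}_k;\bm{m}}$ and $\Phi_{\bm{n}+\bm{e}_\ell;\bm{m}}$ have the form $z^{\abs{\bm{n}}+1}+\cdots+(\cdot)z^{-\abs{\bm{m}}}$, so the leading monomials cancel and the difference lies in $\operatorname{span}\{z^{-\abs{\bm{m}}},\ldots,z^{\abs{\bm{n}}}\}$. Intersecting the orthogonality conditions of Proposition~\ref{prop:normality}(i) for the two indices (for $k\ne\ell$ the extra condition at $i=n_k$ is imposed only by one term and similarly for $i=n_\ell$) yields precisely the orthogonality characterizing $\Phi_{\bm{n};\bm{m}}$, so by normality the difference is a scalar multiple of $\Phi_{\bm{n};\bm{m}}$; define $\gamma_{\bm{n};\bm{m}}^{k\ell}$ as this scalar. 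The analogous argument using part~(ii) yields (\ref{eq:CCII2}) with scalar $\eta_{\bm{n};\bm{m}}^{k\ell}$. For (\ref{eq:CCI}), each component of $\bm{\Xi}_{\bm{n}-\bm{e}_k;\bm{m}}-\bm{\Xi}_{\bm{n}-\bm{e}_\ell;\bm{m}}$ lies in $\operatorname{span}\{z^i\}_{i=-n_j}^{m_j-1}$, and the common normalization $\sum_j L_j[\cdot\, w^{\abs{\bm{n}}-1}]=1$ cancels under subtraction. By Proposition~\ref{prop:normality}(iii) and normality the difference therefore equals $c\,\bm{\Xi}_{\bm{n};\bm{m}}$ for a unique scalar $c$; analogously (\ref{eq:CCIInew}) gives $c^{*}\bm{\Xi}^{*}_{\bm{n};\bm{m}}$.

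The main obstacle is identifying $c=\gamma_{\bm{n}-\bm{e}_k-\bm{e}_\ell;\bm{m}}^{k\ell}$. I plan to apply the Szeg\H{o} recurrence~\eqref{eq:SzegoI1} twice at $(\bm{n}-\bm{e}_k-\bm{e}_\ell;\bm{m})$ (with directions $k$ and $\ell$) and subtract to eliminate $\bm{\Xi}^{*}_{\bm{n}-\bm{e}_k-\bm{e}_\ell;\bm{m}}$, obtaining
\[
\bm{\Xi}^{*}_{\bm{n}-\bm{e}_\ell;\bm{m}}-\bm{\Xi}^{*}_{\bm{n}-\bm{e}_k;\bm{m}}=\alpha_{\bm{n}-\bm{e}_k-\bm{e}_\ell;\bm{m}}\,z\,\big(\bm{\Xi}_{\bm{n}-\bm{e}_\ell;\bm{m}}-\bm{\Xi}_{\bm{n}-\bm{e}_k;\bm{m}}\big),
\]
and apply~\eqref{eq:SzegoI1} also at $(\bm{n}-\bm{e}_k;\bm{m})$ with direction $k$ and at $(\bm{n}-\bm{e}_\ell;\bm{m})$ with direction $\ell$, subtracted, to get
\[
\bm{\Xi}^{*}_{\bm{n}-\bm{e}_\ell;\bm{m}}-\bm{\Xi}^{*}_{\bm{n}-\bm{e}_k;\bm{m}}=\big(\alpha_{\bm{n}-\bm{e}_k;\bm{m}}-\alpha_{\bm{n}-\bm{e}_\ell;\bm{m}}\big)\,z\,\bm{\Xi}_{\bm{n};\bm{m}}.
\]
Equating the two right-hand sides and inserting $\bm{\Xi}_{\bm{n}-\bm{e}_k;\bm{m}}-\bm{\Xi}_{\bm{n}-\bm{e}_\ell;\bm{m}}=c\,\bm{\Xi}_{\bm{n};\bm{m}}$ gives $\alpha_{\bm{n}-\bm{e}_k-\bm{e}_\ell;\bm{m}}\cdot c=\alpha_{\bm{n}-\bm{e}_\ell;\bm{m}}-\alpha_{\bm{n}-\bm{e}_k;\bm{m}}$, which is the same identity obtained by reading off the $z^{-\abs{\bm{m}}}$-coefficient of (\ref{eq:CCII}) at the shifted index $(\bm{n}-\bm{e}_k-\bm{e}_\ell;\bm{m})$. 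When $\alpha_{\bm{n}-\bm{e}_k-\bm{e}_\ell;\bm{m}}\ne 0$ this forces $c=\gamma_{\bm{n}-\bm{e}_k-\bm{e}_\ell;\bm{m}}^{k\ell}$. The degenerate case is handled by the parallel argument based on~\eqref{eq:SzegoII1} applied to the $\Phi^{*}$'s (together with pairing against $\Phi^{*}_{\bm{n}-\bm{e}_k-\bm{e}_\ell;\bm{m}}$ and invoking~\eqref{eq:biorthogonality 1}), which produces the equivalent identity $c\,\beta_{\bm{n};\bm{m}}=\beta_{\bm{n}-\bm{e}_k;\bm{m}}-\beta_{\bm{n}-\bm{e}_\ell;\bm{m}}$ and pins down $c$ whenever $\beta_{\bm{n};\bm{m}}\ne 0$. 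The identification $c^{*}=\eta_{\bm{n};\bm{m}-\bm{e}_k-\bm{e}_\ell}^{k\ell}$ in (\ref{eq:CCIInew}) follows by the completely analogous argument based on~\eqref{eq:SzegoI4} (with $\beta$'s in place of $\alpha$'s), or equivalently by duality through Proposition~\ref{thm:reversal vs star polynomials}.
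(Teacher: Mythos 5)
Your first paragraph is exactly the paper's proof: the paper argues in one line that the difference $\Phi_{\bm{n}+\bm{e}_k;\bm{m}}-\Phi_{\bm{n}+\bm{e}_\ell;\bm{m}}$ has the right degree restrictions and satisfies the orthogonality conditions of $(\bm{n};\bm{m})$, hence is a scalar multiple of $\Phi_{\bm{n};\bm{m}}$ by normality, ``and same arguments prove the other relations.'' Your version of this, including the observation that the type I normalizations at the exponent $\abs{\bm{n}}-1$ cancel so that the difference solves the homogeneous system \eqref{eq:moprlI} for $(\bm{n};\bm{m})$ (whose solution space is one-dimensional by normality), is correct and is all the paper itself writes down.

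Where you go beyond the paper is in identifying the scalar in \eqref{eq:CCI} with $\gamma^{k\ell}_{\bm{n}-\bm{e}_k-\bm{e}_\ell;\bm{m}}$ --- a claim the statement does make but whose verification the paper delegates to ``same arguments'' and to the reference [KVMOPUC]. Your identification via \eqref{eq:SzegoI1} is correct but only determines $c$ when $\alpha_{\bm{n}-\bm{e}_k-\bm{e}_\ell;\bm{m}}\neq 0$, and your fallback via \eqref{eq:biorthogonality 1} only when $\beta_{\bm{n};\bm{m}}\neq 0$; normality of all the indices involved does not prevent both of these coefficients from vanishing simultaneously (they are ratios of determinants of $T$ for the \emph{shifted} functionals $w\bm{L}$ and $w^{-1}\bm{L}$, which are not controlled by the normality hypotheses), so as written there is a residual degenerate case. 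A fix that needs no nondegeneracy: compare the $z^{-n_\ell}$-coefficient of the $\ell$-th component on both sides of \eqref{eq:CCI}. The $\ell$-th component of $\bm\Xi_{\bm{n}-\bm{e}_\ell;\bm{m}}$ lies in $\operatorname{span}\{z^i\}_{i=-n_\ell+1}^{m_\ell-1}$ and contributes nothing, so \eqref{eq:kappa} gives
\begin{equation*}
\frac{1}{L_\ell[\Phi_{\bm{n}-\bm{e}_k-\bm{e}_\ell;\bm{m}}(w)w^{-n_\ell+1}]}=c\cdot\frac{1}{L_\ell[\Phi_{\bm{n}-\bm{e}_\ell;\bm{m}}(w)w^{-n_\ell+1}]},
\end{equation*}
i.e.\ $c=L_\ell[\Phi_{\bm{n}-\bm{e}_\ell;\bm{m}}(w)w^{-n_\ell+1}]/L_\ell[\Phi_{\bm{n}-\bm{e}_k-\bm{e}_\ell;\bm{m}}(w)w^{-n_\ell+1}]$, which is precisely \eqref{eq:gamma} evaluated at the index $(\bm{n}-\bm{e}_k-\bm{e}_\ell;\bm{m})$; both quantities are finite and the identification holds under the stated normality alone. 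With that replacement (and the analogous coefficient comparison, or duality via Proposition~\ref{thm:reversal vs star polynomials}, for \eqref{eq:CCIInew}) your argument is complete.
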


\begin{proof}
    Check orthogonality relations and degree restrictions of $\Phi_{\bm{n}+\bm{e}_k;\bm{m}}-\Phi_{\bm{n}+\bm{e}_\ell;\bm{m}}$, and compare with $\Phi_{\bm{n};\bm{m}}$. The first relation then follows by normality of $(\bm{n};\bm{m})$, and same arguments prove the other relations. 
\end{proof}

\begin{rem}
    Similarly to the previous section, we have, for $k\ne \ell$,
    \begin{align}\label{eq:gamma}
    \gamma^{k\ell}_{\bm{n};\bm{m}} &= \frac{L_\ell[\Phi_{\bm{n}+\bm{e}_k;\bm{m}}(w)w^{-n_\ell}]}{L_\ell[\Phi_{\bm{n};\bm{m}}(w)w^{-n_\ell}]},
    \\\label{eq:eta}
    \eta^{k\ell}_{\bm{n};\bm{m}} &= \frac{L_\ell[\Phi^*_{\bm{n};\bm{m}+\bm{e}_k}(w)w^{m_\ell}]}{L_\ell[\Phi^*_{\bm{n};\bm{m}}(w)w^{m_\ell}]},
    \end{align}
    and  
    \begin{equation}
             \gamma_{\bm{n};\bm{m}}^{k\ell}(\bm{L}^\sharp) = \bar\eta_{\bm{m};\bm{n}}^{k\ell}(\bm{L}), \qquad \eta_{\bm{n};\bm{m}}^{k\ell}(\bm{L}^\sharp) = \bar\gamma_{\bm{m};\bm{n},j}^{k\ell}(\bm{L}).
    \end{equation}
    Furthermore, $\gamma^{k\ell}_{\bm{n};\bm{m}} \neq 0$ if and only if $(\bm{n}+\bm{e}_k+\bm{e}_\ell;\bm{m})$ is normal, and $\eta_{\bm{n};\bm{m}}^{k\ell} \neq 0$ if and only if $(\bm{n};\bm{m}+\bm{e}_k+\bm{e}_\ell)$ is normal. The constants in \eqref{eq:CCI} and~\eqref{eq:CCIInew} exist even if $(\bm{n}-\bm{e}_k-\bm{e}_\ell;\bm{m})$, resp. $(\bm{n};\bm{m}-\bm{e}_k-\bm{e}_\ell)$, is not normal (see \cite{KVMOPUC} for the details).
\end{rem}

\begin{thm}
Assuming normality of all indices required for the recurrence coefficients below to be defined, we have the partial difference equations
     \begin{align} 
    \label{eq:compatibility condition 2}
    & \alpha_{\bm{n};\bm{m}}\beta_{\bm{n};\bm{m}} + \sum_{j = 1}^r \rho_{\bm{n};\bm{m},j} = 1,
    \\
    \label{eq:compatibility condition 1}
    & 
    \vphantom{\sum_{j = 1}^r }
    \alpha_{\bm{n}+\bm{e}_k;\bm{m}} - \alpha_{\bm{n}+\bm{e}_\ell;\bm{m}}
    = \alpha_{\bm{n};\bm{m}} \gamma_{\bm{n};\bm{m}}^{k\ell},
    \\
    \label{eq:another comp}
    \vphantom{\sum_{j = 1}^r }
    &\beta_{\bm{n}+\bm{e}_\ell;\bm{m}} - \beta_{\bm{n}+\bm{e}_k;\bm{m}}
    = \beta_{\bm{n}+\bm{e}_\ell+\bm{e}_k;\bm{m}} \gamma_{\bm{n};\bm{m}}^{k\ell},
    \\
    \vphantom{\sum_{j = 1}^r }
    \label{eq:compatibility condition 3.1}
    & \rho_{\bm{n};\bm{m},k} \gamma_{\bm{n};\bm{m}}^{k\ell}= \rho_{\bm{n}+\bm{e}_\ell;\bm{m},k}, \gamma_{\bm{n}-\bm{e}_k;\bm{m}}^{k\ell} .
    \end{align}

Similarly,
    \begin{align} 
    \label{eq:compatibility condition 2_another}
    & \alpha_{\bm{n};\bm{m}}\beta_{\bm{n};\bm{m}} + \sum_{j = 1}^r \sigma_{\bm{n};\bm{m},j} = 1,
    \\
    \label{eq:compatibility condition 1_another}
    & 
    \vphantom{\sum_{j = 1}^r }
    \alpha_{\bm{n};\bm{m}+\bm{e}_\ell} - \alpha_{\bm{n};\bm{m}+\bm{e}_k}
    = \alpha_{\bm{n};\bm{m}+\bm{e}_\ell+\bm{e}_k} \eta_{\bm{n};\bm{m}}^{k\ell},
    \\
    \vphantom{\sum_{j = 1}^r }
    &\beta_{\bm{n};\bm{m}+\bm{e}_k} - \beta_{\bm{n};\bm{m}+\bm{e}_\ell}
    = \beta_{\bm{n};\bm{m}} \eta_{\bm{n};\bm{m}}^{k\ell},
    \\
    \vphantom{\sum_{j = 1}^r }
    \label{eq:compatibility condition 3.1_another}
    & \sigma_{\bm{n};\bm{m},k} \eta_{\bm{n};\bm{m}}^{k\ell}= \sigma_{\bm{n};\bm{m}+\bm{e}_\ell,k}, \eta_{\bm{n};\bm{m}-\bm{e}_k}^{k\ell} .
    \end{align}
\end{thm}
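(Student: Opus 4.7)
The plan is to derive each of the eight compatibility relations by combining the Szeg\H{o} recurrences of Theorem~\ref{thm:Szego1} with the nearest-neighbour relations of Proposition~\ref{pr:compPoly1}, and then reading off a single coefficient. Throughout I use that $\Phi_{\bm{n};\bm{m}}$ has leading coefficient $1$ at $z^{|\bm{n}|}$ and trailing coefficient $\alpha_{\bm{n};\bm{m}}$ at $z^{-|\bm{m}|}$, while $\Phi^*_{\bm{n};\bm{m}}$ has leading coefficient $\beta_{\bm{n};\bm{m}}$ at $z^{|\bm{n}|}$ and trailing coefficient $1$ at $z^{-|\bm{m}|}$.

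Identity~\eqref{eq:compatibility condition 2} falls out of~\eqref{eq:SzegoII2} by matching the $z^{|\bm{n}|}$ coefficient: the left-hand side contributes $1$, the term $\alpha_{\bm{n};\bm{m}}\Phi^*_{\bm{n};\bm{m}}$ contributes $\alpha_{\bm{n};\bm{m}}\beta_{\bm{n};\bm{m}}$, and each $z\Phi_{\bm{n}-\bm{e}_j;\bm{m}}$ contributes $\rho_{\bm{n};\bm{m},j}$. Identity~\eqref{eq:compatibility condition 1} is obtained by reading the $z^{-|\bm{m}|}$ coefficient on both sides of~\eqref{eq:CCII}. For~\eqref{eq:another comp}, I apply~\eqref{eq:SzegoII1} in two different directions (index $k$ and index $\ell$) to the same $\Phi^*_{\bm{n}+\bm{e}_k+\bm{e}_\ell;\bm{m}}$; equating the two expressions and substituting~\eqref{eq:CCII} produces the polynomial identity
\[
\Phi^*_{\bm{n}+\bm{e}_\ell;\bm{m}} - \Phi^*_{\bm{n}+\bm{e}_k;\bm{m}} = \beta_{\bm{n}+\bm{e}_k+\bm{e}_\ell;\bm{m}}\, \gamma^{k\ell}_{\bm{n};\bm{m}}\, z\,\Phi_{\bm{n};\bm{m}},
\]
whose $z^{|\bm{n}|+1}$ coefficient gives~\eqref{eq:another comp}.

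The main obstacle is~\eqref{eq:compatibility condition 3.1}. I expand both $\Phi_{\bm{n}+\bm{e}_k;\bm{m}}$ and $\Phi_{\bm{n}+\bm{e}_\ell;\bm{m}}$ via~\eqref{eq:SzegoII2}; within each sum the ``cross'' terms $\Phi_{\bm{n}+\bm{e}_k-\bm{e}_\ell;\bm{m}}$ and $\Phi_{\bm{n}+\bm{e}_\ell-\bm{e}_k;\bm{m}}$ are rewritten via~\eqref{eq:CCII} shifted by $-\bm{e}_\ell$ and $-\bm{e}_k$, producing combinations of $\Phi_{\bm{n};\bm{m}}$ with $\Phi_{\bm{n}-\bm{e}_\ell;\bm{m}}$ or $\Phi_{\bm{n}-\bm{e}_k;\bm{m}}$, respectively. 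Subtracting, the left side equals $\gamma^{k\ell}_{\bm{n};\bm{m}}\Phi_{\bm{n};\bm{m}}$ by~\eqref{eq:CCII}, which I also expand via~\eqref{eq:SzegoII2}. The coefficient of $z\Phi_{\bm{n}-\bm{e}_k;\bm{m}}$ then isolates~\eqref{eq:compatibility condition 3.1}, with the symmetric match at $z\Phi_{\bm{n}-\bm{e}_\ell;\bm{m}}$ giving the $k\leftrightarrow\ell$ swap. The technical difficulty is the bookkeeping of many cancellations and justifying uniqueness of the decomposition, which follows the same scheme as in~\cite[Section~5]{KVMOPUC} and is ensured here by the normality assumptions.

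The second block~\eqref{eq:compatibility condition 2_another}--\eqref{eq:compatibility condition 3.1_another} is obtained by rerunning the identical scheme on the $\bm{m}$-shift recurrences of Theorem~\ref{thm:Szego2} and the compatibility relations~\eqref{eq:CCII2}--\eqref{eq:CCIInew}. Alternatively, one invokes the duality of Proposition~\ref{thm:reversal vs star polynomials} together with Remark~\ref{rem:reversal}: applying each identity of the first block to the dual system $\bm{L}^\sharp$ and using the involutions $(\bm{n};\bm{m})\leftrightarrow(\bm{m};\bm{n})$, $\alpha\leftrightarrow\bar\beta$, $\rho\leftrightarrow\bar\sigma$, $\gamma\leftrightarrow\bar\eta$ delivers the conjugate of the corresponding identity in the second block.
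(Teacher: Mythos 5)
Your proposal is correct, and for \eqref{eq:compatibility condition 2} and \eqref{eq:compatibility condition 1} it coincides with the paper's proof (leading coefficient of \eqref{eq:SzegoII2}, lowest coefficient of \eqref{eq:CCII}). For the remaining two relations you take genuinely different routes. For \eqref{eq:another comp} the paper extracts the identity from the type I compatibility relation \eqref{eq:CCI} combined with the biorthogonality formula \eqref{eq:biorthogonality 1}, whereas you stay entirely within the type II family: applying \eqref{eq:SzegoII1} to $\Phi^*_{\bm{n}+\bm{e}_k+\bm{e}_\ell;\bm{m}}$ along the two directions and subtracting, then invoking \eqref{eq:CCII}, does yield
$\Phi^*_{\bm{n}+\bm{e}_\ell;\bm{m}}-\Phi^*_{\bm{n}+\bm{e}_k;\bm{m}}=\beta_{\bm{n}+\bm{e}_k+\bm{e}_\ell;\bm{m}}\gamma^{k\ell}_{\bm{n};\bm{m}}\,z\Phi_{\bm{n};\bm{m}}$, and the $z^{|\bm{n}|+1}$ coefficient gives the claim; this is valid and avoids the type I objects altogether, at the (harmless) cost of using normality of $(\bm{n}+\bm{e}_k+\bm{e}_\ell;\bm{m})$, which is anyway required for $\beta_{\bm{n}+\bm{e}_k+\bm{e}_\ell;\bm{m}}$ to be defined. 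For \eqref{eq:compatibility condition 3.1} the paper simply divides the explicit moment-quotient formulas \eqref{eq:rho} and \eqref{eq:gamma} (via the normalization ratios \eqref{eq:norm1}), which is a two-line algebraic identity; your expansion of $\Phi_{\bm{n}+\bm{e}_k;\bm{m}}-\Phi_{\bm{n}+\bm{e}_\ell;\bm{m}}=\gamma^{k\ell}_{\bm{n};\bm{m}}\Phi_{\bm{n};\bm{m}}$ through \eqref{eq:SzegoII2} and shifted versions of \eqref{eq:CCII} is essentially the computation the paper later performs to prove \eqref{eq:SzegoII9}, run in reverse, and it does isolate the desired relation. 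The one point you should make explicit rather than defer to ``uniqueness of the decomposition'' is how the coefficient of $z\Phi_{\bm{n}-\bm{e}_k;\bm{m}}$ is actually extracted: apply $L_k[\,\cdot\,w^{-n_k}]$ to both sides; every term of the form $w\Phi_{\bm{n}-\bm{e}_j;\bm{m}}(w)$ with $j\neq k$, as well as $w\Phi_{\bm{n};\bm{m}}(w)$ and all the $\Phi^*$ terms, is annihilated by the orthogonality relations \eqref{eq:moprlII} and \eqref{eq:moprlII*}, and the surviving factor $L_k[\Phi_{\bm{n}-\bm{e}_k;\bm{m}}(w)w^{-n_k+1}]$ is nonzero by normality of $(\bm{n};\bm{m})$. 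With that step spelled out your argument is complete; both approaches then dispose of the second block identically, by duality or by symmetric arguments.
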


\begin{proof}
    \eqref{eq:compatibility condition 2} follows by comparing leading coefficients in \eqref{eq:SzegoII2}, and \eqref{eq:compatibility condition 1} follows by comparing the lowest coefficient in \eqref{eq:CCII}. \eqref{eq:another comp} can be obtained from \eqref{eq:CCI} and \eqref{eq:biorthogonality 1}. \eqref{eq:compatibility condition 3.1} is immediate from \eqref{eq:rho} and \eqref{eq:gamma}. Similar arguments then imply the dual relations \eqref{eq:compatibility condition 2_another}-\eqref{eq:compatibility condition 3.1_another}.
\end{proof}


\section{Consequences of the Szeg\H{o} Relations}\label{ss:extra}

Aside from the main recurrence relations of the previous sections, we want to mention some other recurrence relation that may be useful. We use two of them in the proof of the Geronimus relations for the Szeg\H{o} mapping in Section \ref{ss:SzegoMap}.

\begin{prop}
Assuming all the indices appearing in the equations below are normal, we have recurrence relations
    \begin{align}\label{eq:SzegoII7}
        & \Phi_{\bm{n}+\bm{e}_k;\bm{m}}^*(z) = (1 - \alpha_{\bm{n};\bm{m}+\bm{e}_k}\beta_{\bm{n}+\bm{e}_k;\bm{m}})\Phi_{\bm{n};\bm{m}}^*(z) + \beta_{\bm{n}+\bm{e}_k;\bm{m}}z\Phi_{\bm{n};\bm{m}+\bm{e}_k}(z), \\
        & \label{eq:SzegoII8}\Phi_{\bm{n};\bm{m}+\bm{e}_k}(z) = (1 - \alpha_{\bm{n};\bm{m}+\bm{e}_k}\beta_{\bm{n}+\bm{e}_k;\bm{m}})\Phi_{\bm{n};\bm{m}}(z) + \alpha_{\bm{n};\bm{m}+\bm{e}_k}z^{-1}\Phi_{\bm{n}+\bm{e}_k;\bm{m}}^*(z), 
        \end{align}
        and 
        \begin{align}& \bm{\Xi}_{\bm{n}-\bm{e}_k;\bm{m}}^*(z) = (1-\alpha_{\bm{n}-\bm{e}_k;\bm{m}}\beta_{\bm{n};\bm{m}-\bm{e}_k})\bm{\Xi}_{\bm{n};\bm{m}}^*(z) - \alpha_{\bm{n}-\bm{e}_k;\bm{m}}z\bm{\Xi}_{\bm{n};\bm{m}-\bm{e}_k}(z), \\
        & \bm{\Xi}_{\bm{n};\bm{m}-\bm{e}_k}(z) = (1-\alpha_{\bm{n}-\bm{e}_k;\bm{m}}\beta_{\bm{n};\bm{m}-\bm{e}_k})\bm{\Xi}_{\bm{n};\bm{m}}(z) - \beta_{\bm{n};\bm{m}-\bm{e}_k}z^{-1}\bm{\Xi}_{\bm{n}-\bm{e}_k;\bm{m}}^*(z).
    \end{align}
\end{prop}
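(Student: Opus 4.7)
The approach is routine linear elimination using the two families of Szeg\H{o} recurrences from Theorems~\ref{thm:Szego1} and~\ref{thm:Szego2}. The four identities stated here are ``mixed'' recurrences that simultaneously shift both multi-indices ($\bm{n}\to\bm{n}\pm\bm{e}_k$ and $\bm{m}\to\bm{m}\pm\bm{e}_k$), whereas each previously established Szeg\H{o} relation shifts only one of the two multi-indices at a time. My plan is simply to combine one $\bm{n}$-shift relation with one $\bm{m}$-shift relation and eliminate the intermediate polynomial by algebraic substitution. No real obstacle arises; the normality hypotheses are used only to guarantee that every polynomial appearing in the intermediate steps is well defined.

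Concretely, for~\eqref{eq:SzegoII7} I would apply~\eqref{eq:SzegoII1} with the shift $\bm{n}\to\bm{n}+\bm{e}_k$ to write
\[
\Phi_{\bm{n}+\bm{e}_k;\bm{m}}^*(z) = \Phi_{\bm{n};\bm{m}}^*(z) + \beta_{\bm{n}+\bm{e}_k;\bm{m}}\,z\,\Phi_{\bm{n};\bm{m}}(z),
\]
and then eliminate $\Phi_{\bm{n};\bm{m}}(z)$ using the $\bm{m}\to\bm{m}+\bm{e}_k$ shift of~\eqref{eq:SzegoII4}, namely
\[
\Phi_{\bm{n};\bm{m}}(z) = \Phi_{\bm{n};\bm{m}+\bm{e}_k}(z) - \alpha_{\bm{n};\bm{m}+\bm{e}_k}\,z^{-1}\,\Phi_{\bm{n};\bm{m}}^*(z).
\]
Substituting and collecting the coefficient of $\Phi_{\bm{n};\bm{m}}^*(z)$ produces precisely $1-\alpha_{\bm{n};\bm{m}+\bm{e}_k}\beta_{\bm{n}+\bm{e}_k;\bm{m}}$, which gives~\eqref{eq:SzegoII7}. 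Identity~\eqref{eq:SzegoII8} follows from the dual substitution: solve the same shifted~\eqref{eq:SzegoII1} for $\Phi_{\bm{n};\bm{m}}^*(z)$ instead, and insert it into the shifted~\eqref{eq:SzegoII4}.

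For the two $\bm\Xi$-identities the same strategy applies directly using the type I Szeg\H{o} relations: combine~\eqref{eq:SzegoI1} (shifted by $\bm{n}\to\bm{n}-\bm{e}_k$) with~\eqref{eq:SzegoI4} (shifted by $\bm{m}\to\bm{m}-\bm{e}_k$) and eliminate either $\bm\Xi_{\bm{n};\bm{m}}(z)$ or $\bm\Xi^*_{\bm{n};\bm{m}}(z)$ according to the identity being targeted. As an alternative shortcut, one may invoke Theorem~\ref{thm:reversal vs star polynomials}: the $\sharp$-duality swaps the roles of $\bm{n}\leftrightarrow\bm{m}$, $\alpha\leftrightarrow\bar\beta$, and $\Phi,\Phi^*\leftrightarrow\bm\Xi^*,\bm\Xi$, and thereby transports~\eqref{eq:SzegoII7}--\eqref{eq:SzegoII8} into the remaining two relations without additional calculation.
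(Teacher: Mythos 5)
Your primary argument is correct and is essentially the paper's own proof: the paper likewise obtains \eqref{eq:SzegoII7} by combining the $(\bm{n}+\bm{e}_k;\bm{m})$-instance of \eqref{eq:SzegoII1} with the $(\bm{n};\bm{m}+\bm{e}_k)$-instance of \eqref{eq:SzegoII4} and eliminating the common term, with the remaining three identities following by the same elimination applied to the appropriate pairs of Szeg\H{o} relations. One caveat: your proposed ``shortcut'' via Theorem~\ref{thm:reversal vs star polynomials} does not work as stated, because the $\sharp$-duality sends $\Phi_{\bm{n};\bm{m}}\mapsto\Phi^*_{\bm{m};\bm{n}}$ and $\bm\Xi_{\bm{n};\bm{m}}\mapsto\bm\Xi^*_{\bm{m};\bm{n}}$ --- it interchanges each family with its starred counterpart (and swaps $\bm{n}\leftrightarrow\bm{m}$) but never converts type II relations into type I relations, so the two $\bm\Xi$-identities must be derived from \eqref{eq:SzegoI1} and \eqref{eq:SzegoI4} directly, exactly as in your primary route.
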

\begin{proof}
    Multiply \eqref{eq:SzegoII4} for the index $(\bm{n};\bm{m}+\bm{e}_k)$ by $\beta_{\bm{n}+\bm{e}_k;\bm{m}}z$ and substitute the term $\beta_{\bm{n}+\bm{e}_k;\bm{m}}z\Phi_{\bm{n};\bm{m}}(z)$ using \eqref{eq:SzegoII1} to get \eqref{eq:SzegoII7}. The other equations follow similarly. 
\end{proof}

\begin{cor}Assuming normality of the indices that appear below, we have 
\begin{equation}
    \label{eq:one minus alphabeta}
      \frac{L_k[\Phi_{\bm{n},\bm{m}+\bm{e}_k}(w)w^{-n_k}]}{L_k[\Phi_{\bm{n},\bm{m}}(w)w^{-n_k}]}
      =
        \frac{L_k[\Phi_{\bm{n}+\bm{e}_k,\bm{m}}^*(w)w^{m_k}]}{L_k[\Phi_{\bm{n},\bm{m}}^*(w)w^{m_k}]} = 1-\alpha_{\bm{n};\bm{m}+\bm{e}_k}\beta_{\bm{n}+\bm{e}_k;\bm{m}}.
    \end{equation}
    Furthermore, 
    $\alpha_{\bm{n};\bm{m}+\bm{e}_k}\beta_{\bm{n}+\bm{e}_k;\bm{m}} \neq 1$ if and only if $(\bm{n}+\bm{e}_k;\bm{m}+\bm{e}_k)$ is normal.
\end{cor}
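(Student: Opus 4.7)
The plan is to apply the linear functional $L_k$, evaluated against appropriate monomial weights, to the two recurrences~\eqref{eq:SzegoII7} and~\eqref{eq:SzegoII8}, exploiting orthogonality to eliminate one term from each identity.

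For the first ratio, I would apply $L_k[\,\cdot\,w^{-n_k}]$ to~\eqref{eq:SzegoII8}. This produces, on the right-hand side, a cross term of the form $\alpha_{\bm{n};\bm{m}+\bm{e}_k}\,L_k[\Phi^*_{\bm{n}+\bm{e}_k;\bm{m}}(w)w^{-n_k-1}]$. This term vanishes because the orthogonality range~\eqref{eq:moprlII*} for the index $(\bm{n}+\bm{e}_k;\bm{m})$ at $j=k$ is $-m_k+1\le k'\le n_k+1$, which now includes the endpoint $k'=n_k+1$. Dividing by $L_k[\Phi_{\bm{n};\bm{m}}(w)w^{-n_k}]$, which is nonzero by~\eqref{eq:kappa} applied to the normal index $(\bm{n}+\bm{e}_k;\bm{m})$, delivers the first equality.

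By a symmetric argument I would apply $L_k[\,\cdot\,w^{m_k}]$ to~\eqref{eq:SzegoII7}. Here the cross term $\beta_{\bm{n}+\bm{e}_k;\bm{m}}\,L_k[\Phi_{\bm{n};\bm{m}+\bm{e}_k}(w)w^{m_k+1}]$ vanishes because the orthogonality range~\eqref{eq:moprlII} for the index $(\bm{n};\bm{m}+\bm{e}_k)$ at $j=k$ now reaches down to $k'=-m_k-1$. Dividing by $L_k[\Phi^*_{\bm{n};\bm{m}}(w)w^{m_k}]$, which is nonzero by~\eqref{eq:l} applied to the normal index $(\bm{n};\bm{m}+\bm{e}_k)$, gives the second equality.

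For the ``furthermore'' claim, I would use the first equality just established to rewrite
\begin{equation*}
1-\alpha_{\bm{n};\bm{m}+\bm{e}_k}\beta_{\bm{n}+\bm{e}_k;\bm{m}} = \frac{L_k[\Phi_{\bm{n};\bm{m}+\bm{e}_k}(w)w^{-n_k}]}{L_k[\Phi_{\bm{n};\bm{m}}(w)w^{-n_k}]}.
\end{equation*}
The denominator is already known to be nonzero, so this quantity vanishes if and only if $L_k[\Phi_{\bm{n};\bm{m}+\bm{e}_k}(w)w^{-n_k}]=0$. Invoking Proposition~\ref{prop:general hermite pade measures type I}(ii) at the index $(\bm{n};\bm{m}+\bm{e}_k)$ with $j=k$ (which characterizes when the $\rho$-coefficient, whose numerator is precisely this quantity, vanishes), the latter is equivalent to non-normality of $(\bm{n}+\bm{e}_k;\bm{m}+\bm{e}_k)$, completing the proof.

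The only real subtlety is careful bookkeeping of the orthogonality ranges after the shift by $\bm{e}_k$: one must verify that the extra boundary index in~\eqref{eq:moprlII} and~\eqref{eq:moprlII*} matches precisely the monomial weight needed to annihilate the cross term in each recurrence. Once that alignment is confirmed, the argument reduces to elementary linear algebra.
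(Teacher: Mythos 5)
Your proposal is correct and follows essentially the same route as the paper: the paper's proof likewise multiplies \eqref{eq:SzegoII7} by $z^{m_k}$ and \eqref{eq:SzegoII8} by $z^{-n_k}$, applies $L_k$ so that the cross terms die by the orthogonality relations \eqref{eq:moprlII} and \eqref{eq:moprlII*} at the shifted indices, and derives the normality equivalence from the observation that vanishing of $L_k[\Phi_{\bm{n};\bm{m}+\bm{e}_k}(w)w^{-n_k}]$ means $\Phi_{\bm{n};\bm{m}+\bm{e}_k}$ satisfies all orthogonality conditions of $(\bm{n}+\bm{e}_k;\bm{m}+\bm{e}_k)$. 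Your bookkeeping of the shifted orthogonality ranges is exactly right, and your appeal to Proposition~\ref{prop:general hermite pade measures type I}(ii) for the ``furthermore'' part is just a repackaging of the same argument.
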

\begin{rem}
    If each functional is given by integration with respect to a probability measure on $\T$, then this together with Proposition~\ref{thm:reversal vs star polynomials} shows that $\abs{\alpha_{\bm{n};\bm{n}+\bm{e}_k}} \neq 1$ and $\abs{\beta_{\bm{n}+\bm{e}_k;\bm{n}}} \neq 1$.
\end{rem}

\begin{proof}
    Multiply both sides of \eqref{eq:SzegoII7} by $z^{m_k}$, and multiply \eqref{eq:SzegoII8} by $z^{-n_k}$, then apply $L_k$ to get \eqref{eq:one minus alphabeta}. If these expressions vanish then $\Phi_{\bm{n},\bm{m}+\bm{e}_k}$ satisfies all the orthogonality relations of $(\bm{n}+\bm{e}_k;\bm{m}+\bm{e}_k)$, which contradicts normality. 
\end{proof}

%
%

\begin{prop} Assuming normality of all indices that appear in the respective equations below, 
we have the recurrence relations
    \begin{align}
        \label{eq:SzegoII9}
         z&\Phi_{\bm{n};\bm{m}}(z) = \Phi_{\bm{n}+\bm{e}_k;\bm{m}}(z) - \alpha_{\bm{n}+\bm{e}_k;\bm{m}}\Phi_{\bm{n};\bm{m}}^*(z) + \sum_{j = 1}^r \rho_{\bm{n};\bm{m},j}\gamma_{\bm{n};\bm{m}}^{jk}z\Phi_{\bm{n}-\bm{e}_j;\bm{m}}(z), \\
        \label{eq:SzegoII10}
         z^{-1}&\Phi_{\bm{n};\bm{m}}^*(z) = \Phi_{\bm{n};\bm{m}+\bm{e}_k}^*(z) - \beta_{\bm{n};\bm{m}+\bm{e}_k}\Phi_{\bm{n};\bm{m}}(z) + \sum_{j = 1}^r \sigma_{\bm{n};\bm{m},j}\eta_{\bm{n};\bm{m}}^{jk}z^{-1}\Phi_{\bm{n};\bm{m}-\bm{e}_j}(z),
    \end{align}
    and 
    \begin{align*}
        z &\bm{\Xi}_{\bm{n};\bm{m}}(z) = \bm{\Xi}_{\bm{n}-\bm{e}_k;\bm{m}}(z) + \beta_{\bm{n}-\bm{e}_k;\bm{m}}\bm{\Xi}_{\bm{n};\bm{m}}^*(z) + \sum_{j=1}^r \rho_{\bm{n};\bm{m},j}\gamma_{\bm{n}-\bm{e}_k+\bm{e}_j;\bm{m}}^{jk}z\bm{\Xi}_{\bm{n}+\bm{e}_j;\bm{m}}(z),\\
        z^{-1}&\bm{\Xi}_{\bm{n};\bm{m}}^*(z) = \bm{\Xi}_{\bm{n};\bm{m}-\bm{e}_k}^*(z) + \alpha_{\bm{n};\bm{m}-\bm{e}_k}\bm{\Xi}_{\bm{n};\bm{m}}(z)+\sum_{j=1}^r \sigma_{\bm{n};\bm{m},j}\eta_{\bm{n};\bm{m}-\bm{e}_k+\bm{e}_j}^{jk}z^{-1}\bm{\Xi}_{\bm{n};\bm{m}+\bm{e}_j}(z)
    \end{align*}
\end{prop}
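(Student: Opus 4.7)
The plan is to prove the first identity directly and then obtain the other three by symmetric arguments. I focus on
\begin{align*}
z\Phi_{\bm{n};\bm{m}}(z) = \Phi_{\bm{n}+\bm{e}_k;\bm{m}}(z) - \alpha_{\bm{n}+\bm{e}_k;\bm{m}}\Phi_{\bm{n};\bm{m}}^*(z) + \sum_{j = 1}^r \rho_{\bm{n};\bm{m},j}\gamma_{\bm{n};\bm{m}}^{jk}z\Phi_{\bm{n}-\bm{e}_j;\bm{m}}(z).
\end{align*}
The idea is to start from the right-hand side and use the two Szeg\H{o} recurrences at the shifted index $\bm{n}+\bm{e}_k$ to reduce everything to polynomials at indices $\bm{n}$ and $\bm{n}-\bm{e}_j$. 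First, I would apply \eqref{eq:SzegoII2} at $\bm{n}+\bm{e}_k$ to write $\Phi_{\bm{n}+\bm{e}_k;\bm{m}}(z)$ as $\alpha_{\bm{n}+\bm{e}_k;\bm{m}}\Phi_{\bm{n}+\bm{e}_k;\bm{m}}^*(z) + \sum_j \rho_{\bm{n}+\bm{e}_k;\bm{m},j}z\Phi_{\bm{n}+\bm{e}_k-\bm{e}_j;\bm{m}}(z)$. Second, \eqref{eq:SzegoII1} gives $\Phi_{\bm{n}+\bm{e}_k;\bm{m}}^*(z) - \Phi_{\bm{n};\bm{m}}^*(z) = \beta_{\bm{n}+\bm{e}_k;\bm{m}}z\Phi_{\bm{n};\bm{m}}(z)$, which eliminates the starred term on the right-hand side.

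At this stage the expression contains $\alpha_{\bm{n}+\bm{e}_k;\bm{m}}\beta_{\bm{n}+\bm{e}_k;\bm{m}}z\Phi_{\bm{n};\bm{m}}(z)$, the $j=k$ contribution $\rho_{\bm{n}+\bm{e}_k;\bm{m},k}z\Phi_{\bm{n};\bm{m}}(z)$, and, for $j\ne k$, terms $\rho_{\bm{n}+\bm{e}_k;\bm{m},j}z\Phi_{\bm{n}+\bm{e}_k-\bm{e}_j;\bm{m}}(z)$. The compatibility relation \eqref{eq:CCII} applied at the index $\bm{n}-\bm{e}_j$ with pair $(k,j)$ yields $\Phi_{\bm{n}+\bm{e}_k-\bm{e}_j;\bm{m}}(z) = \Phi_{\bm{n};\bm{m}}(z) + \gamma_{\bm{n}-\bm{e}_j;\bm{m}}^{kj}\Phi_{\bm{n}-\bm{e}_j;\bm{m}}(z)$, so each such term splits into a piece contributing to $z\Phi_{\bm{n};\bm{m}}(z)$ and a piece contributing to $z\Phi_{\bm{n}-\bm{e}_j;\bm{m}}(z)$. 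Collecting all coefficients of $z\Phi_{\bm{n};\bm{m}}(z)$ gives $\alpha_{\bm{n}+\bm{e}_k;\bm{m}}\beta_{\bm{n}+\bm{e}_k;\bm{m}} + \sum_{j=1}^r \rho_{\bm{n}+\bm{e}_k;\bm{m},j}$, which equals $1$ by \eqref{eq:compatibility condition 2} applied at $\bm{n}+\bm{e}_k$.

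It remains to match the $z\Phi_{\bm{n}-\bm{e}_j;\bm{m}}(z)$ coefficients. One needs $\rho_{\bm{n}+\bm{e}_k;\bm{m},j}\gamma_{\bm{n}-\bm{e}_j;\bm{m}}^{kj} = -\rho_{\bm{n};\bm{m},j}\gamma_{\bm{n};\bm{m}}^{jk}$ for each $j\ne k$, together with the convention $\gamma_{\bm{n};\bm{m}}^{kk}=0$. Using the antisymmetry $\gamma^{jk} = -\gamma^{kj}$, this becomes $\rho_{\bm{n}+\bm{e}_k;\bm{m},j}\gamma_{\bm{n}-\bm{e}_j;\bm{m}}^{kj} = \rho_{\bm{n};\bm{m},j}\gamma_{\bm{n};\bm{m}}^{kj}$, which is precisely \eqref{eq:compatibility condition 3.1} with roles of the two indices relabelled. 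This closes the argument for the first identity.

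The second identity follows by the same strategy but using Theorem~\ref{thm:Szego2} instead of Theorem~\ref{thm:Szego1}, the compatibility \eqref{eq:CCII2} in place of \eqref{eq:CCII}, and \eqref{eq:compatibility condition 2_another}, \eqref{eq:compatibility condition 3.1_another} in place of their unstarred counterparts; alternatively one can invoke Proposition~\ref{thm:reversal vs star polynomials} to transfer the first identity to the second after the index swap $(\bm{n};\bm{m})\leftrightarrow(\bm{m};\bm{n})$. The two $\bm\Xi$ identities admit an analogous derivation, starting from the type~I Szeg\H{o} recurrences \eqref{eq:SzegoI1}--\eqref{eq:SzegoI2} (or \eqref{eq:SzegoI4}--\eqref{eq:SzegoI5}) and the type~I compatibility \eqref{eq:CCI} (or \eqref{eq:CCIInew}), with the direction of the index shift reversed ($\bm{n}-\bm{e}_k$ replacing $\bm{n}+\bm{e}_k$, etc.). The main obstacle is purely bookkeeping: keeping track of which index is being perturbed at each step, isolating the $j=k$ term correctly, and invoking the right instance of \eqref{eq:compatibility condition 3.1} (with the appropriate relabelling of its two indices) to close the identity; the antisymmetry of $\gamma^{k\ell}$ and the convention $\gamma^{kk}=0$ are essential at the very last step.
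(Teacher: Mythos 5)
Your argument is correct and follows essentially the same route as the paper's proof: expand $\Phi_{\bm{n}+\bm{e}_k;\bm{m}}$ via \eqref{eq:SzegoII2}, split the nearest-neighbour terms with \eqref{eq:CCII}, collect the $z\Phi_{\bm{n};\bm{m}}$ coefficient using \eqref{eq:compatibility condition 2}, match the remaining terms via \eqref{eq:compatibility condition 3.1} together with the antisymmetry of $\gamma^{k\ell}$, and finish with \eqref{eq:SzegoII1}; the remaining identities are obtained exactly as in the paper (by Proposition~\ref{thm:reversal vs star polynomials} or the analogous type~I computation). Your explicit remarks about the $j=k$ term and the convention $\gamma^{kk}=0$ only make the bookkeeping clearer.
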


\begin{proof}
    By \eqref{eq:SzegoII2} and \eqref{eq:CCII} we have
    \begin{align*}
        & \Phi_{\bm{n}+\bm{e}_k;\bm{m}}(z) = \alpha_{\bm{n}+\bm{e}_k;\bm{m}}\Phi_{\bm{n}+\bm{e}_k;\bm{m}}^*(z) + \sum_{j = 1}^r \rho_{\bm{n}+\bm{e}_k;\bm{m},j}z\Phi_{\bm{n}+\bm{e}_k-\bm{e}_j;\bm{m}}(z) \\ & 
        = \alpha_{\bm{n}+\bm{e}_k;\bm{m}}\Phi_{\bm{n}+\bm{e}_k;\bm{m}}^*(z) + \sum_{j \neq k}\rho_{\bm{n}+\bm{e}_k;\bm{m},j}z(\Phi_{\bm{n};\bm{m}}(z) + \gamma_{\bm{n}-\bm{e}_j;\bm{m}}^{kj}\Phi_{\bm{n}-\bm{e}_j;\bm{m}}(z)) \\ & \hspace{5cm}+ \rho_{\bm{n}+\bm{e}_k;\bm{m},k}z\Phi_{\bm{n};\bm{m}}(z).
    \end{align*}
    By separating the two terms in the sum and then using \eqref{eq:compatibility condition 2} and \eqref{eq:compatibility condition 3.1} we end up with
    \begin{multline*}
        \Phi_{\bm{n}+\bm{e}_k;\bm{m}}(z) = \alpha_{\bm{n}+\bm{e}_k;\bm{m}}\Phi_{\bm{n}+\bm{e}_k;\bm{m}}^*(z) + (1 - \alpha_{\bm{n}+\bm{e}_k;\bm{m}}\beta_{\bm{n}+\bm{e}_k;\bm{m}})z\Phi_{\bm{n};\bm{m}}(z) \\ - \sum_{j = 1}^r\rho_{\bm{n};\bm{m},j}\gamma_{\bm{n};\bm{m}}^{jk}z\Phi_{\bm{n}-\bm{e}_j;\bm{m}}(z).
    \end{multline*}
    This together with \eqref{eq:SzegoII1} implies the first equality. 
    The second equality then follows by Proposition \ref{thm:reversal vs star polynomials}. The type I formulas can be derived analogously. 
\end{proof}

\begin{rem}
    The recurrence relations can alternatively be derived directly from orthogonality relations, e.g., the first relation follows by checking orthogonality relations and degree restrictions of $\Phi_{\bm{n};\bm{m}}(z) - z^{-1}\Phi_{\bm{n}+\bm{e}_k;\bm{m}}+\alpha_{\bm{n}+\bm{e}_k;\bm{m}}z^{-1}\Phi_{\bm{n};\bm{m}}^*(z)$ (using similar arguments as in the proofs of the Szeg\H{o} recurrence relations of \cite{KVMOPUC}), and then applying \eqref{eq:rho} and \eqref{eq:gamma}. This approach does not need normality of indices of the form $(\bm{n}+\bm{e}_k-\bm{e}_j;\bm{m})$, $j \neq k$. 
\end{rem}

We can eliminate the coefficients $\gamma_{\bm{n};\bm{m}}^{jk}$ in \eqref{eq:SzegoII9} if we multiply both sides by $\alpha_{\bm{n};\bm{m}}$. By applying \eqref{eq:compatibility condition 1} and then using \eqref{eq:SzegoII2} we get
\begin{multline}\label{eq:generalized three term}
    \alpha_{\bm{n};\bm{m}}z\Phi_{\bm{n};\bm{m}}(z) = \alpha_{\bm{n};\bm{m}}\Phi_{\bm{n}+\bm{e}_k;\bm{m}}(z) - \alpha_{\bm{n}+\bm{e}_k;\bm{m}}\Phi_{\bm{n};\bm{m}}(z) \\ + \sum_{j = 1}^r \alpha_{\bm{n}+\bm{e}_j;\bm{m}}\rho_{\bm{n};\bm{m},j}z\Phi_{\bm{n}-\bm{e}_j;\bm{m}}(z).
    \end{multline}
Hence the end result expresses $z\Phi_{\bm{n};\bm{m}}(z)$ in terms of the nearest neighbours, without appealing to any $\Phi^*$-polynomials or $\beta$-coefficients. In the case $\bm{m} = \bm{0}$ this is the main result of \cite{MOPUC2}. In terms of our recurrence coefficients, in the case $r=1$ this turns into exactly the three-term recurrence relation of OPUC \cite[Sect. 1.5]{OPUC1}. In the degenerate case $\alpha_{\bm{n};\bm{m}} = 0$ we get essentially no information from this relation. 

Of course, an analogous formula can be derived for the type I polynomials, as well as reversed versions through Proposition \ref{thm:reversal vs star polynomials}.
    



\section{Heine formulas}\label{ss:Heine}

Assuming $(\bm{n};\bm{m})$ is normal, it is easy to verify the determinantal formula
    \begin{equation}\label{eq:HeineII}
        \Phi_{\bm{n};\bm{m}}(z) = \frac{1}{\det T_{\bm{n};\bm{m}}}
        \det
          \left(\begin{array}{c c c}
        c_{\abs{\bm{m}}-m_1,1} & \cdots & c_{-\abs{\bm{n}}-m_1,1} \\
        c_{\abs{\bm{m}}-m_1+1,1} & \cdots & c_{-\abs{\bm{n}}-m_1+1,1} \\
        \vdots & \ddots & \vdots \\
        c_{\abs{\bm{m}}+n_1-1,1} & \cdots & c_{-\abs{\bm{n}}+n_1-1,1} 
        \\[0.2em]
        \hdashline[0.5pt/2pt]
        \\[-1.2em]
        & \vdots \\[0.2em]
        \hdashline[0.5pt/2pt]
        \\[-1.0em]
        c_{\abs{\bm{m}}-m_r,r} & \cdots & c_{-\abs{\bm{n}}-m_r,r} \\
        c_{\abs{\bm{m}}-m_r+1,r} & \cdots & c_{-\abs{\bm{n}}-m_r+1,r} \\
        \vdots & \ddots & \vdots \\
        c_{\abs{\bm{m}}+n_r-1,r} & \cdots & c_{-\abs{\bm{n}}+n_r-1,r}
        \\[0.2em]
        \hdashline[0.5pt/2pt]
        \\[-1.0em]
        z^{-|\bm{m}|} & \cdots & z^{|\bm{n}|}
    \end{array}\right),
    \end{equation}
    where $T_{\bm{n};\bm{m}}$ is given in~\eqref{eq:T}. Indeed, it is easy to see that the right-hand side of~\eqref{eq:HeineII} is 
    of the form~\eqref{eq:moprlIIspan}. Then multiplying it with $z^{-k}$ for a given $k=-m_j,\ldots,n_j-1$ and applying $L_j$, one obtains a determinant with two identical rows, implying~\eqref{eq:moprlII}.

    Similarly,
    \begin{equation}\label{eq:HeineII*}
        \Phi^*_{\bm{n};\bm{m}}(z) = \frac{(-1)^{|\bm{n}|+|\bm{m}|}}{\det T_{\bm{n};\bm{m}}}
        \det
          \left(\begin{array}{c c c}
        c_{\abs{\bm{m}}-m_1+1,1} & \cdots & c_{-\abs{\bm{n}}-m_1+1,1} \\
        c_{\abs{\bm{m}}-m_1+2,1} & \cdots & c_{-\abs{\bm{n}}-m_1+2,1} \\
        \vdots & \ddots & \vdots \\
        c_{\abs{\bm{m}}+n_1,1} & \cdots & c_{-\abs{\bm{n}}+n_1,1} 
        \\[0.2em]
        \hdashline[0.5pt/2pt]
        \\[-1.2em]
        & \vdots \\[0.2em]
        \hdashline[0.5pt/2pt]
        \\[-1.0em]
        c_{\abs{\bm{m}}-m_r+1,r} & \cdots & c_{-\abs{\bm{n}}-m_r+1,r} \\
        c_{\abs{\bm{m}}-m_r+2,r} & \cdots & c_{-\abs{\bm{n}}-m_r+2,r} \\
        \vdots & \ddots & \vdots \\
        c_{\abs{\bm{m}}+n_r,r} & \cdots & c_{-\abs{\bm{n}}+n_r,r}
        \\[0.2em]
        \hdashline[0.5pt/2pt]
        \\[-1.0em]
        z^{-|\bm{m}|} & \cdots & z^{|\bm{n}|}
    \end{array}\right).
    \end{equation}
    Analogous formulas can be established for the type I polynomials $\bm{\Xi}_{\bm{m};\bm{n}}$ and $\bm{\Xi}^*_{\bm{m};\bm{n}}$ in the same vein as in~\cite{Kui}.

    This leads to the following determinantal expressions for all the recurrence coefficients from Section~\ref{ss:SzegoNEW} and Section \ref{ss:CC}. In what follows, we denote $wL$ to be linear functional (Christoffel transform) defined by $(wL)[w^k]:=L[w^{k+1}]$, $w\bm{L} = (wL_1,\ldots,wL_r)$, and similarly for $w^{-1}L$ and $w^{-1} \bm{L}$. Finally, we denote $T_{\bm{n};\bm{m}}(\bm{M})$ to be the matrix in~\eqref{eq:T} for any system of linear functionals $\bm{M}$.

    \begin{thm}
    \noindent\hfill
        \begin{itemize}
            \item[(i)] If $(\bm{n};\bm{m})\in\mathfrak{C}_{2r}$ is normal, then
            \begin{alignat}{2}
                \alpha_{\bm{n};\bm{m}} &= (-1)^{|\bm{n}|+|\bm{m}|} \frac{\det T_{\bm{n};\bm{m}}(w\bm{L})}{\det T_{\bm{n};\bm{m}}(\bm{L})},
                \\
                \beta_{\bm{n};\bm{m}} &=(-1)^{|\bm{n}|+|\bm{m}|} \frac{\det T_{\bm{n};\bm{m}}(w^{-1}\bm{L})}{\det T_{\bm{n};\bm{m}}(\bm{L})}.
            \end{alignat}

            \item[(ii)] If $(\bm{n};\bm{m}),(\bm{n}-\bm{e}_j;\bm{m})\in\mathfrak{C}_{2r}$ are normal, then
            \begin{equation}
                \rho_{\bm{n};\bm{m},j} =  \frac{\det T_{\bm{n}+\bm{e}_j;\bm{m}} \det T_{\bm{n}-\bm{e}_j;\bm{m}} }{(\det T_{\bm{n};\bm{m}})^2} ,
            \end{equation}
            and if $(\bm{n};\bm{m}),(\bm{n};\bm{m}-\bm{e}_j)\in\mathfrak{C}_{2r}$ are normal, then
            \begin{equation}
                \sigma_{\bm{n};\bm{m},j} = \frac{\det T_{\bm{n};\bm{m}+\bm{e}_j} \det T_{\bm{n};\bm{m}-\bm{e}_j} }{(\det T_{\bm{n};\bm{m}})^2}.
            \end{equation}

            \item[(iii)] If $(\bm{n};\bm{m}),(\bm{n}+\bm{e}_k;\bm{m}),(\bm{n}+\bm{e}_\ell;\bm{m})\in\mathfrak{C}_{2r}$  are normal, and $k < \ell$, then
            \begin{equation}
                \gamma^{k\ell}_{\bm{n};\bm{m}} = \frac{\det T_{\bm{n}+\bm{e}_\ell+\bm{e}_k;\bm{m}} \det T_{\bm{n};\bm{m}} }{\det T_{\bm{n}+\bm{e}_\ell;\bm{m}} \det T_{\bm{n}+\bm{e}_k;\bm{m}}} ,
            \end{equation}
            and if $(\bm{n};\bm{m}),(\bm{n};\bm{m}+\bm{e}_k),(\bm{n};\bm{m}+\bm{e}_\ell)\in\mathfrak{C}_{2r}$  are normal, and $k < \ell$, then
            \begin{equation}
                \eta^{k\ell}_{\bm{n};\bm{m}} = \frac{\det T_{\bm{n};\bm{m}+\bm{e}_\ell+\bm{e}_k} \det T_{\bm{n};\bm{m}} }{\det T_{\bm{n};\bm{m}+\bm{e}_\ell} \det T_{\bm{n};\bm{m}+\bm{e}_k}} .
            \end{equation}
            \item[(iv)] If $(\bm{n};\bm{m}),(\bm{n}+\bm{e}_k;\bm{m}),(\bm{n};\bm{m}+\bm{e}_k)\in\mathfrak{C}_{2r}$  are normal
            \begin{equation}
                1-\alpha_{\bm{n};\bm{m}+\bm{e}_k}\beta_{\bm{n}+\bm{e}_k;\bm{m}} = \frac{\det{T_{\bm{n}+\bm{e}_k;\bm{m}+\bm{e}_k}}\det{T_{\bm{n};\bm{m}}}}{\det{T_{\bm{n}+\bm{e}_k;\bm{m}}}\det{T_{\bm{n};\bm{m}+\bm{e}_k}}}.
            \end{equation}
            
        \end{itemize}
    \end{thm}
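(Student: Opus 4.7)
The plan is to derive everything from the Heine formulas \eqref{eq:HeineII} and \eqref{eq:HeineII*}, combined with the ratio expressions for the recurrence coefficients established in Sections \ref{ss:SzegoNEW}, \ref{ss:CC}, and \ref{ss:extra}.

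For (i), I would expand the determinant in \eqref{eq:HeineII} along its last row and isolate the coefficient of $z^{-\abs{\bm{m}}}$. Up to the overall cofactor sign $(-1)^{\abs{\bm{n}}+\abs{\bm{m}}}$, this coefficient equals $1/\det T_{\bm{n};\bm{m}}$ times the minor obtained by deleting the last row and the first column. A direct inspection shows that this minor coincides with $\det T_{\bm{n};\bm{m}}(w\bm{L})$: removing the leftmost column shifts each column index $\ell$ by $+1$, which is precisely the effect of replacing $\bm{L}$ by the Christoffel transform $w\bm{L}$. The formula for $\beta_{\bm{n};\bm{m}}$ is proved in the same way, by expanding \eqref{eq:HeineII*} and extracting the coefficient of $z^{\abs{\bm{n}}}$; the minor turns out to be $\det T_{\bm{n};\bm{m}}(w^{-1}\bm{L})$.

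For (ii), the key step is to compute the normalization constant $h_{\bm{n};\bm{m},j} := L_j[\Phi_{\bm{n};\bm{m}}(w)w^{-n_j}]$. Applying $L_j[\,\cdot\, w^{-n_j}]$ termwise to the last row of \eqref{eq:HeineII} turns that row into the moment row $(c_{n_j+\abs{\bm{m}},j}, \ldots, c_{n_j-\abs{\bm{n}},j})$. After cycling this row upward to the bottom of block $j$, the resulting matrix is exactly $T_{\bm{n}+\bm{e}_j;\bm{m}}$, and the cycle contributes the sign $\epsilon_j := (-1)^{\sum_{j'>j}(n_{j'}+m_{j'})}$. Hence $h_{\bm{n};\bm{m},j} = \epsilon_j\, \det T_{\bm{n}+\bm{e}_j;\bm{m}}/\det T_{\bm{n};\bm{m}}$. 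The formula for $\rho_{\bm{n};\bm{m},j}$ then follows from \eqref{eq:rho}, since $\epsilon_j$ depends only on $n_{j'},m_{j'}$ for $j'>j$ and is therefore unchanged when $\bm{n}$ is replaced by $\bm{n}-\bm{e}_j$, so the two signs in the ratio cancel. The formula for $\sigma_{\bm{n};\bm{m},j}$ is obtained identically, now applying $L_j[\,\cdot\, w^{m_j}]$ to \eqref{eq:HeineII*} and invoking \eqref{eq:sigma}.

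For (iii), I would use \eqref{eq:gamma} to write $\gamma^{k\ell}_{\bm{n};\bm{m}} = h_{\bm{n}+\bm{e}_k;\bm{m},\ell}/h_{\bm{n};\bm{m},\ell}$ and apply the formula for $h$ from (ii) at both indices. The resulting sign $\epsilon_\ell$ depends on $n_{j'}+m_{j'}$ for $j'>\ell$; under the hypothesis $k<\ell$ the perturbation $\bm{n}\mapsto \bm{n}+\bm{e}_k$ does not affect those coordinates, so the signs cancel and we obtain the stated quotient of determinants. The derivation of $\eta^{k\ell}_{\bm{n};\bm{m}}$ is parallel via \eqref{eq:eta} and \eqref{eq:HeineII*}. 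Finally, (iv) is immediate from \eqref{eq:one minus alphabeta}, which expresses the quantity in question as $h_{\bm{n};\bm{m}+\bm{e}_k,k}/h_{\bm{n};\bm{m},k}$; substituting the formula from (ii) and noting that both copies of $\epsilon_k$ agree, since only $\bm{m}$ is modified, yields the claim.

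The main obstacle is the sign bookkeeping for the row cycling in step (ii): one must verify that the explicit sign $\epsilon_j$ depends only on the indices lying below $j$, so that it cancels uniformly in each of the ratios in (ii)--(iv). This is automatic in (ii) and (iv), and it is precisely what forces the hypothesis $k<\ell$ in (iii).
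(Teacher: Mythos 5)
Your proposal is correct and follows essentially the same route as the paper: extract the $z^{-\abs{\bm{m}}}$ and $z^{\abs{\bm{n}}}$ coefficients from the Heine formulas \eqref{eq:HeineII}--\eqref{eq:HeineII*} for (i), establish the normalization identities \eqref{eq:norm1}--\eqref{eq:norm2} by applying $L_j[\,\cdot\,w^{-n_j}]$ (resp.\ $L_j[\,\cdot\,w^{m_j}]$) to the monomial row and recycling it into block $j$, and then combine with \eqref{eq:rho}, \eqref{eq:sigma}, \eqref{eq:gamma}, \eqref{eq:eta}, and \eqref{eq:one minus alphabeta}. You additionally make explicit the sign bookkeeping $(-1)^{\sum_{s>j}(n_s+m_s)}$ and its cancellation in the ratios (including why $k<\ell$ is needed in (iii)), which the paper leaves implicit.
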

\begin{proof}
    For (i) and (ii), we simply need to find the $z^{-|\bm{m}|}$ coefficient of~\eqref{eq:HeineII} and $z^{|\bm{n}|}$ coefficient of~\eqref{eq:HeineII*}.
    
    Now note that from~\eqref{eq:HeineII} and~\eqref{eq:HeineII*} we obtain
    \begin{align}
    \label{eq:norm1}
        L_j[\Phi_{\bm{n};\bm{m}}(w)w^{-n_j}]
        & = (-1)^{\sum_{s=j+1}^r (n_s+m_s)} \frac{\det T_{\bm{n}+\bm{e}_j;\bm{m}}}{\det T_{\bm{n};\bm{m}}},
        \\
        \label{eq:norm2}
        L_j[\Phi^*_{\bm{n};\bm{m}}(w)w^{m_j}]
        & =  (-1)^{\sum_{s=1}^{j-1} (n_s+m_s)} \frac{\det T_{\bm{n};\bm{m}+\bm{e}_j}}{\det T_{\bm{n};\bm{m}}}.
    \end{align}
    Then (ii) follows by combining this with~\eqref{eq:rho} and~\eqref{eq:sigma}, (iii) follows by combining with \eqref{eq:gamma} and \eqref{eq:eta}, and (iv) follows by combining with \eqref{eq:one minus alphabeta}.
\end{proof}


\section{The Christoffel--Darboux formula}\label{ss:CD}

We can establish a generalized Christoffel--Darboux formula by following the same argument as in~\cite[Thm 6.1]{KVMOPUC}. In this formulation, the paths may originate from points other than $\bm{0}$, and the resulting expression takes a more streamlined form due to our choice of definition of $\bm\Xi_{\bm{n};\bm{m}}$, as discussed in Remarks~\ref{rem:annoying1}, \ref{rem:annoying2}.
\begin{thm} 
    Let $\bm{m}\in\N^r$ be fixed, and consider $(\bm{n}_k;\bm{m})_{k = 0}^{N}$ to be a path of $\mathfrak{C}_{2r}$-multi-indices, 
    such that $\bm{n}_0 = \bm{-\bm{m}}$, 
    and $\bm{n}_{k+1}-\bm{n}_k = \bm{e}_{l_k}$ for some $1\le l_k\le r$. 
    Assume all multi-indices on the path are normal, along with all the neighbouring indices that belong to $\mathfrak{C}_{2r}$. Then we have the Christoffel--Darboux formulas
    \begin{equation}\label{eq:cd formula1}
        \begin{aligned}
    (\xi - z)& \sum_{k = 0}^{N-1} \Phi_{\bm{n}_k;\bm{m}}(z)  \bm\Xi_{\bm{n}_{k+1};\bm{m}}(\xi)
    \\
    &= \Phi^*_{\bm{n}_N;\bm{m}}(z) \bm\Xi^*_{\bm{n}_N;\bm{m}}(\xi) - z\xi \sum_{j = 1}^{r}\rho_{\bm{n}_N;\bm{m},j}\Phi_{\bm{n}_N-\bm{e}_j;\bm{m}}(z) \bm\Xi_{\bm{n}_N+\bm{e}_j;\bm{m}}(\xi)
    \end{aligned}
    \end{equation}
    and
    \begin{equation}\label{eq:cd formula2}
    \begin{aligned}
    (z-\xi)& \sum_{k = 0}^{N-1} \Phi^*_{\bm{m};\bm{n}_k}(z)  \bm\Xi^*_{\bm{m};\bm{n}_{k+1}}(\xi) 
    \\
    &= z\xi \Phi_{\bm{m};\bm{n}_N}(z)\bm\Xi_{\bm{m};\bm{n}_N}(\xi) -  \sum_{j = 1}^{r}\sigma_{\bm{m};\bm{n}_N,j}\Phi^*_{\bm{m};\bm{n}_N-\bm{e}_j}(z) \bm\Xi^*_{\bm{m};\bm{n}_N+\bm{e}_j}(\xi).
    \end{aligned}
    \end{equation}
    \begin{proof}
        In the expression
        $$
         \Phi_{\bm{n}_k;\bm{m}}(z)  \bm\Xi_{\bm{n}_{k+1};\bm{m}}(\xi) - z\xi^{-1} \Phi_{\bm{n}_k;\bm{m}}(z)  \bm\Xi_{\bm{n}_{k+1};\bm{m}}(\xi)
        $$
        use the Szeg\H{o} recurrence~\eqref{eq:SzegoII2} followed by~\eqref{eq:SzegoI1} on the first term and~\eqref{eq:SzegoI2} followed by~\eqref{eq:SzegoII1}. Then the arguments from the proof ~\cite[Thm 6.1]{KVMOPUC} go through in a straightforward manner.
        
        The second formula~\eqref{eq:cd formula2} follows from~\eqref{eq:cd formula1} after applying Proposition~\ref{thm:reversal vs star polynomials} and Remark~\ref{rem:reversal}.
    \end{proof}
\end{thm}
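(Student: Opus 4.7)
The plan is to establish~\eqref{eq:cd formula1} by a telescoping argument along the path, and then deduce~\eqref{eq:cd formula2} from it via the duality of Proposition~\ref{thm:reversal vs star polynomials} together with Remark~\ref{rem:reversal}. Set
\[
g_k(z,\xi) := \Phi^*_{\bm{n}_k;\bm{m}}(z)\,\bm{\Xi}^*_{\bm{n}_k;\bm{m}}(\xi) - z\xi\sum_{j=1}^r \rho_{\bm{n}_k;\bm{m},j}\,\Phi_{\bm{n}_k-\bm{e}_j;\bm{m}}(z)\,\bm{\Xi}_{\bm{n}_k+\bm{e}_j;\bm{m}}(\xi).
\]
The crux is to verify the one-step identity
\[
(\xi-z)\,\Phi_{\bm{n}_k;\bm{m}}(z)\,\bm{\Xi}_{\bm{n}_{k+1};\bm{m}}(\xi) = g_{k+1}(z,\xi)-g_k(z,\xi),\qquad k=0,\ldots,N-1.
\]
Summing then telescopes, and the required right-hand side emerges once one checks that $g_0(z,\xi)=0$: indeed $\bm{\Xi}^*_{-\bm{m};\bm{m}}\equiv\bm{0}$ by the base-case convention following Proposition~\ref{prop:normality}, and each multi-index $(-\bm{m}-\bm{e}_j;\bm{m})$ lies outside $\mathfrak{C}_{2r}$, so $\Phi_{-\bm{m}-\bm{e}_j;\bm{m}}\equiv 0$ and $\rho_{-\bm{m};\bm{m},j}=0$ by the convention stated after Theorem~\ref{thm:Szego2}.

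To prove the one-step identity, set $\ell:=l_k$ so that $\bm{n}_{k+1}=\bm{n}_k+\bm{e}_\ell$, and split the left-hand side into the $\xi$- and $z$-parts. In the $\xi$-part use~\eqref{eq:SzegoII2} to expand $\Phi_{\bm{n}_k;\bm{m}}(z)$ followed by~\eqref{eq:SzegoI1} applied with direction $\ell$ to rewrite $\bm{\Xi}_{\bm{n}_{k+1};\bm{m}}(\xi)$; in the $z$-part use~\eqref{eq:SzegoI2} on $\bm{\Xi}_{\bm{n}_{k+1};\bm{m}}(\xi)$ followed by~\eqref{eq:SzegoII1} at index $\bm{n}_{k+1}$ with direction $\ell$ (so that $\bm{n}_{k+1}-\bm{e}_\ell=\bm{n}_k$) to rewrite $\Phi_{\bm{n}_k;\bm{m}}(z)$. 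After collecting terms, the products of the form $\Phi^*\bm{\Xi}^*$ at the two consecutive indices $\bm{n}_k$ and $\bm{n}_{k+1}$ appear with the correct sign, the $\alpha\beta$-contributions collapse via~\eqref{eq:compatibility condition 2}, and the remaining mixed terms reorganize into the $\rho$-sums that form $g_{k+1}-g_k$. This is precisely the strategy carried out in \cite[Thm 6.1]{KVMOPUC} for the specialization $\bm{m}=\bm{0}$; here the fixed multi-index $\bm{m}$ plays the role of a passive spectator and the argument extends verbatim.

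The main obstacle is the algebraic bookkeeping required to match the cross-terms $z\xi\,\rho\,\Phi\,\bm{\Xi}$ that emerge at \emph{both} indices $\bm{n}_k$ and $\bm{n}_{k+1}$ after the double recurrence substitution. Only the direction $j=\ell$ cancels directly; for $j\ne\ell$ one must invoke the compatibility relations~\eqref{eq:compatibility condition 1}, \eqref{eq:another comp}, and especially~\eqref{eq:compatibility condition 3.1}, which transport $\rho_{\bm{n}_k;\bm{m},j}$ to $\rho_{\bm{n}_{k+1};\bm{m},j}$ through the $\gamma$-coefficients. Once~\eqref{eq:cd formula1} is established, the second identity~\eqref{eq:cd formula2} follows by applying the involution of Proposition~\ref{thm:reversal vs star polynomials} (which swaps the type II and II${}^*$ families, the type I and I${}^*$ families, and the two multi-index slots $\bm{n}\leftrightarrow\bm{m}$), together with Remark~\ref{rem:reversal} (which exchanges $\rho$ and $\sigma$), and conjugating both sides.
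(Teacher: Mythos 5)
Your proposal is correct and follows essentially the same route as the paper: the one-step telescoping identity obtained by applying \eqref{eq:SzegoII2} then \eqref{eq:SzegoI1} to one copy of the summand and \eqref{eq:SzegoI2} then \eqref{eq:SzegoII1} to the other (the paper works with $(1-z\xi^{-1})$ rather than $(\xi-z)$, a harmless factor of $\xi$), with the cross-terms matched via \eqref{eq:CCII}, \eqref{eq:CCI} and \eqref{eq:compatibility condition 3.1}, the base case handled by the conventions at $(\bm{-m};\bm{m})$, and the second formula deduced by the $\sharp$-duality of Proposition~\ref{thm:reversal vs star polynomials} and Remark~\ref{rem:reversal}. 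The only cosmetic difference is that your invocation of \eqref{eq:compatibility condition 2}, \eqref{eq:compatibility condition 1} and \eqref{eq:another comp} is not actually needed in this organization of the algebra --- the $\Phi^*\bm{\Xi}^*$ cross-terms cancel identically --- but this does not affect correctness.
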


\section{Szeg\H{o} Mapping}\label{ss:SzegoMap}

Let us recall the main notions from the theory of orthogonal and multiple orthogonal polynomials on the real line with respect to moment functionals acting on the space of complex polynomials 
(with nonnegative powers, in contrast to the Laurent setting~\eqref{eq:moments linear functional}). To this end, let $M$ be defined by
\begin{equation}\label{eq:moments linear functional real}
         M[x^{k}] = m_{k} , \qquad k\in\N, 
\end{equation}
where $m_{k}$ are arbitrary complex numbers. 

Often $M$ is induced by a probability measure $\gamma$ on the real line $\R$ with all finite moments
\begin{equation}\label{eq:functionals given by measures real}
        M[x^{k}] = \int_{\R} x^{k} d\gamma(x), \qquad 
        k \in \N. 
\end{equation}

Orthogonal polynomials with respect to $M$ are defined by $\deg P_n = n$ and 
$$
M[P_n(x) x^k] = 0, \qquad k=0,1,\ldots,n-1,
$$
Recall~\cite{Chihara} that a moment functional $M$ is called quasi-definite if there is a unique monic $P_n$ and $\deg{P_n} = n$, for each positive integer $n$. This is equivalent to all Hankel matrices $(m_{j+\ell})_{j,\ell=0}^{n-1}$ being invertible. This is automatic if $M$ is associated with a positive measure on $\R$ with finite moments and infinite support, as in~\eqref{eq:functionals given by measures real}.

Similarly, a moment functional $L$ on the space of Laurent polynomials~\eqref{eq:moments linear functional} is called quasi-definite if a monic polynomial $\Phi_n$ with $\deg \Phi_n = n$ and
$$
L[\Phi_n(z) z^{-k}] = 0, \qquad k=0,1,\ldots,n-1,
$$ 
exists and is unique for every $n\in\N$. This is equivalent to all Toeplitz matrices $(c_{j-\ell})_{j,\ell=0}^{n-1}$ being invertible and is automatic if $L$ is associated with a positive measure with infinite support on $\T$ as in~\eqref{eq:functionals given by measures}.

Given a system $\bm{M} = (M_1,\dots,M_r)$ of moment functionals~\eqref{eq:moments linear functional real}, the type II multiple orthogonal polynomials $P_{\bm{n}}$ with respect to the multi-index $\bm{n}$ and the system $\bm{M}$ are non-zero polynomials of at most degree $\abs{\bm{n}}$, satisfying the orthogonality relations
\begin{equation}\label{eq:MOPRLIIreal}
    M_j[P_{\bm{n}}(x)x^k] = 0, \qquad k = 0,\dots,n_j-1, \qquad j = 1,\dots,r.
\end{equation}
Such polynomials always exists, and we say that the index $\bm{n}$ is normal if there exists a unique $P_{\bm{n}}$ with $x^{\abs{\bm{n}}}$-coefficient equal to $1$. 

The type I polynomials are non-zero vectors $\bm{A}_{\bm{n}} = (A_{\bm{n},1},\dots,A_{\bm{n},r})$, except for $\bm{A}_{\bm{0}} = \bm{0}$, where $A_{\bm{n},j}$ are polynomials of degree at most $n_j-1$ for each $j = 1,\dots,r$, and
\begin{equation}\label{eq:MOPRLIreal}
    \sum_{j = 1}^r M_j[A_{\bm{n},j}(x)x^k ] = 
        0, \qquad k = 0,\dots,\abs{\bm{n}}-2.
\end{equation}
$\bm{n} \neq \bm{0}$ is normal if and only if there exists a unique $\bm{A}_{\bm{n}}$ with
\begin{equation}\label{eq:normalization type I moprl}
    \sum_{j = 1}^r M_j[A_{\bm{n},j}(x) x^{\abs{\bm{n}}-1}] = 1. 
\end{equation}
When $\bm{n}$ is normal we always work with the above normalizations for the type I and type II polynomials. 

Assuming all the indices are normal (we then say that the system is perfect), there exist~\cite{NNRR,Ismail} coefficients $a_{\bm{n},j}, b_{\bm{n},j}$, called the nearest neighbour recurrence coefficients, so that 
\begin{equation}\label{eq:one nnr}
xP_{\bm{n}}(x) = P_{\bm{n} + \bm{e}_k}(x) + b_{\bm{n},k}P_{\bm{n}}(x) + \sum_{j = 1}^{r}a_{\bm{n},j}P_{\bm{n} - \bm{e}_j}(x),
\end{equation}
as well as
\begin{equation}\label{eq:type 1 recurrence relation}
x\bm{A}_{\bm{n}}(x) = \bm{A}_{\bm{n} - \bm{e}_k}(x) + b_{\bm{n}-\bm{e}_k,k}\bm{A}_{\bm{n}}(x) + \sum_{j = 1}^{r}a_{\bm{n},j}\bm{A}_{\bm{n} + \bm{e}_j}(x).
\end{equation}


Recall the definition of the Szeg\H{o} mapping in Section~\ref{ss:intro3} for the case of measures. It is easy to extend this construction to linear functionals. Indeed, the equalities
\begin{equation}
    \label{eq:SzegoMapFunctionals}
    L[(w+w^{-1})^k] = M[x^k], \qquad k\in\N
\end{equation}
set up one-to-one correspondence between all linear functionals $M$ on the space of polynomials (i.e.,~\eqref{eq:moments linear functional real}), and all linear functionals $L$ on the space of Laurent polynomials (i.e., ~\eqref{eq:moments linear functional}) with the symmetry
\begin{equation}\label{eq:symmetricFunctionals}
    L[w^k]=L[w^{-k}], \qquad k\in\N.
\end{equation}
We denote this correspondence by $L = \operatorname{Sz}(M)$ and its inverse by $M = \operatorname{Sz}^{-1}(L)$.



Functionals on Laurent polynomials with the symmetry~\eqref{eq:symmetricFunctionals} exhibit a number of properties similar to those that we observed in Proposition~\ref{thm:reversal vs star polynomials} and Remark~\ref{rem:reversal}. 

\begin{prop}\label{rem:Rsymmetry}
    Let $L_1,\ldots,L_r$ be linear functionals on the space of Laurent polynomials, each satisfying the symmetry condition~\eqref{eq:symmetricFunctionals}.
    
    Then $(\bm{n};\bm{m})$ is normal if and only if $(\bm{m};\bm{n})$ is normal, and  
    \begin{alignat}{1}
        & \Phi_{\bm{n};\bm{m}}^*(z)=\Phi_{\bm{m};\bm{n}}(1/z),
        \qquad
        \bm{\Xi}^*_{\bm{n};\bm{m}}(z) = \bm{\Xi}_{\bm{m};\bm{n}}(1/z), 
        \\
        & \label{eq:szego symmetry}
        \alpha_{\bm{n};\bm{m}} = {\beta}_{\bm{m};\bm{n}},
        \qquad
        \rho_{\bm{n};\bm{m},j}= \sigma_{\bm{m};\bm{n},j},
        \qquad
        \gamma^{kl}_{\bm{n};\bm{m},j}= \eta^{kl}_{\bm{m};\bm{n},j},
    \end{alignat}
    hold.
\end{prop}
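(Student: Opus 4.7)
The plan is to exploit the involution $f^\flat(z) := f(1/z)$ on Laurent polynomials. The hypothesis~\eqref{eq:symmetricFunctionals} is precisely the statement that $L_j[f^\flat] = L_j[f]$ for every Laurent polynomial $f$, since expanding $f = \sum_k a_k z^k$ turns both sides into $\sum_k a_k c_{k,j}$ by the relation $c_{k,j} = c_{-k,j}$.

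First I would establish the normality equivalence. Reading off~\eqref{eq:T}, the $((j,a),b)$-entry of $T_{\bm{n};\bm{m}}(\bm{L})$ is $c_{|\bm{m}| - m_j + a - b,\,j}$, so reversing the rows within each $j$-block and reversing all columns turns $T_{\bm{n};\bm{m}}(\bm{L})$ into the matrix with entry $c_{n_j - |\bm{n}| + b - a,\,j}$. The hypothesis $c_{k,j} = c_{-k,j}$ identifies this with the $((j,a),b)$-entry $c_{|\bm{n}| - n_j + a - b,\,j}$ of $T_{\bm{m};\bm{n}}(\bm{L})$. Since row/column reversals multiply the determinant only by a sign, $\det T_{\bm{n};\bm{m}}(\bm{L})$ vanishes if and only if $\det T_{\bm{m};\bm{n}}(\bm{L})$ vanishes, and the normality equivalence follows by Proposition~\ref{prop:normality}.

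For the polynomial identities, set $g(z) := \Phi^*_{\bm{n};\bm{m}}(1/z)$. From~\eqref{eq:moprlII*span}, $g$ is of the form $z^{|\bm{m}|} + \dots + \beta_{\bm{n};\bm{m}} z^{-|\bm{n}|}$, matching the shape~\eqref{eq:moprlIIspan} of $\Phi_{\bm{m};\bm{n}}$. For $k \in \{-n_j, \dots, m_j - 1\}$ the identity $g(w) w^{-k} = (\Phi^*_{\bm{n};\bm{m}}(w) w^k)^\flat$ combined with $\flat$-invariance of each $L_j$ gives $L_j[g(w) w^{-k}] = L_j[\Phi^*_{\bm{n};\bm{m}}(w) w^k] = 0$, since $-k \in \{-m_j+1, \dots, n_j\}$ lies in the orthogonality range~\eqref{eq:moprlII*} of $\Phi^*_{\bm{n};\bm{m}}$. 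Normality of $(\bm{m};\bm{n})$ then forces $g = \Phi_{\bm{m};\bm{n}}$, proving $\Phi^*_{\bm{n};\bm{m}}(z) = \Phi_{\bm{m};\bm{n}}(1/z)$. The argument for $\bm{\Xi}^*_{\bm{n};\bm{m}}(z) = \bm{\Xi}_{\bm{m};\bm{n}}(1/z)$ is identical with the ranges from~\eqref{eq:moprlIspan}--\eqref{eq:moprlInorm} replacing those of $\Phi$. Comparing $z^{|\bm{m}|}$-coefficients in the first polynomial identity immediately gives $\alpha_{\bm{n};\bm{m}} = \beta_{\bm{m};\bm{n}}$.

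The recurrence-coefficient identities then follow by substituting the polynomial identities into the closed-form expressions~\eqref{eq:rho},~\eqref{eq:sigma},~\eqref{eq:gamma},~\eqref{eq:eta}. For instance,
\[
L_j[\Phi^*_{\bm{m};\bm{n}}(w) w^{n_j}] = L_j[\Phi_{\bm{n};\bm{m}}(1/w) w^{n_j}] = L_j[(\Phi_{\bm{n};\bm{m}}(w) w^{-n_j})^\flat] = L_j[\Phi_{\bm{n};\bm{m}}(w) w^{-n_j}],
\]
and the analogous computation for the denominator in~\eqref{eq:sigma} yields $\sigma_{\bm{m};\bm{n},j} = \rho_{\bm{n};\bm{m},j}$. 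The $\eta \leftrightarrow \gamma$ identity proceeds identically by invoking~\eqref{eq:gamma} and~\eqref{eq:eta}. No individual step is genuinely hard; the only real subtlety is keeping the orthogonality ranges and index shifts aligned when $\flat$ converts $w^k$ into $w^{-k}$.
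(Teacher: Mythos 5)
Your proof is correct and follows essentially the same route the paper takes implicitly (the paper omits a proof, pointing to the analogous Proposition~\ref{thm:reversal vs star polynomials}, whose $\sharp$-argument you reproduce with the unconjugated reversal $f(z)\mapsto f(1/z)$): block-row/column reversal of $T_{\bm{n};\bm{m}}$ for normality, then shape plus orthogonality-range matching and uniqueness for the polynomial identities, then substitution into \eqref{eq:rho}--\eqref{eq:eta}. One trivial slip: to extract $\alpha_{\bm{n};\bm{m}}=\beta_{\bm{m};\bm{n}}$ you should compare the $z^{|\bm{n}|}$-coefficients (equivalently the $z^{-|\bm{n}|}$-coefficient of $\Phi_{\bm{m};\bm{n}}$), not the $z^{|\bm{m}|}$-coefficients, which are both $1$.
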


In particular, in the special case when each $L_j$ corresponds to integration with respect to a positive measure on the unit circle~\eqref{eq:functionals given by measures} that is invariant under $e^{i\theta}\mapsto e^{-i\theta}$, then Propositions~\ref{thm:reversal vs star polynomials} and~\ref{rem:Rsymmetry} together imply that $\alpha_{\bm{n};\bm{m}} = \beta_{\bm{m};\bm{n}}$ and $\rho_{\bm{n};\bm{m}} = \sigma_{\bm{m};\bm{n}}$ are real.
Now we are ready to establish a relationship between multiple orthogonality on the real line and multiple orthogonality on the unit circle. For $r=1$ this relationship goes back to Szeg\H{o} \cite{Szego} (for functionals, it is effectively in~\cite{GeronimusRelations,GeronimusBook}, see also~\cite[Sect. 13]{OPUC2} and~\cite{DerSim18}). 

\begin{thm}\label{thm:szego mapping type II}
    Assume that $\bm{L} = (L_1,\dots,L_r)$ is a system of linear functionals on $\T$ satisfying~\eqref{eq:symmetricFunctionals} and let $\bm{M}$ be defined by $M_j=\operatorname{Sz}^{-1}(L_j)$, $j = 1,\dots,r$.

    If $(\bm{n};\bm{n})$ and $(\bm{n}+\bm{e}_j;\bm{n})$ are normal for $\bm{L}$ for all $\bm{n}\in\N^r$ and $j=1,\ldots,r$, then $\bm{M}$ is perfect (that is, every $\bm{n}\in\N^r$ is normal for $\bm{M}$), and
    \begin{align}\label{eq:SzegoPolynomialRelation}
          P_{\bm{n}}(z+z^{-1}) & = \frac{1}{(1+\alpha_{\bm{n};\bm{n}})} \Big( \Phi_{\bm{n};\bm{n}}(z) + \Phi_{\bm{n};\bm{n}}(1/z) \Big) 
          \\
          \label{eq:SzegoPolynomialRelation2}
          & = \Phi_{\bm{n};\bm{n}-\bm{e}_j}(z) + \Phi_{\bm{n};\bm{n}-\bm{e}_j}(1/z).
        \end{align}
        Furthermore, $\alpha_{\bm{n};\bm{n}}$ is necessarily $\ne -1$. 
\end{thm}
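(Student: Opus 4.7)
My plan is to induct on $|\bm{n}|$, establishing at each index $\bm{n}$ three statements simultaneously: normality of $\bm{n}$ for $\bm{M}$; identity~\eqref{eq:SzegoPolynomialRelation2} for every $j$ with $n_j\geq 1$; and the nonvanishing $\alpha_{\bm{n};\bm{n}}\neq -1$, which will then upgrade the corresponding identity for $\tilde P(z+z^{-1}):=\Phi_{\bm{n};\bm{n}}(z)+\Phi_{\bm{n};\bm{n}}(1/z)$ to \eqref{eq:SzegoPolynomialRelation}. The base case $\bm{n}=\bm{0}$ is immediate, since $P_{\bm{0}}=\Phi_{\bm{0};\bm{0}}=1$ and $\alpha_{\bm{0};\bm{0}}=1$.

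\textbf{Existence.} For the inductive step I fix any $j$ with $n_j\geq 1$ and form
\[
S_j(z):=\Phi_{\bm{n};\bm{n}-\bm{e}_j}(z)+\Phi_{\bm{n};\bm{n}-\bm{e}_j}(1/z).
\]
Here $(\bm{n};\bm{n}-\bm{e}_j)$ is of the form $(\bm{n}'+\bm{e}_j;\bm{n}')$ with $\bm{n}'=\bm{n}-\bm{e}_j$, hence normal for $\bm{L}$ directly by hypothesis, so $\Phi_{\bm{n};\bm{n}-\bm{e}_j}$ is well defined. The Laurent polynomial $S_j$ is palindromic in $\operatorname{span}\{z^k:|k|\leq|\bm{n}|\}$ with $z^{|\bm{n}|}$-coefficient equal to $1$ (the copy $\Phi_{\bm{n};\bm{n}-\bm{e}_j}(1/z)$ contributes nothing at $z^{|\bm{n}|}$, since $\Phi_{\bm{n};\bm{n}-\bm{e}_j}$ has no $z^{-|\bm{n}|}$ term), so it can be written uniquely as $\tilde Q_j(z+z^{-1})$ for a monic polynomial $\tilde Q_j$ of degree $|\bm{n}|$. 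Using the symmetry $L_l[f(w)]=L_l[f(1/w)]$ (an immediate consequence of~\eqref{eq:symmetricFunctionals}) together with the $\bm{L}$-orthogonality of $\Phi_{\bm{n};\bm{n}-\bm{e}_j}$, a direct binomial expansion of $(w+w^{-1})^k$ will give $M_l[\tilde Q_j(x)\,x^k]=L_l[S_j(w)(w+w^{-1})^k]=0$ for $0\leq k\leq n_l-1$ and all $l$, producing a monic candidate for $P_{\bm{n}}$.

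\textbf{Uniqueness and identification.} To establish normality I take any $R$ of degree $\leq|\bm{n}|-1$ satisfying the $\bm{M},\bm{n}$ orthogonality; restricting to the conditions indexed by $\bm{n}-\bm{e}_j$ and invoking the inductive hypothesis on that index yields $R=c\,P_{\bm{n}-\bm{e}_j}$. The leftover equation $M_j[R(x)\,x^{n_j-1}]=0$ forces $c\,\lambda_{\bm{n},j}=0$ with $\lambda_{\bm{n},j}:=M_j[P_{\bm{n}-\bm{e}_j}(x)\,x^{n_j-1}]$, and I will compute $\lambda_{\bm{n},j}$ by inserting the inductive form of~\eqref{eq:SzegoPolynomialRelation} for $P_{\bm{n}-\bm{e}_j}$; $L_j$-symmetry folds the two $\Phi$-contributions into one, and the orthogonality of $\Phi_{\bm{n}-\bm{e}_j;\bm{n}-\bm{e}_j}$ kills every interior term in the binomial expansion of $(w+w^{-1})^{n_j-1}$, leaving
\[
\lambda_{\bm{n},j}=\frac{2}{1+\alpha_{\bm{n}-\bm{e}_j;\bm{n}-\bm{e}_j}}\,L_j\big[\Phi_{\bm{n}-\bm{e}_j;\bm{n}-\bm{e}_j}(w)\,w^{-(n_j-1)}\big].
\]
The prefactor is nonzero by the inductive claim, and the bracket is nonzero by~\eqref{eq:norm1} combined with the normality of $(\bm{n};\bm{n}-\bm{e}_j)$. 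Hence $c=0$, proving normality of $\bm{n}$ for $\bm{M}$ and identifying $\tilde Q_j$ with $P_{\bm{n}}$, which is~\eqref{eq:SzegoPolynomialRelation2}. Applying the same scheme to $\tilde P$ shows that $\tilde P$ is a polynomial of degree $\leq|\bm{n}|$ with leading coefficient $1+\alpha_{\bm{n};\bm{n}}$ satisfying the orthogonality; normality then gives $\tilde P=(1+\alpha_{\bm{n};\bm{n}})P_{\bm{n}}$, yielding~\eqref{eq:SzegoPolynomialRelation} as soon as the prefactor is nonzero.

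\textbf{Main obstacle.} I expect the nonvanishing $\alpha_{\bm{n};\bm{n}}\neq -1$ to be the genuinely delicate step. I argue by contradiction: if $\alpha_{\bm{n};\bm{n}}=-1$, then $\tilde P$ has degree $<|\bm{n}|$ and satisfies the orthogonality, so by normality $\tilde P=0$, which means $\Phi_{\bm{n};\bm{n}}^*(z)=\Phi_{\bm{n};\bm{n}}(1/z)=-\Phi_{\bm{n};\bm{n}}(z)$ (using Proposition~\ref{rem:Rsymmetry}). Substituting this into the type II Szeg\H{o} recurrence~\eqref{eq:SzegoII2} for $\Phi_{\bm{n};\bm{n}}$ collapses the $\alpha_{\bm{n};\bm{n}}\Phi_{\bm{n};\bm{n}}^*$ term against the left-hand side and leaves
\[
\sum_{j:\,n_j\geq 1}\rho_{\bm{n};\bm{n},j}\,z\,\Phi_{\bm{n}-\bm{e}_j;\bm{n}}(z)=0.
\]
Pairing with $L_k[\,\cdot\,w^{-n_k}]$ for any $k$ with $n_k\geq 1$: for $j\neq k$ the $L_k$-orthogonality of $\Phi_{\bm{n}-\bm{e}_j;\bm{n}}$ (whose $k$-th upper index remains $n_k$) annihilates the term, while the surviving $j=k$ contribution is $\rho_{\bm{n};\bm{n},k}\cdot L_k[\Phi_{\bm{n}-\bm{e}_k;\bm{n}}(w)\,w^{-(n_k-1)}]$, whose second factor is nonzero by~\eqref{eq:norm1} and the normality of $(\bm{n};\bm{n})$. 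This forces $\rho_{\bm{n};\bm{n},k}=0$, contradicting the normality of $(\bm{n}+\bm{e}_k;\bm{n})$ via Proposition~\ref{prop:general hermite pade measures type I}(ii) and closing the induction.
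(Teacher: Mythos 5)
Your proof is correct, but it takes a genuinely different route from the paper's. The paper works non-inductively: it forms $Q_{\bm{n}}(z+z^{-1})=\Phi_{\bm{n};\bm{n}}(z)+\Phi_{\bm{n};\bm{n}}(1/z)$, checks the type II orthogonality for $\bm{M}$, computes $M_j[Q_{\bm{n}}(x)x^{n_j}]=2L_j[\Phi_{\bm{n};\bm{n}}(w)w^{-n_j}]=2\det T_{\bm{n}+\bm{e}_j;\bm{n}}/\det T_{\bm{n};\bm{n}}\ne 0$, and then invokes the external perfectness criterion of \cite[Thm 2.19]{KVChristoffel} to conclude $\bm{M}$ is perfect; the nonvanishing of $1+\alpha_{\bm{n};\bm{n}}$ and the identity \eqref{eq:SzegoPolynomialRelation} then drop out immediately because a normal index cannot support a nonzero solution of degree $<\abs{\bm{n}}$ with $M_j[Q_{\bm{n}}x^{n_j}]\ne 0$. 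You instead re-prove the relevant special case of that criterion by hand via an induction on $\abs{\bm{n}}$, using the candidate $\Phi_{\bm{n};\bm{n}-\bm{e}_j}(z)+\Phi_{\bm{n};\bm{n}-\bm{e}_j}(1/z)$ for existence and the one-dimensionality of the $(\bm{n}-\bm{e}_j)$-solution space for uniqueness — this is self-contained (no external citation) at the cost of more bookkeeping, and it has the nice side effect of producing \eqref{eq:SzegoPolynomialRelation2} as the primary object rather than as an afterthought. All the normality prerequisites you invoke (e.g.\ that $(\bm{n}-\bm{e}_j;\bm{n})$ is normal, via Proposition~\ref{rem:Rsymmetry} and the hypothesis on $((\bm{n}-\bm{e}_j)+\bm{e}_j;\bm{n}-\bm{e}_j)$) do check out. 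One remark: your contradiction argument for $\alpha_{\bm{n};\bm{n}}\ne -1$ through \eqref{eq:SzegoII2} and $\rho_{\bm{n};\bm{n},k}=0$ is valid but longer than necessary — once $\Phi_{\bm{n};\bm{n}}(1/z)=-\Phi_{\bm{n};\bm{n}}(z)$, the symmetry \eqref{eq:symmetricFunctionals} gives $L_k[\Phi_{\bm{n};\bm{n}}(w)w^{-n_k}]=-L_k[\Phi_{\bm{n};\bm{n}}(w)w^{n_k}]=0$ directly (the last equality by \eqref{eq:moprlII}, since $-n_k$ lies in the orthogonality range), which already contradicts \eqref{eq:norm1} and the normality of $(\bm{n}+\bm{e}_k;\bm{n})$ — exactly the quantity the paper's proof computes.
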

\begin{proof}
    The Laurent polynomial $f_{\bm{n}}(z)=\Phi_{\bm{n};\bm{n}}(z) + \Phi_{\bm{n};\bm{n}}(1/z)$ lies in $\operatorname{span}(z^{-|\bm{n}|},\ldots,z^{|\bm{n}|})$ and satisfies $f_{\bm{n}}(z) = f_{\bm{n}}(1/z)$. Therefore (see, e.g., ~\cite[Lem 13.1.4]{OPUC2}) there exists some polynomial $Q_{\bm{n}}(z)$ of degree at most $|\bm{n}|$ such that $Q_{\bm{n}}(z+z^{-1})= \Phi_{\bm{n};\bm{n}}(z) + \Phi_{\bm{n};\bm{n}}(1/z)$.

    Since each $L_j[w^k] = L_j[w^{-k}]$, 
    we are in the setting of Proposition~\ref{rem:Rsymmetry}, so that $\Phi_{\bm{n};\bm{n}}(1/z)=\Phi_{\bm{n};\bm{n}}^*(z)$. 
    Then 
    \begin{align}
    \label{eq:proofInner1}
     M_j[Q_{\bm{n}}(x)x^k]& =  L_j[ Q_{\bm{n}}(w+w^{-1})(w+w^{-1})^k]
    \\
    \label{eq:proofInner2}
    &= 
    L_j[ \Phi_{\bm{n};\bm{n}}(w)(w+w^{-1})^k] 
    +L_j[\Phi^*_{\bm{n};\bm{n}}(w)(w+w^{-1})^k] 
    \\
    \label{eq:proofInner3}
    & 
    =0, \qquad k = 0,\dots,n_j-1,
    \end{align}
    by ~\eqref{eq:moprlII}, and~\eqref{eq:moprlII*}.
    Hence $Q_{\bm{n}}$ satisfies the type II orthogonality conditions~\eqref{eq:MOPRLIIreal} for $\bm{M}$ at the location $\bm{n}$. 

    Now let us consider~\eqref{eq:proofInner1}--\eqref{eq:proofInner2} with $k=n_j$. We get 
    \begin{align}
     M_j[Q_{\bm{n}}(x)x^{n_j}]
    & 
    =L_j[ \Phi_{\bm{n};\bm{n}}(w) w^{-n_j}] 
    +L_j[\Phi_{\bm{n};\bm{n}}(1/w) w^{n_j}] 
    \\&\label{eq:computation for a}
    = 2L_j[ \Phi_{\bm{n};\bm{n}}(w) w^{-n_j}] ,
    \\&
    \label{eq:lastQnorm}
    = \frac{2\det T_{\bm{n}+\bm{e}_j;\bm{n}}}{\det T_{\bm{n};\bm{n}}},
    \end{align}
    where on the last two steps we used symmetry~\eqref{eq:symmetricFunctionals} and then~\eqref{eq:norm1}. By Proposition~\ref{prop:normality}{(i)} and normality of $(\bm{n}+\bm{e}_j;\bm{n})$, we obtain that the last expression is non-zero.
    Now we are in the position to apply the criterion  
    \cite[Thm 2.19]{KVChristoffel} 
    to conclude that $\bm{M}$ is perfect.  
    Note that the $z^{|\bm{n}|}$ coefficient of $Q_{\bm{n}}$ is equal to $1+\alpha_{\bm{n};\bm{n}}$. This proves that $1+\alpha_{\bm{n};\bm{n}}\ne 0$ and that ~\eqref{eq:SzegoPolynomialRelation} holds.  

    Finally, applying the same argument to $f(z)=\Phi_{\bm{n};\bm{n}-\bm{e}_j}(z) + \Phi_{\bm{n};\bm{n}-\bm{e}_j}(1/z)$ (note  that $(\bm{n};\bm{n}-\bm{e}_j)= ((\bm{n}-\bm{e}_j)+\bm{e}_j;\bm{n}-\bm{e}_j)$ is normal by assumption), one can see that ~\eqref{eq:SzegoPolynomialRelation} holds by repeating the argument in~\eqref{eq:proofInner1}--\eqref{eq:proofInner3} and noting that the $z^{|\bm{n}|}$ coefficient of $f$ is 1.
\end{proof}

In the next result we establish the generalization of the Geronimus relations that connect Verblunsky coefficients of $\mu$ and the Jacobi coefficients of $\gamma$, compare with the original~\cite{GeronimusRelations} Geronimus relations~\eqref{eq:GerIntro1}--\eqref{eq:GerIntro2}. 

\begin{thm}\label{thm:Geronimus}
    In the setting of the previous theorem, we have
    \begin{alignat}{1}
    \label{eq:Geronimus1}
        a_{\bm{n},j} & =
        \frac{(1+\alpha_{\bm{n}-\bm{e}_j;\bm{n}-\bm{e}_j})(1-\alpha_{\bm{n}-\bm{e}_j;\bm{n}}^2
        )\rho_{\bm{n};\bm{n},j} 
        }{1+\alpha_{\bm{n};\bm{n}}},
        \\
        \label{eq:Geronimus2}
        b_{\bm{n},j} & =
        \sum_{\ell=1}^r \rho_{\bm{n};\bm{n},\ell}  \gamma_{\bm{n};\bm{n}}^{\ell j}+
        \alpha_{\bm{n};\bm{n}-\bm{e}_j} - \alpha_{\bm{n}+\bm{e}_j;\bm{n}}
        - \alpha_{\bm{n};\bm{n}} \alpha_{\bm{n}-\bm{e}_j;\bm{n}}
        - \alpha_{\bm{n};\bm{n}} \alpha_{\bm{n}+\bm{e}_j;\bm{n}} .
    \end{alignat}
\end{thm}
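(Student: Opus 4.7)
The plan is to derive \eqref{eq:Geronimus1} and \eqref{eq:Geronimus2} separately, starting in each case from the nearest-neighbour recurrence \eqref{eq:one nnr} for $P_{\bm n}$ and invoking one of the two Szeg\H{o} representations in Theorem~\ref{thm:szego mapping type II}. For $a_{\bm n,j}$, I will apply the functional $M_j[\,\cdot\,x^{n_j-1}]$ to \eqref{eq:one nnr} with $k=j$; the orthogonality ranges of $P_{\bm n+\bm e_j}$, $P_{\bm n}$, and $P_{\bm n-\bm e_k}$ for $k\neq j$ kill every summand except $a_{\bm n,j}$, yielding $a_{\bm n,j}=M_j[P_{\bm n}(x)x^{n_j}]/M_j[P_{\bm n-\bm e_j}(x)x^{n_j-1}]$. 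Each $M_j$-pairing will be rewritten using the Szeg\H{o} map together with $(1+\alpha_{\bm n;\bm n})P_{\bm n}(z+z^{-1})=\Phi_{\bm n;\bm n}(z)+\Phi^*_{\bm n;\bm n}(z)$: expanding $(w+w^{-1})^{n_j}$ binomially and invoking the orthogonality of $\Phi_{\bm n;\bm n}$ and $\Phi^*_{\bm n;\bm n}$ collapses the sum to the two extremal contributions $L_j[\Phi_{\bm n;\bm n}(w)w^{-n_j}]$ and $L_j[\Phi^*_{\bm n;\bm n}(w)w^{n_j}]$, which coincide by the symmetry $\Phi^*_{\bm n;\bm n}(z)=\Phi_{\bm n;\bm n}(1/z)$ from Proposition~\ref{rem:Rsymmetry}. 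The resulting ratio will then be factored through the intermediate pairing $L_j[\Phi_{\bm n-\bm e_j;\bm n}(w)w^{-(n_j-1)}]$: the first ratio is $\rho_{\bm n;\bm n,j}$ by \eqref{eq:rho}, and the second is $1-\alpha_{\bm n-\bm e_j;\bm n}\beta_{\bm n;\bm n-\bm e_j}=1-\alpha_{\bm n-\bm e_j;\bm n}^2$ by \eqref{eq:one minus alphabeta} combined with $\beta_{\bm n;\bm n-\bm e_j}=\alpha_{\bm n-\bm e_j;\bm n}$ from Proposition~\ref{rem:Rsymmetry}. Assembling the factors gives \eqref{eq:Geronimus1}.

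For $b_{\bm n,j}$ I will instead compare the coefficients of $x^{|\bm n|}$ on the two sides of \eqref{eq:one nnr}: writing $P_{\bm k}(x)=x^{|\bm k|}+p_{\bm k}^{(|\bm k|-1)}x^{|\bm k|-1}+\ldots$, this immediately yields $b_{\bm n,j}=p_{\bm n}^{(|\bm n|-1)}-p_{\bm n+\bm e_j}^{(|\bm n|)}$. I will read off these sub-leading coefficients from the alternative Szeg\H{o} representation \eqref{eq:SzegoPolynomialRelation2}, $P_{\bm k}(z+z^{-1})=\Phi_{\bm k;\bm k-\bm e_j}(z)+\Phi_{\bm k;\bm k-\bm e_j}(1/z)$, applied at $\bm k=\bm n$ and $\bm k=\bm n+\bm e_j$; the $\Phi(1/z)$ piece contributes the bottom coefficient $\alpha$ of $\Phi$, so that
\[
p_{\bm n}^{(|\bm n|-1)}=c^{(|\bm n|-1)}(\bm n;\bm n-\bm e_j)+\alpha_{\bm n;\bm n-\bm e_j},\qquad p_{\bm n+\bm e_j}^{(|\bm n|)}=c^{(|\bm n|)}(\bm n+\bm e_j;\bm n)+\alpha_{\bm n+\bm e_j;\bm n},
\]
where $c^{(q)}(\bm n;\bm m)$ denotes the $z^q$-coefficient of $\Phi_{\bm n;\bm m}$. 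Both $c$-terms will be reduced to the common anchor $c^{(|\bm n|-1)}(\bm n;\bm n)$: matching the $z^{|\bm n|-1}$-coefficients in \eqref{eq:SzegoII4} (with $\bm m=\bm n$, $k=j$) gives $c^{(|\bm n|-1)}(\bm n;\bm n-\bm e_j)=c^{(|\bm n|-1)}(\bm n;\bm n)-\alpha_{\bm n;\bm n}\beta_{\bm n;\bm n-\bm e_j}$, while extracting the $z^{|\bm n|}$-coefficient of \eqref{eq:SzegoII9} (with $\bm m=\bm n$, $k=j$, using $\beta_{\bm n;\bm n}=\alpha_{\bm n;\bm n}$ and that each $\Phi_{\bm n-\bm e_\ell;\bm n}$ is monic) gives $c^{(|\bm n|)}(\bm n+\bm e_j;\bm n)=c^{(|\bm n|-1)}(\bm n;\bm n)+\alpha_{\bm n;\bm n}\alpha_{\bm n+\bm e_j;\bm n}-\sum_\ell\rho_{\bm n;\bm n,\ell}\gamma^{\ell j}_{\bm n;\bm n}$. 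The anchor terms then cancel in the subtraction $p_{\bm n}^{(|\bm n|-1)}-p_{\bm n+\bm e_j}^{(|\bm n|)}$, and what remains is exactly \eqref{eq:Geronimus2} once $\beta_{\bm n;\bm n-\bm e_j}=\alpha_{\bm n-\bm e_j;\bm n}$ is invoked.

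The $a$-identity will reduce to essentially routine orthogonality bookkeeping once the Szeg\H{o} representation is substituted. The main obstacle will lie in the $b$-identity: one must select Szeg\H{o} recurrences for which both the $z^{|\bm n|-1}$- and the $z^{|\bm n|}$-coefficient comparisons feed into the same anchor quantity $c^{(|\bm n|-1)}(\bm n;\bm n)$, so that it cancels in the subtraction; alternative choices introduce $\Phi^*$-cross-terms that do not simplify into the stated closed form.
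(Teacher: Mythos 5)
Your proposal is correct and follows essentially the same route as the paper's own proof: formula \eqref{eq:Geronimus1} via $a_{\bm{n},j}=M_j[P_{\bm{n}}(x)x^{n_j}]/M_j[P_{\bm{n}-\bm{e}_j}(x)x^{n_j-1}]$, the collapse $M_j[Q_{\bm{n}}(x)x^{n_j}]=2L_j[\Phi_{\bm{n};\bm{n}}(w)w^{-n_j}]$, and factoring through the intermediate pairing to invoke \eqref{eq:rho}, \eqref{eq:one minus alphabeta}, and the symmetry \eqref{eq:szego symmetry}; and formula \eqref{eq:Geronimus2} via $b_{\bm{n},j}=k_{|\bm{n}|-1}(P_{\bm{n}})-k_{|\bm{n}|}(P_{\bm{n}+\bm{e}_j})$, the representation \eqref{eq:SzegoPolynomialRelation2}, and coefficient extraction from \eqref{eq:SzegoII4} and \eqref{eq:SzegoII9}. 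The only cosmetic difference is that you cancel a common anchor coefficient $c^{(|\bm{n}|-1)}(\bm{n};\bm{n})$ where the paper directly evaluates the difference $k_{|\bm{n}|-1}(\Phi_{\bm{n};\bm{n}})-k_{|\bm{n}|}(\Phi_{\bm{n}+\bm{e}_j;\bm{n}})$, which is the same computation.
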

\begin{rem}
    When $\alpha_{\bm{n};\bm{n}} \neq 0$ we can eliminate $\gamma_{\bm{n};\bm{n}}^{\ell j}$ to get
    \begin{equation}
        \label{eq:Geronimus2.1}
        b_{\bm{n},j}  =
        \frac{1}{\alpha_{\bm{n};\bm{n}}}\bigg(-\alpha_{\bm{n}+\bm{e}_j;\bm{n}}+\sum_{\ell=1}^r \alpha_{\bm{n}+\bm{e}_{\ell};\bm{n}}\rho_{\bm{n};\bm{n},\ell}\bigg)  +
        \alpha_{\bm{n},\bm{n}-\bm{e}_j} - \alpha_{\bm{n}+\bm{e}_j,\bm{n}}
        - \alpha_{\bm{n};\bm{n}} \alpha_{\bm{n}-\bm{e}_j;\bm{n}} .
    \end{equation}
\end{rem}
\begin{proof}
    From~\eqref{eq:one nnr} and~\eqref{eq:MOPRLIIreal}, and then from~\eqref{eq:type 1 recurrence relation} and~\eqref{eq:MOPRLIreal}--\eqref{eq:normalization type I moprl}, one gets
    \begin{align}
    \label{eq:avsM}
        a_{\bm{n},j} 
        &= \frac{ M_j[P_{\bm{n}}(x)x^{n_j} ]}{ M_j[P_{\bm{n}-\bm{e}_j}(x) x^{n_j - 1}] },
        \\
    \label{eq:bvsM}
        b_{\bm{n},j}
        & = k_{|\bm{n}|-1}(P_{\bm{n}}) - k_{|\bm{n}|}(P_{\bm{n}+\bm{e}_j}),
    \end{align}
    where $k_j(P)$ denotes the $z^j$ coefficient of a polynomial $P$.

    {Using ~\eqref{eq:avsM}, \eqref{eq:SzegoPolynomialRelation}, and \eqref{eq:computation for a}, we obtain
    \begin{equation}
        a_{\bm{n},j} = \frac{1+\alpha_{\bm{n}-\bm{e}_j;\bm{n}-\bm{e}_j}}{1+\alpha_{\bm{n};\bm{n}}}\frac{2L_j[\Phi_{\bm{n};\bm{n}}(z)z^{-n_j}]}{L_j[\Phi_{\bm{n}-\bm{e}_j;\bm{n}}(z)z^{-n_j+1}]}\frac{L_j[\Phi_{\bm{n}-\bm{e}_j;\bm{n}}(z)z^{-n_j+1}]}{2L_j[\Phi_{\bm{n}-\bm{e}_j;\bm{n}-\bm{e}_j}(z)z^{-n_j+1}]}.
    \end{equation}
    Now \eqref{eq:Geronimus1} follows from \eqref{eq:rho} and \eqref{eq:one minus alphabeta}, and the symmetry \eqref{eq:szego symmetry}.
    }

    For the second relation we follow the trick from the proof of~\cite[Thm 13.1.7]{OPUC2}. From~\eqref{eq:bvsM} and \eqref{eq:SzegoPolynomialRelation2}, we get
    \begin{align}
        b_{\bm{n},j} & =  (k_{|\bm{n}|-1}(\Phi_{\bm{n};\bm{n}-\bm{e}_j}) +\alpha_{\bm{n},\bm{n}-\bm{e}_j}) - (k_{|\bm{n}|}(\Phi_{\bm{n}+\bm{e}_j,\bm{n}})+\alpha_{\bm{n}+\bm{e}_j,\bm{n}})
        \\
        & =
        (k_{|\bm{n}|-1}(\Phi_{\bm{n};\bm{n}}) - k_{|\bm{n}|}(\Phi_{\bm{n}+\bm{e}_j,\bm{n}}) )
        +(\alpha_{\bm{n},\bm{n}-\bm{e}_j} - \alpha_{\bm{n}+\bm{e}_j,\bm{n}})
        - \alpha_{\bm{n};\bm{n}} \beta_{\bm{n};\bm{n}-\bm{e}_j},
    \end{align}
    where we used~\eqref{eq:SzegoII4} in the last line. The result now follows since we can compute $k_{|\bm{n}|-1}(\Phi_{\bm{n};\bm{n}}) - k_{|\bm{n}|}(\Phi_{\bm{n}+\bm{e}_j;\bm{n}})$ directly from \eqref{eq:SzegoII9} to get \eqref{eq:Geronimus2}, and then use symmetry \eqref{eq:szego symmetry}. For the remark, we instead compute $k_{|\bm{n}|-1}(\Phi_{\bm{n};\bm{n}}) - k_{|\bm{n}|}(\Phi_{\bm{n}+\bm{e}_j;\bm{n}})$ through \eqref{eq:generalized three term}.


\end{proof}

\bibsection

\begin{biblist}[\small]

\bib{Aptekarev}{article}{
   author={Aptekarev, A.I.},
   title={Multiple orthogonal polynomials},
   journal={J. Comput. Appl. Math.},
   volume={99},
   year={1998},
   pages={423--447},
}

\bib{Baker}{book}{
    AUTHOR = {Baker, G.A.},
    AUTHOR = {Graves-Morris, P.},
     TITLE = {Pad\'e{} approximants},
    SERIES = {Encyclopedia of Mathematics and its Applications},
    VOLUME = {59},
   EDITION = {Second},
 PUBLISHER = {Cambridge University Press, Cambridge},
      YEAR = {1996},
     PAGES = {xiv+746},
}

\bib{BCVA}{article}{
   author={Beckermann, B.},
   author={Coussement, J.},
   author={Van Assche, W.},
   title={Multiple Wilson and Jacobi-Pi\~neiro polynomials},
   journal={J. Approx. Theory},
   volume={132},
   date={2005},
   number={2},
   pages={155--181}
}

\bib{Bultheel}{book}{
    AUTHOR = {Bultheel, A.},
     TITLE = {Laurent series and their {P}ad\'e{} approximations},
    SERIES = {Operator Theory: Advances and Applications},
    VOLUME = {27},
 PUBLISHER = {Birkh\"auser Verlag, Basel},
      YEAR = {1987},
     PAGES = {xii+274},
}

\bib{Chihara}{book}{
   author={Chihara, T.S.},
   title={An Introduction to Orthogonal Polynomials},
   isbn={9780486479293},
   series={Mathematics and Its Applications},
   volume={13},
   publisher={Gordon and Breach Science Publishers, Inc.},
   year={1978},
}

\bib{MOPUC2}{article}{
   author={Cruz-Barroso, R.},
   author={Díaz Mendoza, C.},
   author={Orive, R.},
   title={Multiple orthogonal polynomials on the unit circle. Normality and recurrence relations},
   journal={J. Comput. Appl. Math.},
   volume={284},
   year={2015},
   pages={115--132},
}


\bib{DerSim18}{article}{
    AUTHOR = {Derevyagin, M.}
    AUTHOR = {Simanek, B.},
     TITLE = {Asymptotics for polynomials orthogonal in an indefinite
              metric},
   JOURNAL = {J. Math. Anal. Appl.},
  FJOURNAL = {Journal of Mathematical Analysis and Applications},
    VOLUME = {460},
      YEAR = {2018},
    NUMBER = {2},
     PAGES = {777--793},
}

\bib{GeronimusRelations}{article}{
    AUTHOR = {Geronimus, Ya.L.},
     TITLE = {On the trigonometric moment problem},
   JOURNAL = {Ann. of Math. (2)},
  FJOURNAL = {Annals of Mathematics. Second Series},
    VOLUME = {47},
      YEAR = {1946},
     PAGES = {742--761},
  MRNUMBER = {18265},
}

\bib{GeronimusBook}{book}{
    AUTHOR = {Geronimus, Ya.L.},
     TITLE = {Teoriya ortogonal\cprime nyh mnogo\v clenov},
 PUBLISHER = {Gosudarstv. Izdat. Tehn.-Teor. Lit., Moscow-Leningrad},
      YEAR = {1950},
     PAGES = {164},
}

\bib{HueMan}{article}{
    AUTHOR={Huertas, E.J.},
    AUTHOR={Ma\~{n}as, M.},
     TITLE = {Mixed-type multiple orthogonal {L}aurent polynomials on the
              unit circle},
   JOURNAL = {J. Comput. Appl. Math.},
  FJOURNAL = {Journal of Computational and Applied Mathematics},
    VOLUME = {475},
      YEAR = {2026},
     PAGES = {Paper No. 117037},
}

\bib{Ismail}{book}{
   author={Ismail, M.E.H.},
   title={Classical and Quantum Orthogonal
Polynomials in One Variable},
   isbn={9780521782012},
   series={Encyclopedia of Mathematics and its Applications},
   Volume={98},
   publisher={Cambridge University Press},
   year={2005},
}

\bib{JNT}{article}{
    author={Jones, W.B.},
    author={Njåstadt, O.},
    author={Thron, W.J.},
    title={Moment Theory, Orthogonal Polynomials, Quadrature, and Continued Fractions Associated with the unit Circle},
    journal={Bulletin of the London Mathematical Society},
    volume={21},
    number={2},
    year={1989},
    pages={113–152}
}

\bib{KNik}{article}{
    AUTHOR={Kozhan, R.},
    TITLE={Nikishin Systems on the Unit Circle},
  journal={},
   volume={},
   date={},
   number={},
   pages={under submission, arXiv:2410.20813},
}

\bib{KVMLOPUC1}{article}{
    AUTHOR={Kozhan, R.},
    AUTHOR={Vaktnäs, M.},
    TITLE={Angelesco and AT Systems on the Unit Circle},
    journal={},
    volume={},
    date={},
    number={},
    pages={under submission, arXiv:2410.12094},
}

\bib{KVChristoffel}{article}{
   author={Kozhan, R.},
   author={Vaktn\"{a}s, M.},
   title={Christoffel transform and multiple orthogonal polynomials},
      JOURNAL = {J. Comput. Appl. Math.},
    VOLUME = {476},
      YEAR = {2026},
     PAGES = {Paper No. 117121},
}

\bib{KVMOPUC}{article}{
    AUTHOR={Kozhan, R.},
    AUTHOR={Vaktnäs, M.},
    TITLE={Szeg\H{o} recurrence for multiple orthogonal polynomials on the unit circle},
    JOURNAL={Proc. Amer. Math. Soc.},
    VOLUME={152},
    NUMBER={11},
    YEAR={2024},
    PAGES={2983-2997},
    ISSN={1088-6826,0002-9939},
}

\bib{KVInterlacing}{article}{
    AUTHOR={Kozhan, R.},
    AUTHOR={Vaktnäs, M.},
    TITLE={Zeros of multiple orthogonal polynomials: location and interlacing},
    journal={Bull. of London Math. Soc.},
    volume={},
    date={},
    number={},
    pages={to appear, arXiv:2503.15122},
}

\bib{Kui}{article}{
   AUTHOR = {Kuijlaars, A.B.J.},
     TITLE = {Multiple orthogonal polynomial ensembles},
    JOURNAL = {Recent trends in orthogonal polynomials and approximation
              theory, Contemp. Math., Amer. Math. Soc., Providence, RI},
    VOLUME = {507},
     PAGES = {155--176},
      YEAR = {2010},
      ISBN = {978-0-8218-4803-6},
}

\bib{Applications}{article}{
   author={Martínez-Finkelshtein, A.},
   author={Van Assche, W.},
   title={WHAT IS...A Multiple Orthogonal Polynomial?},
   journal={Not. Am. Math. Soc.},
   volume={63},
   year={2016},
   pages={1029--1031},
}

\bib{MOPUC1}{article}{
   author={Mínguez Ceniceros, J.},
   author={Van Assche, W.},
   title={Multiple orthogonal polynomials on the unit circle},
   journal={Constr. Approx.},
   volume={28},
   year={2008},
   pages={173--197},
}

\bib{Nikishin}{book}{
    AUTHOR = {Nikishin, E.M.},
    AUTHOR = {Sorokin, V.N.},
     TITLE = {Rational approximations and orthogonality},
    SERIES = {Translations of Mathematical Monographs},
    VOLUME = {92},
      NOTE = {Translated from the Russian by Ralph P. Boas},
 PUBLISHER = {American Mathematical Society, Providence, RI},
      YEAR = {1991},
     PAGES = {viii+221},
      ISBN = {0-8218-4545-4},
}

\bib{PeherstorferSteinbauer}{article}{
    author={Peherstorfer, F.},
    author={Steinbauer, R.},
    title={Characterization of orthogonal polynomials with respect to a functional},
    journal={Journal of Computational and Applied Mathematics},
    volume={65},
    year={1995},
    pages={339-355},
}

\bib{OPUC1}{book}{
   author={Simon, B.},
   title={Orthogonal Polynomials on the Unit Circle, Part 1: Classical Theory},
   isbn={0-8218-3446-0},
   series={Colloquium Lectures},
   Volume={54},
   publisher={American Mathematical Society},
   year={2004},
}

\bib{OPUC2}{book}{
   author={Simon, B.},
   title={Orthogonal Polynomials on the Unit Circle, Part 2: Spectral Theory},
   isbn={978-0-8218-4864-7},
   series={Colloquium Lectures},
   Volume={54},
   publisher={American Mathematical Society},
   year={2005},
}

\bib{SimonL2}{book}{
   author={Simon, B.},
   title={Szeg\H{o}'s Theorem and Its Descendants: Spectral Theory for $L^2$ Perturbations of Orthogonal Polynomials},
   isbn={9780691147048},
   series={Porter Lectures},
   publisher={Princeton University Press},
   year={2011},
}

\bib{SzegoBook}{book}{
    AUTHOR = {Szeg\H o, G.},
     TITLE = {Orthogonal polynomials},
    SERIES = {American Mathematical Society Colloquium Publications},
    VOLUME = {Vol. XXIII, 4th ed},
 PUBLISHER = {American Mathematical Society, Providence, RI},
      YEAR = {1975},
     PAGES = {xiii+432},
}

\bib{Szego}{article}{
    author={Szeg\H{o}, G.},
    title={Über die Entwickelung einer analytischen Funktion nach den Polynomen eines Orthogonalsystems},
    issn={0025-5831},
    journal={Mathematische Annalen},
    volume={82},
    year={1921},
    pages={188-212},
}
	
\bib{NNRR}{article}{
   author={Van Assche, W.},
   title={Nearest neighbor recurrence relations for multiple
orthogonal polynomials},
   journal={J. Approx. Theory},
   volume={163},
   year={2011},
   pages={1427--1448},
}

\end{biblist}


\end{document}